\date{}
\numberwithin{equation}{section}
\def\nfrac#1#2{{\textstyle\frac{#1}{#2}}}
\def\({\bigl(}
\def\){\bigr)}
\newtheorem{thm}{Theorem}[section]
\newtheorem{cor}{Corollary}[section]
\newtheorem{lemma}{Lemma}[section]
\newtheorem{conj}{Conjecture}
\def\dfrac#1#2{\lower0.15ex\hbox{\large$\frac{#1}{#2}$}}
\def\({\bigl(}
\def\){\bigr)}
\def\sumprime{\mbox{$\vphantom{\displaystyle\sum}$}'}
\def\sumppd{\mathop{\sum\nolimits'}\limits}
\def\dmax{d_{\mathrm{max}}}
\def\A{\mathcal{A}}
\def\M{\mathcal{M}}
\let\eps=\varepsilon
\def\Deltait{\mathit{\Delta}}
\def\Lambdait{\mathit{\Lambda}}
\def\dvec{\boldsymbol{d}}
\def\mvec{\boldsymbol{m}}
\def\pvec{\boldsymbol{p}}
\def\zvec{\boldsymbol{z}}
\def\betavec{\boldsymbol{\beta}}
\def\OO{\widetilde{O}}
\def\E{\operatorname{\mathbb{E}}}
\def\Var{\operatorname{Var}}
\def\Prob{\operatorname{Prob}}
\def\Reals{{\mathbb{R}}}
\def\abs#1{\lvert#1\rvert} \let\card=\abs
\def\Bin{\operatorname{Bin}}
\def\PB{\operatorname{PB}}
\def\nicebreak{\vskip 0pt plus 50pt\penalty-300\vskip 0pt plus -50pt }
\begin{document}

\title{Counting loopy graphs with given degrees}

\author{
Catherine~Greenhill\\
\small School of Mathematics and Statistics\\[-0.9ex]
\small University of New South Wales\\[-0.9ex]
\small Sydney, Australia 2052\\[-0.3ex]
\small\texttt{csg@unsw.edu.au}
\and
Brendan~D.~McKay\vrule width0pt height2ex\thanks
 {Research supported by the Australian Research Council.}\\
\small Research School of Computer Science\\[-0.9ex]
\small Australian National University\\[-0.9ex]
\small Canberra ACT 0200, Australia\\[-0.3ex]
\small\texttt{bdm@cs.anu.edu.au}
}

\maketitle

\begin{abstract}
Let $\dvec=(d_1,d_2,\ldots, d_n)$ be a vector of nonnegative
integers.  We study the number of symmetric 0-1 matrices
whose row sum vector equals~$\dvec$.
While previous work has focussed on the case of zero diagonal, 
we allow diagonal entries to equal~1.    Specifically, for $D\in\{ 1,2\}$
we define the set $\mathcal{G}_D(\dvec)$ of all $n\times n$ symmetric
0-1 matrices with row sums given by $\dvec$, where each diagonal entry is
multiplied by $D$ when forming the row sum.
We obtain asymptotically precise formulae for $|\mathcal{G}_D(\dvec)|$
in the sparse range (where, roughly, the maximum
row sum is $o(n^{1/2})$), and in the dense range (where, roughly,
the average row sum is proportional to~$n$ and the row
sums do not vary greatly).
The case $D=1$ corresponds to enumeration by the usual row sum of matrices.
The case $D=2$ corresponds to enumeration by degree sequence
of undirected graphs with loops but no repeated edges, due to the convention
that a loop contributes 2 to the degree of its incident vertex.
We also analyse the distribution of the trace of a random element of 
$\mathcal{G}_D(\dvec)$, and prove that it is well approximated by a 
binomial distribution in the dense
range, and by a Poisson binomial distribution in the sparse range.
\end{abstract}

\section{Introduction}\label{s:intro}

Let $\dvec=(d_1,d_2,\ldots,d_n)$ be a vector of nonnegative
integers. Define
$G(\dvec)$ to be the number of $n\times n$ symmetric matrices
over $\{ 0,1\}$ with zero diagonal, such that
row $j$ sums to $d_j$, for $j=1,\ldots,n$.

The quantity $G(\dvec)$ has been well studied, as cited
below.  In this paper we consider the case where the
diagonal need not be zero. 
For $D\in\{ 1,2\}$
define $\mathcal{G}_D(\dvec)$ to be the set of $n\times n$ symmetric
matrices $A=(a_{jk})$ over $\{ 0,1\}$ such that
\[  D a_{jj} + 
   \!\!\sum_{1\leq k\leq n,\, k\ne j} \!\! a_{jk}  = d_j \quad 
                      \text{ for }\, j = 1,\ldots, n.
\]
We wish to find an asymptotic formula for
\[ G_D(\dvec) = \card{\mathcal{G}_D(\dvec)}. \]
The case of $D=1$ corresponds to enumeration by row sum of symmetric
0-1 matrices.
If we interpret $A$ as the adjacency matrix of a simple undirected
graph with loops, then the case of $D=2$ corresponds to enumeration
by degree sequence of simple undirected graphs with loops. 
Such graphs arise in various applications including the
study of graph homomorphisms~\cite{GHRY} and sign patterns~\cite{CHMM}.

Throughout the paper we will refer to a nonzero entry on the
diagonal of a 0-1 matrix as a \emph{loop}.
For $\ell  = 0, 1,\ldots, n$, let
$\mathcal{G}_D(\dvec,\ell)$ be the set of matrices in $\mathcal{G}_D(\dvec)$
with exactly $\ell$ loops (that is, with trace $\ell$), 
and let $G_D(\dvec,\ell)= \card{\mathcal{G}_D(\dvec,\ell)}$.  
Clearly we have $G_D(\dvec,0)=G(\dvec)$  and 
$G_D(\dvec) = \sum_{\ell=0}^n G_D(\dvec,\ell)$.
We also note here that $G_1(\dvec,\ell)=0$ unless
$\sum_{j=1}^n d_j$ has the same parity as $\ell$,
and $G_2(\dvec,\ell)=0$ unless $\sum_{j=1}^n d_j$ is even.

When $d_j=d$ for $j=1,\ldots, n$, we write $G_D(\dvec)=G_D(n,d)$
and refer to this as the regular case.

We will use the following parameters frequently:
\def\fixwid#1{\hbox to 5em{$\displaystyle#1$\hss}}
\begin{align*}
  S &= \fixwid{ \sum_{j=1}^n d_j,} &
  d &= \fixwid{\frac{S}{n},} \\
  \lambda &= \fixwid{\frac{d}{n-1},} &
  \dmax &=\max_j \, d_j, \\
   R &= \sum_{j=1}^n \, (d_j - d)^2, &
   S_r &= \sum_{j=1}^n\, [d_j]_r\quad(r=2,3),
\end{align*}
where $[a]_r = a(a-1)\cdots (a-r+1)$ denotes the falling factorial.

Throughout the paper, the asymptotic notation $O(f(m))$ refers
to the passage of the variable~$m$ to infinity. (Usually $m=n$ or $m=S$.)
In the dense setting we also use a modified notation $\OO(f(n))$,
which is to be taken as a shorthand for $O\(f(n)n^{c\eps}\)$
with $c$ a numerical constant (perhaps a different constant for each
occurrence).
We write $\Omega(g(n))$ to indicate any function which
is greater than $C g(n)$ for some constant $C>0$ and sufficiently
large~$n$.

\medskip
It appears that there is very little prior research on $G_D(\dvec)$.
The most general result, by Bender and Canfield, dates from 1978.

\begin{thm}[\cite{BC}]
\label{BC78}
Suppose that $1\leq \dmax = O(1)$. 
Then 
\begin{align*}
G_1(\dvec) &= \frac{1}{\sqrt{2}}\,
  \biggl(\frac{S}{e}\biggr)^{\!S/2} \biggl(\,\prod_{j=1}^n d_j!\biggr)^{\!-1}
   \exp\biggl(\sqrt{S} - \frac{1}{4} - \frac{S_2}{S}
   - \frac{S_2^2}{4S^2}  + o(1) \biggr)
\end{align*}
uniformly as $S\to\infty$.
\end{thm}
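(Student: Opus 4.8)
The plan is to peel off the diagonal, reduce to the loop-free enumeration problem, and then evaluate a single sum over the number of loops by Laplace's method. The reduction is exact: if $A\in\mathcal G_1(\dvec)$ has its nonzero diagonal entries indexed by $L\subseteq\{1,\dots,n\}$, then the off-diagonal part of $A$ is a symmetric $0$-$1$ matrix with zero diagonal and row-sum vector $\dvec-\boldsymbol 1_L$, and this correspondence is a bijection. Writing $\ell=\card L$ and grouping by the size of $L$, I get
\[
   G_1(\dvec)=\sum_{L}G(\dvec-\boldsymbol 1_L)
      =\sum_{\ell}\ \sum_{\card L=\ell}G(\dvec-\boldsymbol 1_L),
\]
where $L$ ranges over subsets with $d_j\ge1$ for $j\in L$; note that $G(\dvec-\boldsymbol 1_L)=0$ unless $S-\ell$ is even, which will ultimately supply a factor of $\tfrac12$.

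Next I would insert the standard sparse asymptotics for the loop-free count. For a bounded degree sequence $\mvec$ with even sum $M=\sum_j m_j$, the configuration model gives
\[
   G(\mvec)\sim\frac{(M-1)!!}{\prod_j m_j!}
      \exp\!\Bigl(-\tfrac{M_2}{2M}-\tfrac{M_2^2}{4M^2}\Bigr),
\]
the exponential being the Poisson probability that a uniform random pairing of $M$ points produces neither a loop nor a repeated edge. Applying this with $\mvec=\dvec-\boldsymbol 1_L$, so that $M=S-\ell$ and $\prod_j m_j!=\prod_j d_j!\big/\prod_{j\in L}d_j$, turns the inner sum into an elementary symmetric polynomial, $\sum_{\card L=\ell}\prod_{j\in L}d_j=e_\ell(\dvec)$. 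Since the sum will concentrate at $\ell=O(\sqrt S)$ while $\dmax=O(1)$, one has $M=S+O(\sqrt S)$ and $M_2=S_2+O(\sqrt S)$ uniformly over the relevant $L$, so the Poisson correction may be replaced by the $\ell$-free factor $\exp(-\tfrac{S_2}{2S}-\tfrac{S_2^2}{4S^2})$ at the cost of a $1+o(1)$ error.

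The core of the argument is then the asymptotic evaluation of $\sum_\ell (S-\ell-1)!!\,e_\ell(\dvec)$. I would use two expansions valid for $\ell=O(\sqrt S)$: from $\sum_\ell e_\ell(\dvec)x^\ell=\prod_j(1+d_jx)=\exp(Sx-\tfrac12 p_2x^2+O(x^3))$ with $p_2=\sum_j d_j^2=S_2+S$, a saddle-point coefficient extraction at $x\approx\ell/S$ gives $e_\ell(\dvec)\sim\frac{S^\ell}{\ell!}\exp(-\tfrac{p_2\ell^2}{2S^2})$; and the telescoping product for the double factorial gives $(S-\ell-1)!!\sim(S-1)!!\,S^{-\ell/2}\exp(\tfrac{\ell^2}{4S})$. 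Multiplying these, the summand is proportional to $\frac{(\sqrt S)^\ell}{\ell!}\exp\!\bigl(-\tfrac{\ell^2}{4S}-\tfrac{S_2\ell^2}{2S^2}\bigr)$. The Poisson weights $(\sqrt S)^\ell/\ell!$ peak at $\ell\approx\sqrt S$ with width $S^{1/4}$, over which the remaining factor is essentially constant and equal to its value at $\ell^2=S$; summing over the admissible parity class therefore yields $\tfrac12 e^{\sqrt S}\exp(-\tfrac14-\tfrac{S_2}{2S})$. Assembling this with $(S-1)!!\sim\sqrt2\,(S/e)^{S/2}$ and the factor $\exp(-\tfrac{S_2}{2S}-\tfrac{S_2^2}{4S^2})$ from the previous step, the two $S_2/S$ contributions combine to $-S_2/S$, the surviving constant in the exponent is $-\tfrac14$, and the prefactor reduces to $\tfrac1{\sqrt2}(S/e)^{S/2}\big/\prod_j d_j!$, reproducing the claimed formula.

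The hard part is the uniform error control in this Laplace step. The two expansions for $e_\ell$ and for $(S-\ell-1)!!$ are accurate only for $\ell$ not much larger than $\sqrt S$, so I would need a separate crude upper bound to show that the tail $\ell\gg\sqrt S$ contributes negligibly. More delicately, the exact constants $-\tfrac14$ and $-S_2/S$ arise from a cancellation between the $\exp(+\ell^2/4S)$ coming from the double factorial and the $\exp(-p_2\ell^2/2S^2)$ coming from $e_\ell$; both must be tracked to the same order, and one must verify that the value of $\ell^2$ relevant under the Poisson weights (its mean is $S+\sqrt S$, not merely $S$) does not disturb the $O(1)$ terms. Establishing the loop-free input uniformly in $\dvec$, and confirming that its correction factor is insensitive to $L$, are the remaining technical points; none of these is individually deep, but keeping every $O(1)$ contribution under control simultaneously is what makes the computation demanding.
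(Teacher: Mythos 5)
The paper never proves this statement itself --- it is quoted from Bender and Canfield --- but its proof of the sharper Theorem~\ref{sparsetheorem} in Section~\ref{s:sparse} follows exactly your route: decompose $G_1(\dvec)=\sum_{L}G(\dvec-\boldsymbol{1}_L)$ over diagonal supports, insert sparse loop-free asymptotics for each term, and evaluate the resulting sum, which concentrates at $\ell\approx\sqrt{S}$ with the parity restriction supplying the factor $\tfrac12$. Your outline and its key computations are correct --- your Poisson weights $(\sqrt{S}\,)^{\ell}/\ell!$ acting on elementary symmetric polynomials are the bounded-degree specialization of the paper's Poisson binomial machinery with $p_j=a_j/(1+a_j)$, $a_j\approx d_j/\sqrt{S}$ --- and the technical points you flag (tail bounds, the parity cancellation, uniformity of the correction factor) are precisely what the paper's Lemmas~\ref{technical}, \ref{parity} and \ref{small-ell}--\ref{Kexpectation} are designed to supply.
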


Note that $G_1(n,1)$ is the number of involutions on $n$ letters (and also 
the number of Young tableaux with $n$ cells, see~\cite[A000085]{sloane}).
The asymptotic expansion of $G_1(n,1)$ was previously known, 
see~\cite{chowla, moser}.  We found no prior asymptotic
work on $G_2(\dvec)$ at all.

In the case of $D=1$, a graph with $n$ vertices and $\ell$ loops can be
mapped to a graph with $n+1$ vertices and no loops, by
introducing a new vertex and replacing each loop by an edge to
this vertex.   This mapping is bijective and hence
\[
    G_1\((d_1,\ldots,d_n),\ell\) = G\((d_1,\ldots,d_n,\ell)\).
\]
However, this doesn't seem to be of much use in asymptotic enumeration,
since the important values of $\ell$ place the degree sequence
$(d_1,\ldots,d_n,\ell)$ out of range of existing explicit estimates.

\medskip

Our approach to estimating $G_D(\dvec)$ will be to sum over all possible
diagonals using the existing estimates for $G(\dvec)$.
The main estimates we will use are the following two theorems.
The history of previous results on $G(\dvec)$ is summarized
in~\cite{BDM85} and~\cite{MWreg}.

McKay and Wormald~\cite[Theorem 5.2]{MWsparse} proved the following asymptotic
formula for $G(\dvec)$ in the sparse regime.

\begin{thm}[\cite{MWsparse}]\label{sparsenum}
If\/ $1\leq \dmax = o(S^{1/3})$ 
then
\[
  G(\dvec) = \frac{S!}{(S/2)!\,2^{S/2}\prod_{j=1}^n d_j!}\,
    \exp\biggl( -\frac{S_2}{2S}
     - \frac{S_2^2}{4S^2} - \frac{S_2^2S_3}{2S^4} + \frac{S_2^4}{4S^5} +
                \frac{S_3^2}{6S^3}
       + O\biggl(\frac{\dmax^3}{S}\biggr)
   \biggr),
\]
uniformly as $S\to\infty$, with $S$ even. 
\end{thm}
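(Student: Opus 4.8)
The plan is to use the pairing (configuration) model together with the switching method of McKay and Wormald. First I would attach to each vertex $j$ a cell of $d_j$ distinct points, giving $S$ points in total, and consider the set of all perfect matchings (pairings) of these points; there are $P = S!/\bigl((S/2)!\,2^{S/2}\bigr)$ of them (note that $S$ must be even for any to exist). Projecting a pairing to the multigraph in which each matched pair becomes an edge (a within-cell pair becoming a loop) maps exactly $\prod_{j=1}^n d_j!$ pairings onto each simple graph with degree sequence $\dvec$, so that $G(\dvec) = \bigl(\prod_{j=1}^n d_j!\bigr)^{-1} N_0$, where $N_0$ counts the pairings whose projection has no loop and no repeated edge. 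This accounts for the leading factor, and the remaining content of the theorem is the estimate $N_0 = P\exp(\cdots)$ for the probability that a uniformly random pairing is simple.

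Next I would classify pairings by their defects. Since $\dmax = o(S^{1/3})$, triple and higher edges, and configurations carrying three or more simultaneous defects, are rare enough to affect only the error term, so the governing statistics are the number of loops and the number of double edges. Writing $N(\ell,m)$ for the number of pairings with exactly $\ell$ loops and $m$ double edges, I would introduce two switching operations: a loop-switching that deletes one loop by re-wiring it with two ordinary edges, and a double-edge switching that similarly removes one parallel pair. For each operation I would count the number of ways to perform it (the forward count) and to invert it (the backward count) on a pairing with given $(\ell,m)$. The ratio of these counts gives $N(\ell,m)/N(\ell-1,m)$ and $N(\ell,m)/N(\ell,m-1)$; to leading order they equal $\mu_L/\ell$ and $\mu_M/m$ with $\mu_L = S_2/(2S)$ and $\mu_M = S_2^2/(4S^2)$. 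This identifies the defect counts as asymptotically independent Poisson variables and gives $N_0 \approx P\,e^{-\mu_L-\mu_M}$, reproducing the first two terms of the exponent.

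To reach the stated precision I would carry the switching counts to second order in $1/S$. The corrections arise from several sources: the exact forward and backward counts depend weakly on the current $\ell$ and $m$, so the ratios are not perfectly geometric; points already consumed by existing defects must be excluded when counting the edges available to a switching; and the means themselves carry lower-order terms, for example $\E[\text{loops}] = S_2/(2(S-1))$ rather than $S_2/(2S)$. Collecting these contributions, together with the small but non-negligible effect of triple edges, yields the remaining exponential terms $-S_2^2S_3/(2S^4)$, $S_2^4/(4S^5)$ and $S_3^2/(6S^3)$. Solving the resulting coupled difference equations for $N(\ell,m)$ and summing over $(\ell,m)$ then gives $N_0$, and hence $G(\dvec)$.

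The main obstacle is exactly this bookkeeping: establishing the switching ratios uniformly in $(\ell,m)$ across the whole range where $N(\ell,m)$ is non-negligible, with errors controlled finely enough to isolate the $O(\dmax^3/S)$ remainder while retaining the three subleading exponential terms. This requires a precise count of how many candidate edges are ``blocked'' by points already involved in defects, and a uniform tail bound showing that pairings with more than $O(\log S)$ defects, or with any triple edge, contribute negligibly. Verifying that all of these estimates hold uniformly as $S\to\infty$, rather than merely for typical $\dvec$, is the delicate part.
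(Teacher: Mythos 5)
First, a point of calibration: the paper you were given does not prove this statement at all. Theorem~\ref{sparsenum} is imported verbatim from McKay and Wormald~\cite{MWsparse} (Theorem 5.2 there) and is used as a black box: the paper's own sparse result, Theorem~\ref{sparsetheorem}, is deduced from it by summing over possible diagonals. So the only meaningful comparison is with the original proof in~\cite{MWsparse}, and at the level of strategy your outline does coincide with it: the pairing (configuration) model, the exact factor $S!/\bigl((S/2)!\,2^{S/2}\prod_j d_j!\bigr)$, and switchings removing loops and double edges, with the class ratios $N(\ell,m)/N(\ell-1,m)$ and $N(\ell,m)/N(\ell,m-1)$ computed to second order.

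The genuine gap lies in your negligibility claims, which are false in the stated range, and whose failure is exactly what makes the theorem hard. With $\dmax=o(S^{1/3})$ the expected numbers of loops and double edges in a random pairing are roughly $S_2/(2S)$ and $S_2^2/(4S^2)$, and both tend to infinity already for regular sequences with $d\to\infty$; hence pairings with ``three or more simultaneous defects'' --- indeed with far more than $O(\log S)$ defects --- are not rare but typical, and the switching ratios must be controlled uniformly for $\ell,m$ of these diverging orders (what is genuinely rare is \emph{overlapping} defects, which is where $O(\dmax^3/S)$ comes from). Worse, triple edges cannot be dismissed: their expected number is of order $S_3^2/S^3$, which diverges for example when $d_j\equiv d\sim n^{2/5}$ (allowed, since $d=o(S^{1/3})$ is equivalent to $d=o(n^{1/2})$ in the regular case). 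This is precisely why $S_3^2/(6S^3)$ survives as a main term in the exponent; indeed, in the regular case the three subleading terms collapse to $-d^3/(12n)$ (visible in Corollary~\ref{sparseregular}), which is, to leading order, minus the expected number of triple edges. Your write-up is internally inconsistent on this point: you first assert that triple edges ``affect only the error term,'' later credit them with producing one of the main terms, and finally require a tail bound showing that pairings with \emph{any} triple edge are negligible --- no such bound exists in this range. A correct proof along these lines, as in~\cite{MWsparse}, must explicitly handle triple edges and run the switching machinery across defect counts growing polynomially in $S$; that is the substance of the theorem, not bookkeeping, and your plan as stated would only cover a substantially smaller range of degrees (roughly $\dmax=o(S^{1/4})$ and slower-growing defect counts).
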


In the case of dense matrices, the following result was due
to McKay and Wormald~\cite{MWreg} except that we will use an
improved error term from a generalization by McKay~\cite{ranx}.
A less explicit formula allowing a wider variation of the degrees
was proved by Barvinok and Hartigan~\cite{BarvHart}.

\begin{thm}[\cite{ranx}]\label{densenum}
Let $a,b>0$ be constants such that $a+b<\frac12$.  
Then there is a constant $\varepsilon_0 = \varepsilon_0(a,b) > 0$ such that
the following holds.
Suppose that $d_j-d$ is uniformly $O(n^{1/2+\eps_0})$ for $j=1,\ldots, n$
and that 
\[
     \min\{d,n-d-1\} \ge \frac {n}{3a\log n}
\]
for sufficiently large $n$. 
Then provided $S$ is even we have
\begin{equation}\label{Gdense}
G(\dvec) =\sqrt 2\,\(\lambda^\lambda (1-\lambda)^{1-\lambda}\)^{\binom n2}
   \exp\biggl( \,\frac14 - \frac{R^2}{4\lambda^2(1-\lambda)^2n^4}
                + O(n^{-b}) \biggr)
     \prod_{j=1}^n \binom{n{-}1}{d_j}.
\end{equation}
\end{thm}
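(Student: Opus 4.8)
The plan is to prove Theorem~\ref{densenum} by the complex-analytic saddle-point method, realising $G(\dvec)$ as a multidimensional contour integral concentrated at two symmetric critical points. Since $G(\dvec)$ is the coefficient of $\prod_j x_j^{d_j}$ in $\prod_{1\le j<k\le n}(1+x_jx_k)$, substituting $x_j=r_je^{i\theta_j}$ on circles of radii $r_j>0$ gives
\[
  G(\dvec)=\frac{1}{(2\pi)^n}\,\prod_{j=1}^n r_j^{-d_j}\!\!
  \int_{(-\pi,\pi]^n}\ \prod_{1\le j<k\le n}\bigl(1+r_jr_k e^{i(\theta_j+\theta_k)}\bigr)\,
  e^{-i\sum_j d_j\theta_j}\,d\theta .
\]
I would fix the radii by the saddle-point equations $\sum_{k\ne j}r_jr_k/(1+r_jr_k)=d_j$, which annihilate the first-order term of the expansion at $\theta=0$; in the regular case this yields $r_j^2=\lambda/(1-\lambda)$, and under the hypothesis $d_j-d=O(n^{1/2+\eps_0})$ the solution is a controllable perturbation of this value.

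A crucial structural observation is that, because $S=\sum_j d_j$ is even, the integrand is invariant under the shift $\theta\mapsto\theta+\pi\mathbf 1$: each factor $e^{i(\theta_j+\theta_k)}$ is unchanged while the phase picks up $(-1)^S=1$. Hence there are two equal dominant contributions, from neighbourhoods of $\theta=0$ and $\theta=\pi\mathbf 1$, and this is the source of the prefactor $\sqrt2$. I would therefore restrict to a small box $\{\,|\theta_j|\le\delta\,\}$ around the origin, double the result, and inside the box write $L(\theta)$ for the logarithm of the integrand. Its quadratic part is $-\tfrac12\theta^{\mathsf T}M\theta$ with $M_{jk}=r_jr_k/(1+r_jr_k)^2$ off-diagonal and the matching row sums on the diagonal; in the regular case $M=\lambda(1-\lambda)\bigl((n-2)I+J\bigr)$ with $J$ the all-ones matrix, so $M$ is positive definite, $\det M$ is explicit, and the Gaussian integral $\int e^{-\frac12\theta^{\mathsf T}M\theta}\,d\theta=(2\pi)^{n/2}/\sqrt{\det M}$ can be evaluated exactly.

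Combining $\prod_j r_j^{-d_j}\prod_{j<k}(1+r_jr_k)=e^{L(0)}$ with this Gaussian normalisation and applying Stirling's formula to $\binom{n-1}{d_j}$ reproduces the factor $(\lambda^\lambda(1-\lambda)^{1-\lambda})^{\binom n2}\prod_j\binom{n-1}{d_j}$; the residual $O(1)$ constant from the determinant-versus-Stirling mismatch contributes $e^{1/2}$, while the cubic and quartic terms of the expansion of $L$, together with the perturbation of the radii $r_j$ away from their regular value, supply a further $e^{-1/4}$ and the degree-spread correction $-R^2/(4\lambda^2(1-\lambda)^2n^4)$, so that the constant $\tfrac14$ in the exponent emerges from the combination. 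Carrying this expansion far enough to control the relative error down to $O(n^{-b})$, rather than merely $o(1)$, is the delicate bookkeeping furnished by~\cite{ranx}.

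The hard part will be the tail estimate: bounding the contribution of $\theta$ outside the two boxes. The modulus $\prod_{j<k}|1+r_jr_k e^{i(\theta_j+\theta_k)}|$ is maximised at the two symmetric points but is far from unimodal, so one must rule out spurious concentration, for instance near configurations with many coordinates close to $\pm\pi$. I would bound $\sum_{j<k}\log|1+r_jr_ke^{i(\theta_j+\theta_k)}|$ from above, using the lower bound $\min\{d,n-d-1\}\ge n/(3a\log n)$ to guarantee that a genuinely spread-out $\theta$ loses a super-polynomially small factor relative to the main term; the quantitative interplay between the box radius $\delta$, the degree-spread exponent governed by $a$, and the target error exponent $b$ is exactly what forces the constraint $a+b<\tfrac12$. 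Finally, the upper end $d$ close to $n-1$ is reduced to the lower end by the complementation symmetry $A\mapsto J-I-A$, which exchanges $\lambda$ with $1-\lambda$ and leaves the stated formula invariant, so it suffices to treat the case $\min\{d,n-d-1\}=d$.
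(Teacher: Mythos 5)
This statement is not proved in the paper at all: Theorem~\ref{densenum} is quoted (with an improved error term) from McKay~\cite{ranx}, building on McKay and Wormald~\cite{MWreg}, and the paper only \emph{uses} it as a black box in Section~\ref{s:dense}. So there is no in-paper proof to compare yours against; the relevant comparison is with the proofs in \cite{MWreg,ranx}, and your outline does follow the same strategy those papers use: write $G(\dvec)$ as an $n$-dimensional Cauchy integral of $\prod_{j<k}(1+x_jx_k)$, fix the radii by the saddle-point equations $\sum_{k\ne j}r_jr_k/(1+r_jr_k)=d_j$, exploit the $\theta\mapsto\theta+\pi\mathbf{1}$ symmetry (valid because $S$ is even) to get two equal saddle points and hence the factor involving $\sqrt2$, and expand around them. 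Your structural bookkeeping also checks out: the Hessian $M=\lambda(1-\lambda)\((n-2)I+J\)$ in the regular case, the $e^{1/2}$ from the determinant-versus-Stirling mismatch, and the $e^{-1/4}$ from the cubic and quartic terms, combining to the stated $e^{1/4}$.

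However, as a proof your proposal has genuine gaps, and they sit exactly where the difficulty of \cite{MWreg,ranx} lies. First, the tail estimate outside the two boxes is only described as something you ``would'' do; bounding $\prod_{j<k}\abs{1+r_jr_ke^{i(\theta_j+\theta_k)}}$ over all of $(-\pi,\pi]^n$, in particular ruling out near-critical configurations where the $\theta_j$ cluster around two antipodal values, is the technical heart of the multidimensional saddle-point method and cannot be waved through; it is also where the degree lower bound $n/(3a\log n)$ actually gets used quantitatively. Second, the uniform relative error $O(n^{-b})$ for non-regular sequences with spread $O(n^{1/2+\eps_0})$ requires a quantitative perturbation analysis of the saddle point $r_j$ and of the quartic terms — this is precisely where the correction $-R^2/\(4\lambda^2(1-\lambda)^2n^4\)$ comes from — and you assert rather than perform it. Third, and most seriously, you explicitly defer the ``delicate bookkeeping'' needed for the $O(n^{-b})$ error to \cite{ranx} itself, which is circular when the statement being proved \emph{is} the theorem of \cite{ranx}. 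In short: your text is a correct plan of attack that matches the known proof strategy, but it is not a proof.
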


This formula also matches the sparse case under slightly
more restricted conditions than Theorem~\ref{sparsenum} and is conjectured
to hold in the intermediate domain as well 
(see~\cite[Theorem 2.5]{MWdegseq} and the conjecture stated 
immediately thereafter).

Note that Theorem~\ref{densenum} remains true if $\varepsilon_0(a,b)$ is
decreased (but is still positive), since the conditions of the theorem
become stronger.

We now state our main enumeration theorems, starting with
the dense regime.

\begin{thm}\label{densetheorem}
Let $a,b>0$ be constants such that $a+b<\frac12$.  
Then there is a constant $\varepsilon = \varepsilon(a,b) > 0$ such that
the following holds.
Suppose that $d_j-d$ is uniformly $O(n^{1/2+\eps})$ for $j=1,\ldots, n$
and that 
\begin{equation}
\label{dcondition}
     \min\{d,n-d\} \ge \frac {n}{3a\log n}
\end{equation}
for sufficiently large $n$.
For $D\in \{ 1,2\}$, define
    \[\mu_D = \frac{d}{n+D-1},\] 
    and let
\begin{align*}
  Q_1(\dvec,\ell) &= 
     \frac{1}{4} + \frac{(\ell- d)^2}{4d(n-d)}
       - \frac{(\ell-d)^2R}{2d^2(n-d)^2}
       - \frac{R^2}{4d^2(n-d)^2}, \\[0.5ex]
  Q_2(\dvec,\ell) &= \frac{1}{4} - \frac{\ell(n-\ell)}{\mu_2(1-\mu_2)n^2}
     - \frac{R^2}{4\mu_2^2(1-\mu_2)^2n^4}
     + \frac{R}{\mu_2(1-\mu_2)n^2} \\
     &{\kern12mm} + \frac{(1-2\mu_2)(\ell-\mu_2 n)R}{\mu_2^2(1-\mu_2)^2n^3}
     - \frac{2(\ell-\mu_2 n)^2R}{\mu_2^2(1-\mu_2)^2 n^4}. 
     \end{align*}
When $\ell$ has the same parity as $S$ we have
\begin{align*}
   G_1(\dvec,\ell) = 
  \sqrt 2\,\(\mu_1^{\mu_1} &(1-\mu_1)^{1-\mu_1}\)^{ n^2/2}
     \binom{n}{\ell}
     \mu_1^{\ell/2}\, (1-\mu_1)^{(n-\ell)/2}\\[-1ex]
   &{\qquad}\times\exp\( Q_1(\dvec,\ell) + O(n^{-b}) \)
     \prod_{j=1}^n \binom{n}{d_j},
\end{align*}
while for\/ $\ell=0,\ldots, n$ and even $S$ we have
\begin{align*}
   G_2(\dvec,\ell) = 
  \sqrt 2\,\(\mu_2^{\mu_2} &(1-\mu_2)^{1-\mu_2}\)^{\binom{n+1}{2}}
     \binom{n}{\ell}
   \mu_2^\ell\, (1-\mu_2)^{n-\ell}\\[-1ex] 
   &{\quad}\times\exp\( Q_2(\dvec,\ell) + O(n^{-b}) \)
     \prod_{j=1}^n \binom{n+1}{d_j}.
\end{align*}
Defining
\[ 
  \bar\ell_1 = \frac{d^{1/2} n}{d^{1/2}+(n-d)^{1/2}},\qquad
  \bar\ell_2 = \mu_2 n = \frac{dn}{n+1},
\]
we have
\begin{align*}
   G_1(\dvec) &= 
  \frac{1}{\sqrt{2}}
	\(\mu_1^{\mu_1} (1-\mu_1)^{1-\mu_1}\)^{n^2/2}
     \(\mu_1^{1/2}+(1-\mu_1)^{1/2}\)^{\!n}\\[-1ex]
   &{\kern40mm}\times \exp\( Q_1(\dvec,\bar\ell_1) + O(n^{-b}) \)
     \prod_{j=1}^n \binom{n}{d_j}
\end{align*}
and, for even $S$,
\begin{align*}
   G_2(\dvec) &= 
   \sqrt{2}\,
	\(\mu_2^{\mu_2} (1-\mu_2)^{1-\mu_2}\)^{\binom{n+1}{2}}
  \exp\( Q_2(\dvec,\bar\ell_2) + O(n^{-b}) \)
     \prod_{j=1}^n \binom{n{+}1}{d_j}.  \quad \qed
\end{align*}
\end{thm}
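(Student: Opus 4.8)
The plan is to condition on the set of loops and reduce the loopy count to the loopless count $G(\dvec)$, to which Theorem~\ref{densenum} applies. Fix $\ell$ and a loop set $L\subseteq\{1,\ldots,n\}$ with $\card L=\ell$. Removing the diagonal leaves a symmetric $0$-$1$ matrix of zero diagonal whose $j$-th row sum is $d_j-D$ for $j\in L$ and $d_j$ otherwise; writing $\dvec^{(L)}$ for this reduced sequence, we have
\[
  G_D(\dvec,\ell)=\sum_{\card L=\ell}G\(\dvec^{(L)}\).
\]
Because $D=O(1)$, each $\dvec^{(L)}$ still satisfies the hypotheses of Theorem~\ref{densenum} (with the same $a,b$ and a possibly smaller positive exponent), so every summand has an explicit expansion. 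The first observation is that the leading entropy factor of $G(\dvec^{(L)})$, the constant $\sqrt2$, and the additive $\tfrac14$ in the exponent all depend on $L$ only through $S^{(L)}=S-D\ell$, hence only through $\ell$, and so may be pulled out of the sum; write $\lambda_\ell$ for the corresponding common value of the density parameter.

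The heart of the argument is the residual sum over loop sets of size $\ell$,
\[
  \sum_{\card L=\ell}\exp\!\Bigl(-\tfrac{(R^{(L)})^2}{4\lambda_\ell^2(1-\lambda_\ell)^2n^4}\Bigr)\prod_{j=1}^n\binom{n-1}{d_j^{(L)}},
\]
where $R^{(L)}=\sum_j\(d_j^{(L)}-d^{(L)}\)^2$. Setting $w_j=\binom{n-1}{d_j-D}/\binom{n-1}{d_j}$ for the relative cost of a loop at $j$, the product factorises as $\prod_j\binom{n-1}{d_j}$ times $\prod_{j\in L}w_j$, so the binomial part of the sum is $\prod_j\binom{n-1}{d_j}$ times the elementary symmetric polynomial $e_\ell(w_1,\ldots,w_n)$. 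Since the degrees are concentrated the $w_j$ cluster about a common value comparable to $\(\mu_D/(1-\mu_D)\)^D$, whence $e_\ell$ is close to $\binom n\ell$ times that value raised to the $\ell$, the spread of the $w_j$ contributing the $R$-dependent corrections. The Pascal identities $\binom{n-1}{d_j}+\binom{n-1}{d_j-1}=\binom n{d_j}$ (for $D=1$) and $\binom{n-1}{d_j}+2\binom{n-1}{d_j-1}+\binom{n-1}{d_j-2}=\binom{n+1}{d_j}$ (for $D=2$) are what recombine the per-vertex factors into $\prod_j\binom{n+D-1}{d_j}$, while re-expanding the $\ell$-dependent entropy $\(\lambda_\ell^{\lambda_\ell}(1-\lambda_\ell)^{1-\lambda_\ell}\)^{\binom n2}$ about $\mu_D$ produces the $\ell$-independent entropy (with exponent $n^2/2$ when $D=1$ and $\binom{n+1}2$ when $D=2$) together with a binomial weight, equal to $\mu_2^{\ell}(1-\mu_2)^{n-\ell}$ when $D=2$ and to $\mu_1^{\ell/2}(1-\mu_1)^{(n-\ell)/2}$ when $D=1$. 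Carrying out this matching to additive accuracy $O(n^{-b})$ in the exponent, and in particular retaining the second-order coupling between the choice of $L$ and the deviations $d_j-d$, is what yields the explicit correction $Q_D(\dvec,\ell)$, including the cross terms in $(\ell-\bar\ell_D)$ and $R$. I expect this bookkeeping to be the main obstacle.

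The unconditional formulae follow by summing on $\ell$. For $D=2$ (with $S$ even) every $\ell\in\{0,\ldots,n\}$ occurs and $\sum_\ell\binom n\ell\mu_2^{\ell}(1-\mu_2)^{n-\ell}=1$; the binomial weight concentrates at $\bar\ell_2=\mu_2n$ in a window of width $O(\sqrt n)$, and as $Q_2(\dvec,\ell)$ is smooth there, Laplace's method (using the first two binomial moments) replaces $Q_2(\dvec,\ell)$ by $Q_2(\dvec,\bar\ell_2)$ within the permitted error, the contributions from $\ell$ outside the bulk being negligible by cruder bounds. For $D=1$ only $\ell\equiv S\pmod2$ contributes, and the even/odd binomial theorem gives
\[
  \sum_{\ell\equiv S\,(2)}\binom n\ell\mu_1^{\ell/2}(1-\mu_1)^{(n-\ell)/2}=\tfrac12\Bigl(\(\mu_1^{1/2}+(1-\mu_1)^{1/2}\)^{n}\pm\(\,(1-\mu_1)^{1/2}-\mu_1^{1/2}\)^{n}\Bigr).
\]
Since $\bigl\lvert(1-\mu_1)^{1/2}-\mu_1^{1/2}\bigr\rvert<(1-\mu_1)^{1/2}+\mu_1^{1/2}$, the second term is exponentially negligible, leaving the factor $\(\mu_1^{1/2}+(1-\mu_1)^{1/2}\)^{n}$ and turning the prefactor $\sqrt2$ into $1/\sqrt2$; evaluating $Q_1$ at $\bar\ell_1$ as above then completes the proof.
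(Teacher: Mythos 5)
Your overall route coincides with the paper's: decompose $G_D(\dvec,\ell)$ as a sum of $G(\dvec-D\zvec)$ over diagonal vectors $\zvec$ with $\card\zvec=\ell$, apply Theorem~\ref{densenum} to each reduced sequence, factor each summand into an $\ell$-dependent part times per-vertex weights so that the sum over loop sets becomes an elementary symmetric polynomial of those weights, and finally sum over $\ell$ using binomial concentration near $\bar\ell_D$, with the even/odd binomial theorem disposing of the parity restriction when $D=1$. Those outer steps are correct and are exactly what the paper does.

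The genuine gap is the step you label as bookkeeping. Writing the weights as $w_j=e^{\beta_j}$ with $\beta_j=\OO(n^{-1/2})$, what the argument requires is the estimate
\[
  e_\ell(w_1,\ldots,w_n)=\binom{n}{\ell}\exp\biggl(\ell\bar\beta
   +\frac{\ell(n-\ell)}{2n^2}\sum_{j=1}^n(\beta_j-\bar\beta)^2
   +\OO(n^{-1/2})\biggr),
\]
uniformly for \emph{all} $\ell=0,\ldots,n$, where $\bar\beta=\frac1n\sum_j\beta_j$. The explicit second-order term here is precisely what generates the $R$-dependent and cross terms in $Q_D(\dvec,\ell)$, so it cannot be hidden in the error; and the per-$\ell$ formula is claimed for every $\ell$, so uniformity matters. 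This estimate is the main technical content of the paper's proof (Lemma~\ref{Ul1}): for $n^{1/2-c\eps}\le\ell\le n-n^{1/2-c\eps}$ it is proved by a saddle-point analysis of the coefficient of $y^\ell$ in $\prod_j(1+e^{\beta_j}y)$, while for $\ell$ near $0$ or $n$, where the Gaussian picture degenerates, a separate moment expansion exploiting the normalization $\sum_j\beta_j=0$ is needed. Your proposal offers no method for establishing this, so $Q_D(\dvec,\ell)$ is never actually derived. Three smaller points: the Vandermonde identity $\binom{n-1}{d_j}+2\binom{n-1}{d_j-1}+\binom{n-1}{d_j-2}=\binom{n+1}{d_j}$ cannot be the literal recombination mechanism for $D=2$, since a loop changes a degree by $2$ and the middle term never occurs --- the passage to $\prod_j\binom{n+D-1}{d_j}$ is purely asymptotic, via $\binom{n-1}{d_j-Dz_j}\big/\binom{n+D-1}{d_j}\approx\mu_D^{Dz_j}(1-\mu_D)^{D(1-z_j)}$ with explicit correction terms; you must also verify that the quantity $R^{(L)}$ enters only through $R$ and $\ell$ up to relative error $\exp\bigl(\OO(n^{-1/2})\bigr)$, which is true but needs the computation; and Theorem~\ref{densenum} does not quite apply to $\dvec-D\zvec$ ``with the same $a$,'' because the reduced average degree drops slightly --- the paper inserts an intermediate constant $a<\widehat a<\frac12-b$ to absorb this.
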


In Theorem~\ref{distribution} we will prove that
$\bar{\ell}_D$ is close to the expected number of loops in a randomly
chosen element of $\mathcal{G}_D(\dvec)$.
For the reader's convenience, we note that
\begin{align*}
  Q_2(\dvec,\bar\ell_2) &= -\frac{1}{4}
    \biggl( 1-\frac{R}{\mu_2(1-\mu_2)n^2}\biggr)
    \biggl( 3-\frac{R}{\mu_2(1-\mu_2)n^2}\biggr).
\end{align*}
Unfortunately, the expression for $Q_1(\dvec,\bar\ell_1)$ does
not simplify much.
In the case of regular graphs we have $R=0$, so the formulae
for $Q_D(\dvec,\ell)$ simplify greatly and in particular
\[
    Q_1(\dvec,\bar\ell_1) = \frac{n}{2n+4\sqrt{d(n-d)}}\, .
\]

Our main result for the sparse case is the following.

\begin{thm}
Suppose that $1\leq \dmax = o(S^{1/3})$. 
Then 
\begin{align*}
G_1(\dvec) &= \frac{1}{\sqrt{2}}\,
  \biggl(\frac{S}{e}\biggr)^{\!S/2}\, \biggl(\,\prod_{j=1}^n d_j!\biggr)^{\!-1}
   \exp\biggl(\sqrt{S} - \frac{1}{4}
   - \frac{S_2}{S} - \frac{S_2^2}{4S^2} \\
 & \hspace*{1cm} {} + \frac{7}{24S^{1/2}}
  + \frac{S_2}{S^{3/2}} + \frac{S_3}{3S^{3/2}}
  + \frac{S_2^2}{2S^{5/2}} - \frac{S_2^2S_3}{2S^4} + 
   \frac{S_2^4}{4S^5} + \frac{S_3^2}{6S^3} + O\biggl(\frac{\dmax^3}{S}\biggr)
   \biggr)
\end{align*}
uniformly as $S\to\infty$, and
\begin{align*}
G_2(\dvec) &= \sqrt{2} \, \biggl(\frac{S}{e}\biggr)^{\!S/2}
   \, \biggl(\,\prod_{j=1}^n d_j!\biggr)^{\!-1}
   \exp\biggl(\frac{S_2}{2S} - \frac{S_2^2}{4S^2} - \frac{S_2^2S_3}{2S^4}
       +\frac{S_2^4}{4S^5} + \frac{S_3^2}{6S^3} + O\biggl(\frac{\dmax^3}{S}\biggr)
   \biggr)
\end{align*}
uniformly as $S\to\infty$ with $S$ even.
\label{sparsetheorem}
\end{thm}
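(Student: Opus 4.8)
The plan is to reduce $G_D(\dvec)$ to the zero-diagonal counts controlled by Theorem~\ref{sparsenum} by summing over the positions of the loops. For a set $L\subseteq\{1,\ldots,n\}$, the matrices in $\mathcal{G}_D(\dvec)$ whose loop set is exactly $L$ are in bijection with the zero-diagonal symmetric 0-1 matrices with row-sum vector $\dvec-D\mathbf{1}_L$ (the degree at each loop position being reduced by $D$), so
\[
   G_D(\dvec)=\sum_{L\subseteq\{1,\ldots,n\}} G\(\dvec-D\mathbf{1}_L\),
\]
where a term is read as $0$ when $\dvec-D\mathbf{1}_L$ has a negative entry or odd sum. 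First I would identify the dominant window of the loop count $\ell=\card{L}$ and check that every reduced sequence arising there still obeys $\dmax=o(S^{1/3})$ and has sum $\sim S$, so that Theorem~\ref{sparsenum} applies uniformly; contributions of $\ell$ outside the window will be bounded separately.

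Write $M(\dvec)$ for the factorial factor of Theorem~\ref{sparsenum} and $E(\dvec)$ for its exponent, so $G(\dvec)=M(\dvec)\exp\(E(\dvec)\)$. The structural observation is that the ratio of factorial factors splits off the subset dependence:
\[
   \frac{M\(\dvec-D\mathbf{1}_L\)}{M(\dvec)}=g_D(\ell)\prod_{j\in L}[d_j]_D,
   \qquad g_2(\ell)=\prod_{i=0}^{\ell-1}\frac{1}{S-2i-1},
\]
with $g_D(\ell)$ depending on $L$ only through $\ell$ and an analogous closed form for $g_1$. Summing over sets of a fixed size then gives the elementary symmetric polynomial $\sum_{\card{L}=\ell}\prod_{j\in L}[d_j]_D=e_\ell$ in the variables $[d_1]_D,\ldots,[d_n]_D$, which I would expand as $S_D^{\,\ell}/\ell!$ (writing $S_D=\sum_j[d_j]_D$, so $S_1=S$ and $S_2$ as already defined) with corrections in the lower power sums. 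I would then expand $g_D(\ell)$, which is $S^{-D\ell/2}$ times a factor $\exp\(c_D\ell^2/S+\cdots\)$, and the difference $E\(\dvec-D\mathbf{1}_L\)-E(\dvec)$, whose leading part comes from $\Delta S_2=-4\sum_{j\in L}d_j+6\ell$ for $D=2$ and the analogous shift for $D=1$, each to the order needed for the claimed precision.

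It remains to sum over $\ell$. For $D=2$ the mean weight $\sum_j[d_j]_2/S=S_2/S$ is $\Theta(\dmax)$, the series is essentially Poisson, and $\sum_\ell g_2(\ell)e_\ell$ evaluates to $e^{S_2/S}$ to leading order; since $G(\dvec)=M(\dvec)\exp\(E(\dvec)\)$, multiplying by this factor turns the $-S_2/(2S)$ of Theorem~\ref{sparsenum} into the $+S_2/(2S)$ of the target, the remaining explicit terms arising from the corrections assembled above. For $D=1$ one has per-loop weight $d_j/S^{1/2}$ with $\sum_j d_j/S^{1/2}=S^{1/2}$, so the typical loop count is $\ell\sim\sqrt S$ and the series is exponential, producing the factor $e^{\sqrt S}$; here the parity constraint $\ell\equiv S\pmod 2$ keeps only half the series ($\cosh$ or $\sinh$ in place of $\exp$), which supplies exactly the change of prefactor from $\sqrt2$ in $M(\dvec)$ to $\tfrac1{\sqrt2}$, while a Laplace evaluation around $\ell\sim\sqrt S$ yields the constant $-\tfrac14$ and the $O(S^{-1/2})$ terms such as $\tfrac{7}{24S^{1/2}}$.

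The hard part will be the uniform error control, and it has two sources. First, the many second- and third-order contributions---from $e_\ell$ against $S_D^{\,\ell}/\ell!$, from the expansion of $g_D(\ell)$, and from $E\(\dvec-D\mathbf{1}_L\)-E(\dvec)$---must be tracked together and shown to assemble into precisely the stated explicit terms, with the numerous spurious terms cancelling so that only $O(\dmax^3/S)$ survives; this bookkeeping is the real substance of the argument. Second, because Theorem~\ref{sparsenum} need not hold once $\ell$ leaves the dominant window, I would prove a crude super-exponential decay bound for $G_D(\dvec,\ell)$ in $\ell$ (via a ratio/unimodality estimate for consecutive values, stepping by $2$ to preserve parity when $D=1$) to show that the discarded tail is negligible beside the error already present.
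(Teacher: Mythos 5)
Your proposal is correct and follows essentially the same route as the paper: decompose $G_D(\dvec)$ as a sum over diagonal vectors, apply Theorem~\ref{sparsenum} to each reduced sequence $\dvec-D\zvec$, extract the per-loop weights $[d_j]_D/S^{D/2}$, and evaluate the resulting series over $\ell$ (Poisson-like with $\ell\approx S_2/S$ for $D=2$, concentrated at $\ell\approx\sqrt S$ for $D=1$, with the even/odd split supplying exactly the change of prefactor from $\sqrt2$ to $1/\sqrt2$). Your elementary-symmetric-polynomial bookkeeping is the paper's Poisson-binomial expectation machinery (Lemmas~\ref{small-ell}--\ref{Kexpectation}, with Lemma~\ref{parity} playing the role of your cosh/sinh split) in different clothing; the only real divergence is the tail, where the paper, in the regime $n\le S\le n\log n$, bounds $G(\dvec-D\zvec)$ for $\card\zvec>S/3$ crudely via the probability interpretation $G(\dvec)\le S!/\bigl((S/2)!\,2^{S/2}\prod_j d_j!\bigr)$ rather than by your proposed unimodality/ratio estimate, but either mechanism suffices.
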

If $S$ is even then we may replace
the factor $\sqrt{2}\,(S/e)^{S/2}$ by $S!/\((S/2)!\,2^{S/2}\)$.
In the regular case the formulae simplify as follows.

\begin{cor}
\label{sparseregular}
Suppose that $1\leq d = o(n^{1/2})$.  Then
\begin{align*}
G_1(n,d) &= 
   \frac{1}{\sqrt{2}}\, (d!)^{-n}\, \biggl(\frac{nd}{e}\biggr)^{\!nd/2}\,\\
           & \hspace*{1cm} {} \times
          \exp\biggl(\frac{2-2d-d^2}{4} 
	           + \frac{24(n-1)d + 20d^2 + 11}{24\sqrt{nd}} 
		             - \frac{d^3}{12n} + O\biggl(\frac{d^2}{n}\biggr)
			      \biggr)
\end{align*}
uniformly as $n\to\infty$, and
\begin{align*}
G_2(n,d) &= \sqrt{2}\, (d!)^{-n} \biggl(\frac{nd}{e}\biggr)^{\!nd/2}
                 \exp\biggl(-\frac{(d-1)(d-3)}{4} - \frac{d^3}{12n} 
		            + O\biggl(\frac{d^2}{n}\biggr)\biggr)
\end{align*}
uniformly as $n\to\infty$ with $nd$ even.
\end{cor}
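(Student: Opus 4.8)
The plan is to obtain Corollary~\ref{sparseregular} as a direct specialization of Theorem~\ref{sparsetheorem} to the regular case $d_j=d$ for all $j$. First I would record the values of all parameters appearing in that theorem. Setting $d_j = d$ gives
\[
  S = nd, \qquad S_2 = nd(d-1), \qquad S_3 = nd(d-1)(d-2), \qquad \prod_{j=1}^n d_j! = (d!)^n,
\]
and the hypothesis $\dmax = o(S^{1/3})$ becomes $d = o\((nd)^{1/3}\)$, i.e.\ $d^2 = o(n)$, which is exactly the condition $d = o(n^{1/2})$ of the corollary. The prefactors are then immediate: $\tfrac{1}{\sqrt2}(S/e)^{S/2}\(\prod_j d_j!\)^{-1}$ becomes $\tfrac{1}{\sqrt2}(d!)^{-n}(nd/e)^{nd/2}$, and similarly for $G_2$ with $\sqrt2$ in place of $\tfrac1{\sqrt2}$. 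Everything else reduces to substituting the parameter values into the exponents and regrouping by order.

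For $G_2$ the exponent has three scales. The $O(1)$ part comes from the first two terms,
\[
  \frac{S_2}{2S} - \frac{S_2^2}{4S^2} = \frac{d-1}{2} - \frac{(d-1)^2}{4} = -\frac{(d-1)(d-3)}{4},
\]
which is the stated constant. The remaining three terms $-\frac{S_2^2 S_3}{2S^4}+\frac{S_2^4}{4S^5}+\frac{S_3^2}{6S^3}$ are each of order $d^3/n$; replacing $(d-1)$ and $(d-2)$ by $d$ (the discrepancy is $O(d^2/n)$, absorbed into the existing error) their leading parts combine as $\frac{d^3}{n}\bigl(-\tfrac12+\tfrac14+\tfrac16\bigr) = -\frac{d^3}{12n}$, giving the claimed formula for $G_2(n,d)$.

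The argument for $G_1$ is the same but the bookkeeping is heavier because of the half-integer powers of $S$. The three $O(1)$ terms $-\tfrac14 - \frac{S_2}{S} - \frac{S_2^2}{4S^2}$ collapse to $\frac{2-2d-d^2}{4}$. The five terms of order $S^{-1/2}$, namely $\sqrt S$, $\frac{7}{24S^{1/2}}$, $\frac{S_2}{S^{3/2}}$, $\frac{S_3}{3S^{3/2}}$ and $\frac{S_2^2}{2S^{5/2}}$, together form the middle summand of the stated exponent: writing $\sqrt S = nd/\sqrt{nd}$ and extracting the common factor $(nd)^{-1/2}$, the bracketed polynomial is $nd - d + \tfrac56 d^2 + \tfrac{11}{24}$, which is precisely $\frac{24(n-1)d+20d^2+11}{24}$. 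Finally the three terms $-\frac{S_2^2S_3}{2S^4}+\frac{S_2^4}{4S^5}+\frac{S_3^2}{6S^3}$ again contribute $-\frac{d^3}{12n}$, exactly as in the $G_2$ computation.

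There is no deep obstacle here; the work is entirely arithmetic, and the only genuine care needed is in matching each group of terms to the correct scale against the error $O(\dmax^3/S) = O(d^2/n)$: the middle summand and the $d^3/n$-scale summand (common to both values of $D$) must be retained, while the $O(d^2/n)$ remainders produced by the approximations $(d-1),(d-2)\approx d$ are swept into the error term. The place where a sign or coefficient slip is easiest to make is precisely these combinations, so I would pin down the coefficient $-\tfrac1{12}$ of $d^3/n$ via the exact sum $-\tfrac12+\tfrac14+\tfrac16$, and re-derive the constant $\tfrac{11}{24}$ as $\tfrac{7}{24}-1+\tfrac23+\tfrac12$, since these two checks capture essentially all of the arithmetic that could go wrong.
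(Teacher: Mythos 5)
Your proposal is correct and follows exactly the route the paper intends: Corollary~\ref{sparseregular} is stated as an immediate specialization of Theorem~\ref{sparsetheorem} (the paper gives no separate proof), obtained by substituting $S=nd$, $S_2=nd(d-1)$, $S_3=nd(d-1)(d-2)$, $\prod_j d_j!=(d!)^n$ and regrouping by scale against the error $O(\dmax^3/S)=O(d^2/n)$. Your arithmetic checks out at every scale, including the constant $\tfrac{2-2d-d^2}{4}$, the bracket $24nd-24d+20d^2+11$, the combination $-\tfrac12+\tfrac14+\tfrac16=-\tfrac1{12}$ for the $d^3/n$ term, and the $D=2$ constant $-\tfrac{(d-1)(d-3)}{4}$.
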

\noindent Again, if $nd$ is even then the factor 
$\sqrt{2}\,(nd/e)^{nd/2}$
may be replaced by 
$(nd)!/\((nd/2)!\, 2^{nd/2}\)$.

Theorems~\ref{densetheorem} and~\ref{sparsetheorem} are proved
in Section~\ref{s:dense} and~\ref{s:sparse}, respectively.
Along the way we prove some technical results 
(Lemmas~\ref{Ul1},~\ref{technical},~\ref{parity}) which 
may be of independent interest.
But first, in Section~\ref{ss:distribution} we state a theorem on the
distribution of the trace of a random element of
$G_D(\dvec)$, and discuss some interesting features of this
distribution.  Theorem~\ref{distribution} is proved
in Section~\ref{s:distributionproof}.
Finally in Section~\ref{s:conjecture} we state a conjecture
regarding the number of regular graphs with loops, for all possible
degrees.

\subsection{The distribution of the trace}\label{ss:distribution}

The calculations we will give in the process of proving
Theorems~~\ref{densetheorem} and~\ref{sparsetheorem} will provide
some information on the distribution of the trace of a
random element of~$\mathcal{G}_D(\dvec)$.  We summarize that
information here.

For $\pvec=(p_1,\ldots,p_n)\in [0,1]^n$, let $X_1,\ldots,X_n$ be
independent random variables with $\Prob(X_j=0)=1-p_j$ and
$\Prob(X_j=1)=p_j$ for each~$j$.
The \textit{Poisson binomial distribution} $\PB(\pvec)$ 
is the distribution of $\sum_{j=1}^n X_j$.
Define
\[  \PB(\pvec,\ell) = \Prob\Bigl(\,\sum_{j=1}^n X_j = \ell\Bigr). \]
The special case $\pvec=(p,p,\ldots,p)$ gives the familiar binomial
distribution,
\[  \PB((p,\ldots,p),\ell) =\Bin(n,p,\ell) = \binom{n}{\ell} p^\ell(1-p)^{n-\ell}. \]

\begin{thm}
\label{distribution}
Let $Y_D=Y_D(\dvec)$ be the random variable given by the trace of
an element of $\mathcal{G}_D(\dvec)$ chosen uniformly at random.
\begin{enumerate}
\item[(i)] If the conditions of Theorem~\ref{densetheorem} hold then,
 for\/ $\ell=0,\ldots,n$,
 \begin{align*}
   \Prob(Y_1=\ell) &= \(2 + O(n^{-b})\) \Bin(n,\bar\ell_1/n,\ell)
        +O(e^{-n^{\Omega(1)}}),\\
   \E(Y_1) &= \bar\ell_1\(1+O(n^{-b})\),\\
   \Var(Y_1) &= \bar\ell_1(1-\bar\ell_1/n)\(1+O(n^{-b})\),
                \displaybreak[0]\\[1ex]
   \Prob(Y_2=\ell) &= \(1 + O(n^{-b})\) \Bin(n,\bar\ell_2/n,\ell)
       + O(e^{-n^{\Omega(1)}}),\\
   \E(Y_2) &= \bar\ell_2\(1+O(n^{-b})\),\\
   \Var(Y_2) &= \bar\ell_2(1-\bar\ell_2/n)\(1+O(n^{-b})\),
\end{align*}
where $\ell$ must have the same parity as $S$ in the $D=1$ case
and $S$ must be even in the $D=2$ case.
\item[(ii)] Define
$\pvec'=(p'_1,\ldots,p'_n)$ and $\pvec''=(p''_1,\ldots,p''_n)$, where
for $j=1,\ldots,n$,
\begin{align*}
   p'_j &= \frac{d_j}{\sqrt S} - \frac{d_j(2d_j-1)}{2S}
     + \frac{d_j^3}{S^{3/2}}
     + \frac{d_j(d_j-2)S_2}{S^{5/2}}
     - \frac{d_jS_2^2}{2S^{7/2}}, \\[0.6ex]
   p''_j &= \frac{d_j(d_j-1)}{S}.
\end{align*}
If the conditions of Theorem~\ref{sparsetheorem} hold then,
 for\/ $\ell=0,\ldots,n$,
  \begin{align*}
   \Prob(Y_1=\ell) 
     &= \biggl(2 + O\biggl(\frac{\dmax^3}{S}+S^{-1/3}\biggr)\biggr)
                    \PB(\pvec',\ell) +O(e^{-S^{\Omega(1)}}),\\
    \E(Y_1) &= \sqrt{S} - \frac{S_2}{S}  - \frac{1}{2} 
        +O\biggl(\frac{\dmax^3}{S^{1/2}}\biggr) \\
     \Var(Y_1) &= \sqrt{S} - \frac{2S_2}{S} -1
        +O\biggl(\frac{\dmax^3}{S^{1/2}}\biggr) \displaybreak[0] \\
   \Prob(Y_2=\ell) &= \biggl(1 + O\biggl(\frac{\dmax^2}{S^{2/3}}+S^{-1/3}\biggr)\biggr)
                                 \PB(\pvec'',\ell) + O(e^{-S^{\Omega(1)}}),\\
  \E(Y_2) &=  \biggl(1 + O\biggl(\frac{\dmax^3}{S}\biggr)\biggr)\,
         \frac{S_2}{S},\\
  \Var(Y_2) &=  \biggl(1 + O\biggl(\frac{\dmax^3}{S}\biggr)\biggr)\,
         \frac{S_2}{S},
\end{align*}
where $\ell$ must have the same parity as $S$ in the $D=1$ case
and $S$ must be even in the $D=2$ case.
\end{enumerate}

\end{thm}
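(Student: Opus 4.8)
The plan is to use the partition $\mathcal G_D(\dvec)=\bigcup_{\ell=0}^{n}\mathcal G_D(\dvec,\ell)$, so that the trace of a uniformly random element obeys
\[
  \Prob(Y_D=\ell)=\frac{G_D(\dvec,\ell)}{G_D(\dvec)},
\]
and to reduce everything to an analysis of these ratios, together with a separate and more delicate computation of the first two moments.

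For the dense case~(i) I would substitute directly the formulae of Theorem~\ref{densetheorem} for $G_D(\dvec,\ell)$ and for $G_D(\dvec)$. In the ratio the factors $\bigl(\mu_D^{\mu_D}(1-\mu_D)^{1-\mu_D}\bigr)$, raised to the same power in $G_D(\dvec,\ell)$ and in $G_D(\dvec)$, and the products $\prod_j\binom{\cdot}{d_j}$ all cancel, leaving $\binom n\ell$ times a power of $\mu_D$ and $1-\mu_D$, divided by the normalising constant $\bigl(\mu_1^{1/2}+(1-\mu_1)^{1/2}\bigr)^{n}$ when $D=1$ and by~$1$ when $D=2$, the whole multiplied by $\exp\bigl(Q_D(\dvec,\ell)-Q_D(\dvec,\bar\ell_D)+O(n^{-b})\bigr)$. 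Since $\mu_1=d/n$ and $\mu_2=d/(n+1)$, the definitions of $\bar\ell_1$ and $\bar\ell_2$ give $\bar\ell_1/n=\mu_1^{1/2}/(\mu_1^{1/2}+(1-\mu_1)^{1/2})$ and $\bar\ell_2/n=\mu_2$, so this ``binomial-like'' factor is \emph{exactly} $\Bin(n,\bar\ell_D/n,\ell)$; the prefactors $\sqrt2$ versus $1/\sqrt2$ supply the multiplicative constant $2$ for $D=1$ and $1$ for $D=2$. It then remains to dispose of $Q_D(\dvec,\ell)-Q_D(\dvec,\bar\ell_D)$: for $\ell$ within $O(n^{1/2+\eps})$ of $\bar\ell_D$ this difference is $O(n^{-b})$ and is swallowed by the stated factor, while for $\ell$ further out the binomial weight is itself $e^{-n^{\Omega(1)}}$, giving the additive error term. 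The mean and variance follow by summing $\ell$ and $\ell^2$ against this approximation: the relative error is $O(n^{-b})$, and a binomial of variance $\Omega(n/\log n)$ divides its mass between the two parity classes up to an exponentially small defect, so that $\E(Y_D)=\bar\ell_D(1+O(n^{-b}))$ and $\Var(Y_D)=\bar\ell_D(1-\bar\ell_D/n)(1+O(n^{-b}))$.

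For the sparse case~(ii) the count $G_D(\dvec,\ell)$ is not recorded in Theorem~\ref{sparsetheorem}, so I would first use the loop-removal decomposition
\[
  G_1(\dvec,\ell)=\sum_{\card L=\ell}G(\dvec-\mathbf 1_L),
  \qquad
  G_2(\dvec,\ell)=\sum_{\card L=\ell}G(\dvec-2\,\mathbf 1_L),
\]
the sums running over $\ell$-subsets $L\subseteq\{1,\dots,n\}$, with $\mathbf 1_L$ the indicator vector of~$L$. Applying Theorem~\ref{sparsenum} to each summand and expanding $G(\dvec-c\,\mathbf 1_L)/G(\dvec)$ shows that to leading order it equals $\prod_{j\in L}p_j$; tracking the effect of the degree shift on $S$, $\prod_j d_j!$, $S_2$ and $S_3$ reproduces precisely the parameters $p_j'$ (for $D=1$) and $p_j''=d_j(d_j-1)/S$ (for $D=2$). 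Summing over~$L$ gives $G_D(\dvec)\sim G(\dvec)\sum_L\prod_{j\in L}p_j$, whence $\Prob(Y_D=\ell)\sim\PB(\pvec,\ell)$. For $D=2$ every trace is admissible once $S$ is even and this is the whole story. For $D=1$ only $\ell\equiv S\pmod 2$ occur, so the sum is restricted to $\card L\equiv S$, and the elementary identity
\[
  \sum_{\card L\equiv S}\prod_{j\in L}p_j'
  =\tfrac12\Bigl(\prod_j(1+p_j')+(-1)^S\prod_j(1-p_j')\Bigr),
\]
combined with $\prod_j(1-p_j')\le e^{-\sum_j p_j'}=e^{-\Omega(\sqrt S)}$ (since $\sum_j p_j'\sim\sqrt S$), produces the factor~$2$.

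The mean and variance in the sparse case are the main obstacle. The multiplicative error $O(\dmax^3/S+S^{-1/3})$ in the distributional approximation is far too coarse to recover the $O(1)$ constants in $\E(Y_1)$ and $\Var(Y_1)$; indeed the naive moments of $\PB(\pvec')$ already disagree with the claimed variance by an additive $\tfrac12$. I would therefore compute the moments directly through the factorial identities $\E(Y_D)=\sum_j\Prob(X_j=1)$ and $\E\bigl(Y_D(Y_D-1)\bigr)=\sum_{j\ne k}\Prob(X_j=X_k=1)$, where $X_j$ indicates a loop at~$j$, evaluating each probability as a ratio of loopless counts with the relevant degrees reduced and estimated by Theorem~\ref{sparsenum}. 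The delicate point is that $G(\dvec-c\,\mathbf 1_L)/G(\dvec)$ does not factorise exactly: the terms $S_2^2$, $S_3^2$, $S_2^2S_3$ and $S_2^4$ in Theorem~\ref{sparsenum} are nonlinear in the degrees, hence non-additive in~$L$, and couple distinct loop sites, so the $X_j$ are weakly dependent rather than independent. Carrying these interaction terms to sufficient order is exactly what produces the $O(1)$ discrepancy between the true moments and those of the approximating Poisson binomial, and the same bookkeeping, already needed to evaluate $G_D(\dvec)=\sum_\ell G_D(\dvec,\ell)$ in the proof of Theorem~\ref{sparsetheorem}, delivers the stated expansions of $\E(Y_D)$ and $\Var(Y_D)$.
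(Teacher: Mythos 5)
Your distributional estimates are essentially the paper's own argument: in the dense case, taking the ratio of the $G_D(\dvec,\ell)$ and $G_D(\dvec)$ formulae of Theorem~\ref{densetheorem} is the same computation the paper performs via \eqref{binexp} and \eqref{bintail}, and your identification of the binomial factor and of the constants $2$ and $1$ is correct; in the sparse case, your loop-set decomposition, the expansion of $G(\dvec-D\zvec)/G(\dvec)$ via Theorem~\ref{sparsenum}, and the parity identity are precisely the content of Lemmas~\ref{small-ell}, \ref{useful}, \ref{Kexpectation} and~\ref{parity}. (One omission there: when $n\le S\le n\log n$ and $\card\zvec>S/3$, Theorem~\ref{sparsenum} cannot be applied to $\dvec-D\zvec$ at all, and a separate crude counting bound, as in \eqref{earlier}, is needed to show that those terms, and hence the upper tail of $Y_D$, are negligible.)

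The genuine gap is in the variances, in both parts. In part (i) you propose to get $\Var(Y_D)$ by ``summing $\ell$ and $\ell^2$'' against a distribution known only to relative error $O(n^{-b})$. That determines $\E(Y_D^2)$ and $\E(Y_D)^2$ each to absolute accuracy $O(n^{-b}\bar\ell_D^{\,2})$ only; under \eqref{dcondition} one has $\bar\ell_D=\Omega(n/\log n)$, so this uncertainty is $\Omega(n^{2-b}/\log^2 n)$, which, since $b<\frac12$, dwarfs the quantity being computed, $\Var(Y_D)=O(n)$. The subtraction $\E(Y_D^2)-\E(Y_D)^2$ therefore proves nothing; the paper avoids exactly this cancellation with the pairwise formula \eqref{varsum}, which converts a uniform pointwise relative error of the point probabilities into a relative error of the variance itself. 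The same defect is fatal to your factorial-moment plan in part (ii): each count $G(\dvec-D\zvec)$ carries its own $\exp(O(\dmax^3/S))$ error factor from Theorem~\ref{sparsenum}, so $\E(Y_1(Y_1-1))$ and $\E(Y_1)^2$, both of order $S$, are each pinned down only to additive accuracy $O(\dmax^3)$, whereas the theorem requires $\Var(Y_1)\approx\sqrt S$ to accuracy $O(\dmax^3/S^{1/2})$, a factor $\sqrt{S}$ finer than any such subtraction can deliver. You correctly diagnose that the non-factorizing ``interaction'' terms shift the variance by an additive constant (indeed by $\frac12$ relative to $\PB(\pvec')$), but ``carrying the bookkeeping'' through factorial moments cannot resolve that constant. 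What is needed, and what the paper does, is to show that $\Prob(Y_1=\ell)$ is proportional, with \emph{uniform} relative error $e^{O(\dmax^3/S)}$, to the explicit weighted distribution $\Prob(X=\ell)f(\ell)$ restricted to the correct parity, where $f$ is the explicit quartic polynomial of \eqref{Kexpand}; then \eqref{varsum} makes the error enter multiplicatively in the variance, and the main term is evaluated exactly through the moments and cumulants of $X$ (the quantities $M_0,M_1,M_2$ of Section~\ref{s:distributionproof}). An analogous weighted approximation, $\Prob(X=\ell)(1+\ell^2/S)$, is needed to reach the stated precision for $\E(Y_2)$ and $\Var(Y_2)$. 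Without this device, or an equivalent one, the moment claims of the theorem are out of reach of your argument.
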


The parameter $\mu_D$ can be thought of as measuring the
density of entries equal to~1, while $Y_D/n$ is the density
of loops in a randomly chosen element of $\mathcal{G}_D(\dvec)$.
In the dense range of Theorem~\ref{distribution} we see that $Y_2/n$
is concentrated near the same value $\mu_2$, while $Y_1/n$ is
concentrated near 
\[ \frac{\bar{\ell}_1}{n} = 
       \frac{\sqrt{\mu_1}}{\sqrt{\mu_1} + \sqrt{1-\mu_1}}.\]
Figure~\ref{densities} illustrates this curious difference between
$D=1$ and~$D=2$.

\begin{figure}[ht]
\unitlength=1cm
\begin{picture}(16,7.5)(0,0.5)
\put(4,0.3){\includegraphics[scale=0.4]{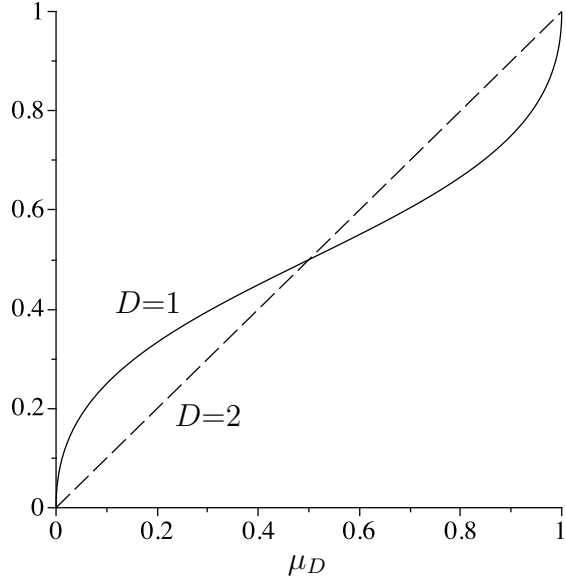}}
\put(6.5,2){$D{=}2$}
\put(5.7,3.5){$D{=}1$}
\put(8,0.1){$\mu_D$}
\end{picture}
\label{densities}
\caption{The expected density of the diagonal as a function of the overall
density $\mu_D$.}
\end{figure}

When $D=1$, Theorem~\ref{distribution} tells
us that the most significant term in $\E(Y_D)$ depends
only on $S$ and not on $\dvec$, within the range of
$\dvec$ values allowed by the theorem.
To explore this further, let $\A_n(S)$ be 
the set of all $n\times n$
symmetric 0-1 matrices with exactly $S$ entries equal to 1.
The number of matrices in $\A_n(S)$ with exactly
$\ell$ loops~is
\[ \binom{n}{\ell}\binom{\binom{n}{2}}{\frac{S-\ell}{2}}\]
when $S$ and $\ell$ have the same parity, and 0 otherwise.
For $1\le S\le n^2-1$, it can be proved that the maximum value of
this function occurs either at $\bar\ell_1$ rounded up to an integer
of the same parity as~$S$ or $\bar\ell_1$ rounded down to such an
integer.  On the basis of experiments, we conjecture that the mean
number of loops in $\A_n(S)$ always lies in $(\bar\ell_1-\frac12,
\bar\ell_1+\frac12)$.

Also note that $\bar\ell_1\sim \sqrt{S}$ for $S=o(n^2)$,
matching the leading term of $\E(Y_1)$ in the sparse case.

When $D=2$ we consider instead the set $\mathcal{B}_n(S)$ of
all graphs with loops allowed, with $n$ vertices and $S/2$ edges
(loops counting twice).
Matrices which correspond to graphs in $\mathcal{B}_n(S)$
can be formed by choosing $S/2$ entries on or below the main diagonal,
setting these equal to 1, then adding this matrix to its transpose.
(Nonzero entries on the diagonal all equal 2, which is their 
contribution to the row sum.)
The number of graphs in $\mathcal{B}_n(S)$ with exactly $\ell$
loops~is
\[ \binom{n}{\ell}\binom{\binom{n}{2}}{S/2 - \ell}.\]
Up to scaling, this is the hypergeometric distribution with parameters
$\(\binom{n+1}{2}, n, S/2\)$ and mean $S/(n+1)=\mu_2 n$.

The binomial distributions in part (i) of the theorem are asymptotically
normal, as is well known.  The Poisson binomial distributions in part (ii)
of the theorem are asymptotically normal for $Y_1$ (see~\cite{DPR}), and
asymptotically Poisson for $Y_2$, by Le Cam's Theorem~\cite{lecam} 
(see also~\cite[Equation 1.1]{BH}).

\nicebreak
\section{The dense case}\label{s:dense}

In this section we prove Theorem~\ref{densetheorem}.

\subsection{A technical lemma}\label{ss:dense-technical}

We will require a technical lemma which might be of some
independent interest.
If $\betavec=(\beta_1,\ldots,\beta_n)$ is a vector of real numbers
and $\ell=0,\ldots,n$, define
\[ U_\ell(\betavec) = \sum_{1\le j_1<\cdots<j_\ell\le n}
   \,\prod_{s=1}^\ell\, e^{\beta_{j_s}}.
\]

\begin{lemma}\label{Ul1}
  Define $\bar\beta=\frac 1n\sum_{j=1}^n\beta_j$ and
  suppose that $\beta_j-\bar\beta=\OO(n^{-1/2})$
  uniformly for $j=1,\ldots,n$.
  Then, for sufficiently small $\eps>0$, we have
 \[
    U_\ell(\betavec) = \binom{n}{\ell}
           \exp\biggl( \ell\bar\beta
             + \frac{\ell(n-\ell)}{2n^2}
             	\sum_{j=1}^n(\beta_j-\bar\beta)^2
            + \OO(n^{-1/2}) \biggr),
 \]
  uniformly for $\ell=0,\ldots,n$.
\end{lemma}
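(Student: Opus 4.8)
The plan is to recognize $U_\ell(\betavec)$ as the elementary symmetric polynomial $e_\ell$ evaluated at the points $x_j = e^{\beta_j}$, and to estimate it by comparing to the symmetric case where all $\beta_j$ equal $\bar\beta$. Writing $\beta_j = \bar\beta + \gamma_j$ with $\sum_j \gamma_j = 0$ and each $\gamma_j = \OO(n^{-1/2})$, I factor out the mean to get
\[
  U_\ell(\betavec) = e^{\ell\bar\beta}\, e_\ell\bigl(e^{\gamma_1},\ldots,e^{\gamma_n}\bigr).
\]
The target is thus to show $e_\ell(e^{\gamma_1},\ldots,e^{\gamma_n}) = \binom{n}{\ell}\exp\bigl(\frac{\ell(n-\ell)}{2n^2}\sum_j\gamma_j^2 + \OO(n^{-1/2})\bigr)$. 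First I would expand $e^{\gamma_j} = 1 + \gamma_j + \tfrac12\gamma_j^2 + O(\gamma_j^3)$ and track how the elementary symmetric polynomial responds to these perturbations, exploiting that $e_\ell(1,\ldots,1) = \binom{n}{\ell}$ is the base point.

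The cleanest route is via logarithmic derivatives: write $\log e_\ell$ as a function of the power sums $p_r = \sum_j \gamma_j^r$ and Taylor-expand around $\gamma = 0$. Since $\sum_j \gamma_j = 0$, the first-order term $p_1$ vanishes, so the leading correction is quadratic in the $\gamma_j$, governed by $p_2 = \sum_j \gamma_j^2$. I would compute the second-order coefficient explicitly — this is where the factor $\frac{\ell(n-\ell)}{2n^2}$ must emerge. Concretely, I expect to use Newton's identities (or a direct generating-function argument from $\prod_j(1 + t\,e^{\gamma_j})$, reading off the coefficient of $t^\ell$) to express $e_\ell$ through the $p_r$, then identify the quadratic form in the $\gamma_j$ whose symmetric-function value is $p_2$ times the stated combinatorial factor. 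The size bound $\gamma_j = \OO(n^{-1/2})$ ensures $p_2 = \OO(1)$ and that cubic and higher contributions, each carrying at least three factors of $\gamma_j$ summed symmetrically, are $\OO(n^{-1/2})$ and hence fold into the error term.

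The main obstacle will be controlling the higher-order terms uniformly in $\ell$ across the full range $\ell = 0,\ldots,n$, since the combinatorial weights in $e_\ell$ vary enormously and one must verify that no term blows up near $\ell\approx n/2$. The delicate point is that the natural expansion of $\log e_\ell$ involves ratios such as $e_{\ell-1}/e_\ell$, which need careful bounding; I would argue that after symmetrization the relevant sums reduce to power sums $p_r$ with coefficients that are bounded rational functions of $\ell/n$, so the $\OO(n^{-1/2})$ bound on each $\gamma_j$ translates into a uniform $\OO(n^{-1/2})$ error. I would handle this by passing through the normalized quantities $\binom{n}{\ell}^{-1}e_\ell$ and showing their logarithm is a convergent perturbation series in the $\gamma_j$ whose terms beyond the quadratic one are uniformly small; the symmetry in $\ell \leftrightarrow n-\ell$ visible in the factor $\ell(n-\ell)$ serves as a useful consistency check on the computation.
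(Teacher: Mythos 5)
Your reduction to $\bar\beta=0$ and your identification of the target quadratic term match the paper's starting point, but the core of your plan has a genuine gap precisely at the place you yourself flag as the main obstacle. Expressing $\log e_\ell$ through power sums via Newton's identities (or ``reading off the coefficient of $t^\ell$'' in $\prod_j(1+t\,e^{\gamma_j})$) does not yield a perturbation series whose terms are controlled uniformly in $\ell$: the exact expansion
\[
  \frac{e_\ell(1+u_1,\ldots,1+u_n)}{\binom{n}{\ell}}
   = \sum_{k=0}^{\ell}\frac{[\ell]_k}{[n]_k}\,e_k(u_1,\ldots,u_n),
   \qquad u_j=e^{\gamma_j}-1,
\]
has individual terms of size roughly $(\ell B)^k/k!$ with $B=\max_j\abs{\gamma_j}=\OO(n^{-1/2})$, so for $\ell$ of order $n$ the terms grow until $k\approx n^{1/2}$ and the sum is controlled only through massive cancellations coming from $\sum_j\gamma_j=0$; no term-by-term bound of the form ``coefficients are bounded rational functions of $\ell/n$'' is available, and the analogous difficulty appears in the Newton recursion $k e_k=\sum_i(-1)^{i-1}e_{k-i}P_i$, where the relative perturbations of $P_i$ grow like $i^2/n$ and are enormous for $i$ of order $n$. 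Your error analysis is also too coarse: it is not true that every contribution carrying at least three factors of $\gamma_j$ is $\OO(n^{-1/2})$ --- for example $\(\sum_j\gamma_j^2\)^2=\OO(1)$, and ruling such a term out of the exponent requires showing its coefficient is $O(n^{-1})$, i.e.\ establishing a cumulant-type cancellation that your plan never supplies.

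The paper resolves exactly this point with a different tool: for $n^{1/2-c\eps}\le\ell\le n-n^{1/2-c\eps}$ it extracts the coefficient of $y^\ell$ in $\prod_j(1+e^{\beta_j}y)$ by the saddle point method --- Cauchy's integral over a circle of radius $r=\ell/(n-\ell)$, a Gaussian approximation of the integrand near $\theta=0$, and exponential decay of the tails --- which produces the factor $\exp\bigl(\frac{\ell(n-\ell)}{2n^2}\sum_j\beta_j^2\bigr)$ with a uniform $\OO(n^{-1/2})$ error. A series expansion of the kind you propose is used in the paper only in the range $0\le\ell<n^{1/2-c\eps}$, where it succeeds because there the quadratic term is itself $\OO(n^{-1/2})$ and compositions containing a part equal to $1$ gain a factor $O(\ell/n)$ from $\sum_j\beta_j=0$; the remaining range $\ell>n-n^{1/2-c\eps}$ follows from the symmetry $U_\ell(\betavec)=U_{n-\ell}(-\betavec)$. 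Without the contour-integral step (or an equivalently strong substitute, such as a rigorous Gaussian estimate for the exponential moment of a sum sampled without replacement), your plan cannot be completed in the central range of $\ell$.
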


\begin{proof}
The factor $e^{\ell\bar\beta}$
can be removed by replacing each $\beta_j$
by $\beta_j-\bar\beta$, so it suffices to prove the lemma
for $\sum_{j=1}^n\beta_j=\bar\beta=0$.

We divide the proof into three parts, depending on~$\ell$.
Let $B=\max_j\abs{\beta_j}$.  Choose a constant $c\ge 0$ such that
$Bn^{1/2-c\eps}=o(1)$.

First assume that $n^{1/2-c\eps}\le \ell\le n- n^{1/2-c\eps}$.
Since $U_\ell(\betavec)$ is the coefficient of $y^\ell$ in
$\prod_{j=1}^n (1 + e^{\beta_j} y)$, we can estimate it using the
saddle point method.  We choose the contour to be a circle of
radius $r$ centered at the origin, where
\[   r = \frac{\ell}{n-\ell}.  \]
For $j=1,\ldots, n$ let
\[ \psi_j = \frac{e^{\beta_j}r}{1+e^{\beta_j}r}. 
\]
Changing variable according to $y=re^{i\theta}$ and applying Cauchy's
theorem, we obtain 
\[ U_\ell(\betavec) = P(\betavec) \int_{-\pi}^{\pi} F(\theta)\,d\theta, \]
where
\[
   P(\betavec) = \frac{\prod_{j=1}^n (1+re^{\beta_j})}{2\pi r^\ell},
   \qquad
   F(\theta) = \frac{\prod_{j=1}^n\(1+\psi_j(e^{i\theta}-1)\)}{e^{i\ell\theta}}.
\]
The coefficient $\psi_j$ satisfies
\begin{align}
   \psi_j &= \frac{\ell}{n} + \frac{\ell(n-\ell)}{n^2} \, \beta_j
         + \frac{\ell(n-\ell)(n-2\ell)}{2n^3}\, \beta_j^2
               + \OO(\beta_j^3)\label{muj-equation}\\
	&= \frac{\ell}{n}\(1+\OO(n^{-1/2})\). \notag
\end{align}
We now divide the domain of integration into the two subdomains
$\abs\theta\le\theta_0$ and $\abs\theta>\theta_0$, where
\[
    \theta_0 = \sqrt{\frac{n}{\ell(n-\ell)}}\,\log n.
\]
Expanding $F(\theta)$ for $\abs\theta\le\theta_0$, we find 
using (\ref{muj-equation}) that
\begin{align*}
  F(\theta) &= \exp\biggl( -i\ell\theta + i\sum_{j=1}^n\psi_j\theta
   - \dfrac12\sum_{j=1}^n\psi_j(1-\psi_j)\,\theta^2 \\
   &{\kern20mm}- \dfrac16 i \sum_{j=1}^n\psi_j(1-\psi_j)(1-2\psi_j)\,\theta^3
   + O\(\ell(n-\ell)n^{-1}\theta_0^4\) \biggr) \\
  &=
  \exp\biggl( - \frac{\ell(n-\ell)}{2n} \theta^ 2
      + O(1)\,\frac{i \ell(n-\ell)}{n}\theta^3 + \OO(n^{-1/2}) \biggr),
\end{align*}
where the $O(1)$ term is independent of $\theta$.
Since the interval $\abs\theta\le\theta_0$ is symmetric about~0,
we can instead integrate
\[
  \dfrac12\(F(-\theta)+F(\theta)\)
   = \exp\biggl( - \frac{\ell(n-\ell)}{2n}\, \theta^ 2 + \OO(n^{-1/2}) \biggr).
\]
Furthermore,
\[ \int_{\theta_0}^\infty 
   \exp\biggl( - \frac{\ell(n-\ell)}{2n}\, \theta^ 2 + \OO(n^{-1/2}) \biggr) \, 
              d\theta
	      = n^{-\Omega(\log n)}\]
(and similarly for the lower tail) and hence
\[
   \int_{-\theta_0}^{\theta_0} F(\theta)\,d\theta
     = \sqrt{\frac{2\pi n}{\ell(n-\ell)}}\,\exp\(\OO(n^{-1/2})\).
\]
For the complementary subdomain $\abs\theta>\theta_0$, note that
\[ \abs{1+\psi_j(e^{i\theta}-1)} = \sqrt{1-2\psi_j(1-\psi_j)(1-\cos\theta)}\,, \]
which is a decreasing function for $\theta\in(\theta_0,\pi)$.
Therefore, $\abs{F(\theta)}\le\abs{F(\theta_0)}$ for $\abs\theta>\theta_0$.
Since $1-\cos y \geq 2y^2/\pi^2$ when $-\pi\leq y\leq \pi$,
we have
\begin{align*}
\abs{F(\theta_0)} &= \prod_{j=1}^n \sqrt{1-2\psi_j(1-\psi_j)(1-\cos\theta_0)}\\
  &\leq \exp\biggl( - \frac{2\log^2 n}{\pi^2 }  + \OO(n^{-1/2}) \biggr)\\
   &= n^{-\Omega(\log n)}.
\end{align*}
Hence 
\[
   \int_{-\pi}^{\pi} F(\theta)\,d\theta
  = 
    n^{-\Omega(\log n)}
   + \int_{-\theta_0}^{\theta_0} F(\theta)\,d\theta
     = \sqrt{\frac{2\pi n}{\ell(n-\ell)}}\,\exp\(\OO(n^{-1/2})\).
\]

Finally, we calculate that
\begin{align*}
   P(\betavec) &= \frac{n^n}{2\pi\ell^\ell(n-\ell)^{n-\ell}}
      \prod_{j=1}^n \frac{1+re^{\beta_j}}{1+r} \\
   &=
    \frac{n^n}{2\pi\ell^\ell(n-\ell)^{n-\ell}}
      \exp\biggl( \frac{\ell(n-\ell)}{2n^2}\sum_{j=1}^n\beta_j^2 
            + \OO(n^{-1/2}) \biggr).
\end{align*}
Therefore
\[
   U_\ell(\betavec)
    = \frac{n^{n+1/2}}{\sqrt{2\pi}\,\ell^{\ell+1/2}(n-\ell)^{n-\ell+1/2}}
       \exp\biggl( \frac{\ell(n-\ell)}{2n^2}\sum_{j=1}^n\beta_j^2
            + \OO(n^{-1/2}) \biggr),
\]
which equals the expression in the lemma, by Stirling's formula.

\medskip
We next consider the case that $0\le\ell<n^{1/2-c\eps}$.
Expand $U_\ell(\betavec)=\sum_{s\ge 0} T_s/s!$, where
\[
    T_s = \sum_{1\le j_1<\cdots<j_\ell\le n} 
               (\beta_{j_1}+\cdots+\beta_{j_\ell})^s.
\]
It follows from~\cite[Lemma 5]{CM} that
\begin{equation}\label{Tsmall}
T_0 = \binom{n}{\ell}, \quad T_1 = 0,\quad 
  T_2 = \binom{n}{\ell}\, O(\ell B^2) \quad \text{ and } \quad
  T_3 = \binom{n}{\ell}\, O(\ell B^3).
\end{equation}
We proceed to bound $T_s$ for $s\geq 4$.
Let $\sumppd_{j_1,\ldots, j_\ell}$ denote the sum over all sequences
$(j_1,\ldots,j_\ell)\in \{ 1,\ldots, n\}^\ell$
with $\ell$ distinct entries.
Applying the multinomial theorem, we have
\[
    T_s = \frac{1}{\ell!} \sum_{m_1+\cdots+m_\ell=s}
          \binom{s}{m_1,\ldots,m_\ell} B(m_1,\ldots,m_\ell),
\]
where
\[
   B(m_1,\ldots,m_\ell) = 
    \sumppd_{j_1,\ldots,j_\ell}
       \beta_{j_1}^{m_1}\cdots\beta_{j_\ell}^{m_\ell}.
\]

Let $\M_1$ be the set of all compositions
$\mvec=(m_1,\ldots,m_\ell)$  of $s$ such that $m_i=1$ for some~$i$,
and let $\M_2$ be the set of all other compositions of $s$.
For all $\mvec$ we have
\[ \abs{B(\mvec)} \le [n]_\ell B^s, \]
using the falling factorial.
For $\mvec\in\M_1$, suppose as a representative case that $m_\ell=1$.
Then 
\begin{align*}
  B(\mvec) &= \sumppd_{j_1,\ldots,j_\ell}
      \beta_{j_1}^{m_1}\cdots\beta_{j_\ell}^{m_\ell} \\
  &= \sumppd_{j_1,\ldots,j_{\ell-1}}
	\beta_{j_1}^{m_1}\cdots\beta_{j_{\ell-1}}^{m_{\ell-1}}
	  \sum_{j_\ell\notin\{j_1,\ldots,j_{\ell-1}\}} \beta_{j_\ell} \\
  &= -\sumppd_{j_1,\ldots,j_{\ell-1}}
        \beta_{j_1}^{m_1}\cdots\beta_{j_{\ell-1}}^{m_{\ell-1}}
	  \sum_{j_\ell\in\{j_1,\ldots,j_{\ell-1}\}} \beta_{j_\ell},
\end{align*}
where the last step uses the assumption $\sum_{j=1}^n\beta_j=0$.
This shows that for $\mvec\in\M_1$ we have
\[ \abs{B(\mvec)} \le \ell [n]_{\ell-1} B^s = O(\ell/n) [n]_\ell B^s. \]
Consequently
\[
  \abs{T_s} \le \binom{n}{\ell} \,B^s\, \biggl(
   O(\ell/n) \sum_{\mvec\in\M_1} \binom{s}{m_1,\ldots,m_\ell}
          + \sum_{\mvec\in\M_2} \binom{s}{m_1,\ldots,m_\ell}
   \biggr).
\]
Furthermore
\[
   \sum_{\mvec\in\M_1} \binom{s}{m_1,\ldots,m_\ell}\le \ell^s.\]
Next, notice that for any fixed integer $s\geq 4$,
\[
    C_s = \sum_{\mvec\in\M_2} \binom{s}{m_1,\ldots,m_\ell}
\]
is the coefficient of $x^s$ in the Maclaurin expansion of $s!\,(e^x-x)^\ell$.
Since that expansion has nonnegative coefficients,
$C_s\le s!\,\eta^{-s}(e^\eta-\eta)^\ell$ for any $\eta>0$.
Substituting $\eta=\sqrt{s/\ell}$ and using the fact that 
$(e^{\sqrt{x}} - \sqrt{x})^{1/x} < 2$ for $x>0$ gives
\[
   C_s \le s!\, (s/\ell)^{-s/2} 
            \( e^{\sqrt{s/\ell}}-\sqrt{s/\ell}\,\)^\ell
    \le s!\,\(2\sqrt{\ell/s}\,\)^s.
    \]
Hence we have, for any fixed integer $s\geq 4$,
\begin{equation}\label{S2}
\abs{T_s} \leq \binom{n}{\ell}\Bigl( O(\ell/n)\, \ell^s 
                + s!\,\(2\sqrt{\ell/s}\,\)^s\Bigr).
\end{equation}
Using \eqref{Tsmall} for $s\le 3$ and~\eqref{S2} for $s\ge 4$,
gives
\[  U_\ell(\betavec) = \binom{n}{\ell}
    \Bigl(1 + \OO(n^{-1/2})
       + O(\ell/n)\sum_{s\ge 4} \frac{1}{s!}B^s\ell^s
       + O(1)\sum_{s\ge 4} B^s\(2\sqrt{\ell/s}\,\)^s \Bigr). 
\]
Since $B\ell=o(1)$ and $B\sqrt{\ell} = \OO(n^{-1/4})$, 
the first sum in the above expression is $O(\ell/n) = \OO(n^{-1/2})$,
while the second sum is at most
\[ \sum_{s\geq 4} B^s\, \ell^{s/2} = O(B^4 \ell^2) = \OO(n^{-1}).\]
Hence
\[
 U_\ell(\betavec) = \binom{n}{\ell} \(1 + \OO(n^{-1/2})\),
\]
which matches the lemma for this range of~$\ell$ values.

For the remaining range $n-n^{1/2-c\eps}<\ell\le n$, we can apply the
identity $U_\ell(\betavec) = U_{n-\ell}(-\betavec)$, which is a
consequence of $\sum_j\beta_j=0$.
The lemma is thus proved.
\end{proof}

\nicebreak
\subsection{Proof of the dense theorem (Theorem~\ref{densetheorem})}
\label{ss:dense-proof}

Suppose that $a,b>0$ are constants such that $a+b<\frac12$, and
$\dvec$ is such that \eqref{dcondition} holds and $d_j-d$ is uniformly 
$O(n^{1/2+\eps})$ for $j=1,\ldots,n$ and some $\eps>0$.
In the following, we will assume that $\eps$ is sufficiently small.
Later in the proof we will infer that we can take $\eps=\eps(a,b)$ for some
function $\eps(a,b)>0$, as required by Theorem~\ref{densetheorem}.

Every vector $\zvec = (z_1,\ldots, z_n)\in \{ 0,1\}^n$ is a potential
diagonal of one of our matrices.   Define $\abs\zvec=\sum_{j=1}^n z_j$
and for $\ell=0,\ldots,n$ let 
\begin{equation} \Lambdait_\ell = \{ \zvec\in \Lambdait\,\, : 
               \,\, \abs\zvec  = \ell\}.
\label{lambda-ell-def}
\end{equation}
If $D\ell$ and $S$ have the same parity then
\begin{equation}
\label{dense-strategy}
   G_D(\dvec,\ell) = \sum_{\zvec\in\Lambdait_\ell} G(\dvec-D\zvec).
   \end{equation}
We proceed by applying Theorem~\ref{densenum}
to estimate $G(\dvec-D\zvec)$ and then summing the result over
all $\zvec\in\Lambdait_\ell$.   
Note that the average entry of $\dvec-D\zvec$ is $d - D\ell/n$. 

Let $\widehat{a}$ be
any constant such that $a < \widehat{a} < \nfrac{1}{2} -b$
and let $\varepsilon_0 = \varepsilon_0(\widehat{a},b)$ be the positive constant
guaranteed by Theorem~\ref{densenum}.
Then for $\ell=0,\ldots, n$ we have
\[ \min\,\biggl\{d-\frac{D\ell}{n},\, n - d + \frac{D\ell}{n} \biggr\} \geq 
              \frac{n}{3\widehat{a}\,\log n}\]
for sufficiently large $n$.
Provided $\eps\le\eps_0$, we have that 
$(d_j - Dz_j) - (d - D\ell/n)$ is uniformly $O(n^{1/2+\varepsilon_0})$ 
for $j=1,\ldots, n$.
So Theorem~\ref{densenum} with the constants $(\widehat{a},b)$
applies to every vector $\dvec - D\zvec$, using the value 
$\varepsilon_0 =\varepsilon_0(\widehat{a},b)$ guaranteed by that theorem.

Next we will compare factors from the expression for $G(\dvec - D\zvec)$
given by (\ref{Gdense}) with corresponding factors from
the formula for $G_D(\dvec,\ell)$ given in Theorem~\ref{densetheorem}.
Let $\lambda_\ell$ denote the density of $\dvec-D\zvec$ for any
$\zvec\in\Lambdait_\ell$.  That is,
\[   \lambda_\ell = \frac{d}{n-1} - \frac{D\ell}{n(n-1)}
           = \mu_D - \frac{D(\ell-\mu_D n)}{n(n-1)}. \]
Also let $\delta_j = d_j - d$ for $j = 1,\ldots, n$,
which allows us to write $R = \sum_{j=1}^n \delta_j^2$.
Then
\[
 \frac
  {\(\lambda_\ell^{\lambda_\ell}
    (1-\lambda_\ell)^{1-\lambda_\ell}\)^{\binom n2}}
  {\(\mu_D^{\mu_D} (1-\mu_D)^{1-\mu_D}\)^{\binom n2+Dn/2}}
  =
  \mu_D^{-D\ell /2}(1-\mu_D)^{-D(n-\ell)/2}
  \exp\biggl( \frac{D^2(\ell-\mu_D n)^2}{4\mu_D(1-\mu_D)n^2} + \OO(n^{-1}) \biggr).
\]
Using the expansion $[m]_k = m^k \exp\left(-\frac{k(k-1)}{2m} + O(k^3/m^2)\right)$,
valid when $m\to\infty$ such that $k = o(m^{2/3})$, we find that
\begin{align*}
\frac{\displaystyle\binom{n{-}1}{d_j{-}Dz_j}}
       {\displaystyle\binom{n{+}D{-}1}{d_j}}
         = \mu_D^{Dz_j}(1-\mu_D)^{D(1-z_j)}
  \exp\biggl(&
    {-\frac{D(D-1)(\mu_D-z_j)^2}{2\mu_D(1-\mu_D)n}}\\[-2.5ex]
     &{\kern-3mm}- \frac{D(\mu_D-z_j)\delta_j}{\mu_D(1-\mu_D)n} 
   -\frac{D(\mu_D-z_j)^2\delta_j^2}{2\mu_D^2(1-\mu_D)^2n^2}
      + \OO(n^{-3/2})
   \biggr).
\end{align*}
Since $z_j^2=z_j$ and $\sum_{j=1}^n z_j = \ell$, we obtain
\begin{align*}
 \prod_{j=1}^n
  \frac{\displaystyle\binom{n{-}1}{d_j{-}Dz_j}}
       {\displaystyle\binom{n{+}D{-}1}{d_j}}
 &= \mu_D^{D\ell}(1-\mu_D)^{D(n-\ell)} \notag\\[-2.5ex]
 &{\kern5mm}\times \exp\biggl( -\frac{D(D-1)}{2}
     - \frac{D(D-1)(1-2\mu_D)(\ell-\mu_D n)}{2\mu_D(1-\mu_D)n}
     - \frac{DR}{2(1-\mu_D)^2n^2}\notag\\
     &{\kern21mm}+ 
        \sum_{j=1}^n\biggl(
       \frac{D\delta_j}{\mu_D(1-\mu_D)n} 
      - \frac{D(1-2\mu_D)\delta_j^2}{2\mu_D^2(1-\mu_D)^2n^2} \biggr)z_j
     + \OO(n^{-1/2})
     \biggr).
\end{align*}
Finally, apart from the $O(n^{-b})$ error term,
the expression inside the exponential in~\eqref{Gdense} for $\dvec-D\zvec$  is
\[
   \frac14-\frac{\(\,\sum_{j=1}^n (d_j-Dz_j-\lambda_\ell(n-1))^2\)^2}
     {4\lambda_\ell^2(1-\lambda_\ell)^2n^4}
   = \frac14-\frac{R^2}{4\mu_D^2(1-\mu_D)^2n^4} + \OO(n^{-1/2}).
   \]
Combining these expressions gives
\begin{equation}
\label{dense-answer}
 G(\dvec-D\zvec) = A\,\,  V(\ell)
                    \, \exp\biggl( \sum_{j=1}^n \beta_j z_j\biggr)
\end{equation}
where
\begin{align}
  A &= \sqrt{2}\, \(\mu_D^{\mu_D}\, (1-\mu_D)^{1-\mu_D}\)^{\binom{n}{2} + Dn/2}
            \,\prod_{j=1}^n \binom{n{+}D{-}1}{d_j}
   \exp\( O(n^{-b}) + \OO(n^{-1/2}) \), \label{indep}\\
  V(\ell) &=  \mu_D^{\ell D/2}(1-\mu_D)^{(n-\ell)D/2}
      \exp\biggl(\frac{1}{4} - \frac{D(D-1)}{2} - 
  \frac{D(D-1)(1-2\mu_D)(\ell-\mu_D n)}{2\mu_D(1-\mu_D)n} \notag \\
  & \hspace*{47mm} {}  +
   \frac{D^2(\ell-\mu_D n)^2}{4\mu_D(1-\mu_D)n^2} - \frac{DR}{2(1-\mu_D)^2 n^2}
         - \frac{R^2}{4\mu_D^2(1-\mu_D)^2 n^4}
            \biggr), \label{Vdef} \notag\\
    \beta_j &= \frac{D\delta_j}{\mu_D(1-\mu_D)n}
              - \frac{D(1-2\mu_D)\delta_j^2}{2\mu_D^2(1-\mu_D)^2n^2}
       \qquad  \mbox{ for $j=1,\ldots, n$}. \notag
\end{align}

Next we must sum over all $\zvec\in\Lambdait_\ell$. 
Note that $\beta_j = \OO(n^{-1/2})$ for $j=1,\ldots, n$,
and the average of $\beta_1,\ldots, \beta_n$ is
\[ \bar\beta = -\frac{D(1-2\mu_D)R}{2\mu_D^2(1-\mu_D)^2 n^3} = \OO(n^{-1}).\]
Hence Lemma~\ref{Ul1} applies and shows that
\begin{align}
 \sum_{\zvec\in\Lambdait_\ell}  \exp\biggl(\sum_{j=1}^n\beta_jz_j\biggr)
  &= \binom{n}{\ell}\,\exp\biggl(
 \ell\bar\beta+\frac{\ell(n-\ell)}{2n^2}
   \sum_{j=1}^n (\beta_j-\bar\beta)^2 + \OO(n^{-1/2})\biggr) \notag\\
    &= \binom{n}{\ell}\, \exp\biggl(\frac{D^2\ell(n-\ell)R}{2\mu_D^2(1-\mu_D)^2n^4}
       - \frac{D(1-2\mu_D)\ell R}{2\mu_D^2(1-\mu_D)^2n^3}
       + \OO(n^{-1/2})\biggr).
\label{betabit}
\end{align}
Combining (\ref{dense-strategy}) and (\ref{dense-answer})--(\ref{betabit}) gives
\begin{equation}
   G_D(\dvec,\ell) = A\, \binom{n}{\ell} \mu_D^{\ell D/2}
              (1-\mu_D)^{(n-\ell)D/2}\, \exp\( Q_D(\dvec,\ell) + \OO(n^{-1/2})
  \)
\label{ell-equation}
\end{equation}
where $Q_D(\dvec,\ell)$ is defined in the statement of Theorem~\ref{densetheorem}
for $D\in\{1,2\}$.

Next we will estimate $G_D(\dvec)$ by
summing (\ref{ell-equation}) over allowable values of $\ell$.
Recall the definition of $\bar\ell_D$ given in the theorem statement.
Ignoring the factor $A$
which is independent of $\ell$, we calculate
\begin{align}
 &  \sum_{\ell = 0}^n \binom{n}{\ell}\mu_D^{D\ell/2}(1-\mu_D)^{D(n-\ell)/2}
            \exp\(Q_D(\dvec,\ell) + \OO(n^{-1/2})\) \notag \\
 &=  (1-\mu_D)^{Dn/2}\,
    \sum_{\ell = 0}^n \binom{n}{\ell}\biggl(\frac{\mu_D}{1-\mu_D}\biggr)^{\!D\ell/2}
            \exp\(Q_D(\dvec,\bar\ell_D) +
              \OO(n^{-1}(\ell-\bar\ell_D) + n^{-1/2})\) \notag \\
 &=  (1-\mu_D)^{Dn/2}\, \exp(Q_D(\dvec,\bar\ell_D))\,
    \sum_{\ell = 0}^n \binom{n}{\ell}\biggl(\frac{\mu_D}{1-\mu_D}\biggr)^{\!D\ell/2}
            \exp\(
              \OO(n^{-1}(\ell-\bar\ell_D) + n^{-1/2})\). \label{binexp}
\end{align}
If $\abs{\ell - \bar\ell_D}\leq n^{1/2+\eta}$ for some constant
$\eta>0$ then the error term in the corresponding summand is $\OO(n^{-1/2})$,
so these summands are essentially terms from a binomial expansion.
If $\abs{\ell - \bar\ell_D} > n^{1/2 + \eta}$ then
\begin{equation}\label{bintail}
  \binom{n}{\ell} \biggl(\frac{\mu_D}{1-\mu_D}\biggr)^{D\ell/2}
         \leq \exp(-\Omega(n^{2\eta})),\qquad
   \exp\(\OO(n^{-1}(\ell - \bar\ell_D))\) = \exp(\OO(1)),
\end{equation}
so the contribution from the tails of the sum is negligible.
Therefore
\begin{align} 
 \sum_{\ell = 0}^n \binom{n}{\ell} &
  \biggl(\frac{\mu_D}{1-\mu_D}\biggr)^{\!D\ell/2}
   \, \exp\biggl(\OO\biggl(\frac{\ell-\bar \ell_D}{n} + n^{-1/2}\biggr)\biggr)
            \notag\\
   &= 
   \exp(\OO(n^{-1/2}))\,
   \sum_{\ell = 0}^n \binom{n}{\ell} 
  \biggl(\frac{\mu_D}{1-\mu_D}\biggr)^{\!D\ell/2}
     \label{uniform}\\
  &= \biggl(1 + \biggl(\frac{\mu_D}{1-\mu_D}\biggr)^{\!D/2\,}\biggr)^{\!n} 
                          \exp\(\OO(n^{-1/2})\).  \label{uniform2}
\end{align}

The preceding calculations
hold for any sufficiently small $\eps>0$, so in particular
they hold for some $\eps=\eps(a,b)$ such that $\eps\le\eps_0$
and the $\OO(n^{-1/2})$ error terms in
(\ref{indep}), (\ref{ell-equation}) and (\ref{uniform2}) are all~$O(n^{-b})$.
Then the claimed formulae for 
$G_D(\dvec,\ell)$
follow immediately from (\ref{indep}) and~(\ref{ell-equation}).

Furthermore, multiplying (\ref{uniform2}) by
$A\, (1-\mu_D)^{Dn/2}\, \exp(Q_D(\dvec,\bar\ell_D))$ using (\ref{indep})
and substituting $D=2$
gives the desired formula for $G_2(\dvec)$. 

For $D=1$, we must sum over only those values of $\ell$ with
the same parity as $S$.  That is, we must replace (\ref{uniform})
with a sum over just the even (or just the odd) values of $\ell$.
By standard properties of the binomial distribution, the parity-restricted
sum is half the full sum, within additive error
$O(n^{-b})$, say.  (This also follows from Lemma~\ref{parity} when
$\mu_1\neq \nfrac{1}{2}$, and hence when $\mu_1=1/2$ by analytic continuation.)
The additive error can be absorbed into the relative error in (\ref{uniform2}),
since the main factor there is $\Omega(1)$.
This gives the desired formula for $G_1(\dvec)$, 
completing the proof.
\qed

\section{The sparse case}\label{s:sparse}

In this section we prove Theorem~\ref{sparsetheorem}.

\subsection{Some useful results}\label{ss:sparse-technical}

First, we present two lemmas involving the Poisson binomial
distribution, which we introduced in Section~\ref{ss:distribution}.
Let $\pvec=(p_1,\ldots,p_n)$ satisfy $0\leq p_1,\ldots, p_n\leq 1$
and let $X$ be a random variable with distribution~$\PB(\pvec)$.
The mean of $X$ is $\bar{X} = \E(X) = \sum_{j=1}^n p_j$.
The following tail bounds are standard.

\begin{lemma}
\label{Chernoff}
If $X$ is a Poisson binomial random variable then,
for any $s\ge 0$, we have
\begin{align*}
 \Pr(X - \bar{X} \leq -s) &\leq 
   \exp\biggl(- \frac{s^2}{2\bar{X}}\biggr),\\
\Pr\(X- \bar{X} \geq s\) &\leq 
       \exp\biggl( -\bar{X} \varphi\biggl(\frac{s}{\bar{X}}\biggr)\biggr),
\end{align*}
where $\varphi(x)=(1+x)\log(1+x)-x$.
\end{lemma}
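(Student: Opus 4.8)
The plan is to use the standard exponential moment (Bernstein--Chernoff) method, exploiting the fact that a Poisson binomial variable is a sum of independent indicators, so that its moment generating function factorizes. Write $X=\sum_{j=1}^n X_j$ with the $X_j$ independent as in the definition of $\PB(\pvec)$, so that $\bar X=\sum_{j=1}^n p_j$. Throughout I will use only the elementary inequality $1+x\le e^x$ together with the independence of the $X_j$, so the argument is entirely self-contained.

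For the upper tail I would start from Markov's inequality applied to $e^{tX}$ for a parameter $t>0$:
\[
   \Pr(X-\bar X\ge s)\le e^{-t(\bar X+s)}\,\E(e^{tX}).
\]
By independence, $\E(e^{tX})=\prod_{j=1}^n\bigl(1+p_j(e^t-1)\bigr)$, and $1+x\le e^x$ gives $\E(e^{tX})\le\exp\bigl(\bar X(e^t-1)\bigr)$. Substituting and minimizing over $t$ — the optimal choice is $t=\log(1+s/\bar X)$ — collapses the exponent to exactly $-\bar X\varphi(s/\bar X)$ with $\varphi(x)=(1+x)\log(1+x)-x$, which is the claimed bound. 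This computation is routine, so I would display only the optimal substitution and its value.

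For the lower tail I would run the same argument with $e^{-tX}$, $t>0$, obtaining
\[
   \Pr(X-\bar X\le -s)\le\exp\bigl(\bar X(e^{-t}-1)+t(\bar X-s)\bigr).
\]
Minimizing (the optimizer satisfies $e^{-t}=1-s/\bar X$, which is admissible precisely when $0\le s<\bar X$) yields an exponent equal to $\bar X\bigl[-y-(1-y)\log(1-y)\bigr]$ with $y=s/\bar X$. Rather than leave this in the sharp form, I would pass to the cleaner Gaussian bound stated in the lemma by invoking the auxiliary inequality $(1-y)\log(1-y)+y\ge y^2/2$ on $[0,1)$, which turns the exponent into $\le-s^2/(2\bar X)$. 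The boundary range $s\ge\bar X$ is handled separately and trivially: since $X\ge 0$, we have $X-\bar X\ge-\bar X$ always, so $\Pr(X-\bar X\le-s)=0$ once $s>\bar X$, and the bound holds vacuously.

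There is no genuine obstacle here, as the result is entirely standard; the only care required is (a) confirming that the optimizing $t$ for the lower tail lands in the admissible range $0\le s<\bar X$ and disposing of the complementary range as above, and (b) verifying the one auxiliary inequality used to replace the sharp rate function by the Gaussian form. That inequality is itself immediate: the difference $(1-y)\log(1-y)+y-y^2/2$ vanishes at $y=0$ and has derivative $-\log(1-y)-y\ge 0$ on $[0,1)$, hence is nonnegative.
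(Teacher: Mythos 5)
Your proof is correct, but it takes a different route from the paper in one structural sense: the paper does not prove this lemma at all, it simply cites it as standard (Janson--\L uczak--Ruci\'nski, Theorems 2.1 and 2.8, and Chung--Lu, Theorem 3.2, where exactly these bounds appear). What you have written out is, in effect, the standard exponential-moment derivation underlying those cited theorems: Markov's inequality applied to $e^{\pm tX}$, factorization of the moment generating function by independence, the bound $1+x\le e^x$ to reduce to the Poisson-type MGF $\exp\(\bar X(e^{\pm t}-1)\)$, optimization in $t$ giving the rate function $\varphi$ for the upper tail and $-y-(1-y)\log(1-y)$ for the lower tail, and finally the comparison $(1-y)\log(1-y)+y\ge y^2/2$ to obtain the Gaussian form. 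All of these steps check out, including the verification of the auxiliary inequality. What your approach buys is self-containment: the lemma no longer rests on external references, at the cost of about a page of routine computation that the paper deliberately outsources.

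One pinhole worth patching: in the lower tail you dispose of the range $s\ge\bar X$ by noting $\Pr(X-\bar X\le -s)=0$ when $s>\bar X$, but at the boundary point $s=\bar X$ the probability is $\Pr(X=0)=\prod_j(1-p_j)$, which need not vanish. This is closed in one line: either note that $\Pr(X=0)\le e^{-\bar X}\le e^{-\bar X/2}$ (take $t\to\infty$ in your bound), or invoke left-continuity of $s\mapsto\Pr(X\le\bar X-s)$ together with continuity of the bound to pass to the limit from $s<\bar X$.
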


\begin{proof}
These bounds are attributed to Chernoff, see
\cite[Theorems 2.1 and 2.8]{JLR}
and~\cite[Theorem 3.2]{CL}.
\end{proof}

\begin{lemma}
\label{technical}
Let $X$ be a random variable with Poisson binomial
distribution~$\PB(\pvec)$ and mean $\bar{X} \le n/(\log n)^2$.
For a fixed constant $C>0$, let
$f:\Reals\rightarrow\Reals$ be a function such that
\[ \abs{f(x)} \leq C\biggl(\frac{x^2}{n}  +
         \frac{\abs x}{n^{1/2}}\biggr)
          \quad \text{ for } \quad \abs x\leq n.\]
Then
\begin{align}
 \E\(\exp(f(X-\bar{X}))\) &= 1 + \E\(f(X-\bar{X})\) + 
     O\(\E(f(X-\bar{X})^2\,)\) + n^{-\Omega(\log n)} \notag \\
   &= \exp\Bigl(\E\(f(X-\bar{X})\) + O\(\E(f(X-\bar{X})^2\,)\)
      + n^{-\Omega(\log n)} \Bigr).
\label{first}
\end{align}
In particular, the contribution to this expectation from values of
$X$ with $\abs{X-\bar{X}} > n^{1/2}$ is
\begin{equation}
\sum_{\ell,\, \abs{\ell - \bar{X}} > n^{1/2}}  \Pr(X=\ell) \exp\(f(\ell - \bar{X})\)
  = n^{-\Omega(\log n)}.
\label{contribution}
\end{equation}
\end{lemma}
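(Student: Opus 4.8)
The plan is to split the expectation according to whether $X$ lies in the \emph{core} range $\abs{X-\bar{X}}\le n^{1/2}$ or in the complementary tail. On the core the hypothesis on $f$ gives $\abs{f(X-\bar{X})}\le 2C=O(1)$ (each of the two terms in the bound on $f$ being at most $1$ there), so the elementary estimate $e^u=1+u+O(u^2)$, with implied constant depending only on $2C$, applies pointwise and yields $e^{f(X-\bar{X})}=1+f(X-\bar{X})+O\(f(X-\bar{X})^2\)$ on that range. Taking expectations over the core and re-attaching the tail, the first identity of the lemma reduces to two claims: that extending each of $\E(f(X-\bar{X}))$ and $\E(f(X-\bar{X})^2)$ from the core to the whole distribution costs only $n^{-\Omega(\log n)}$, and that the tail contribution to $\E(e^{f})$ is $n^{-\Omega(\log n)}$. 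The former is routine, since $\abs f=O(n)$ on the tail while the tail probability is $n^{-\Omega(\log n)}$ (the $f\equiv0$ case of the analysis below); the latter is exactly assertion~\eqref{contribution}. Hence everything turns on \eqref{contribution}.

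To prove \eqref{contribution} I would bound the upper and lower tails separately using Lemma~\ref{Chernoff}, aiming to show that the Chernoff exponent dominates $f$. Since $X$ is a sum of $n$ indicators, $\abs{X-\bar{X}}\le n$ always, so the hypothesis on $f$ applies throughout and $f(\pm s)\le C\(s^2/n+s/n^{1/2}\)$ for $0\le s\le n$. The decisive input is $\bar{X}\le n/(\log n)^2$. For the lower tail, Lemma~\ref{Chernoff} gives decay $\exp\(-s^2/(2\bar{X})\)$, and $s^2/(2\bar{X})\ge s^2(\log n)^2/(2n)$ exceeds $2f(-s)$ for large $n$ because $(\log n)^2/2$ dominates $C$; already at the threshold $s=n^{1/2}$ this gives $s^2/(2\bar{X})\ge\tfrac12(\log n)^2$, so each of the at most $n$ lower-tail terms contributes $n^{-\Omega(\log n)}$ and the lower tail is immediate.

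The delicate part, which I expect to be the main obstacle, is the upper tail when $s$ is of order $n$: there $f(s)$ may be as large as $C(s^2/n+s/n^{1/2})=O(n)$, so $e^{f(s)}$ can reach $e^{O(n)}$ and must be overwhelmed by the super-Gaussian decay of $\varphi$. I would split at $s=3\bar{X}$. For $n^{1/2}<s\le3\bar{X}$ Bennett's bound $\varphi(x)\ge x^2/(2+\tfrac23 x)$ gives a Chernoff exponent at least $s^2/(4\bar{X})\ge s^2(\log n)^2/(4n)$, which dominates $f(s)$ (again because $(\log n)^2/4$ beats $C$) and is $\Omega\((\log n)^2\)$ at $s=n^{1/2}$, increasing thereafter. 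For $s>3\bar{X}$ I would instead use $\varphi(x)\ge\tfrac12 x\log(1+x)$ (valid for $x=s/\bar{X}\ge1$), giving a Chernoff exponent at least $\tfrac12 s\log(s/\bar{X})$ and hence a net exponent $s\(\tfrac12\log(s/\bar{X})-Cs/n-C/n^{1/2}\)$. Here a short case check shows the bracket is bounded below by a positive constant: when $s$ is small compared to $n$ the term $Cs/n$ is negligible while $\log(s/\bar{X})\ge\log3$, and when $s$ is comparable to $n$ the hypothesis $\bar{X}\le n/(\log n)^2$ forces $s/\bar{X}=\Omega\((\log n)^2\)$, so $\log(s/\bar{X})=\Omega(\log\log n)$ swamps the bounded quantity $Cs/n$. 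Thus the net exponent is $\Omega(s)=\Omega(n^{1/2})\gg(\log n)^2$. Summing the at most $n$ upper-tail terms preserves $n^{-\Omega(\log n)}$, which together with the lower tail establishes \eqref{contribution}.

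Finally, assembling the core expansion with these negligible tail corrections gives the first displayed identity. For the exponential form, write $v=\E(f(X-\bar{X}))$ and $w=\E\(f(X-\bar{X})^2\)$; both are $O(1)$, being dominated by the core where $\abs f=O(1)$, and $v^2\le w$ by Cauchy--Schwarz. Since $\E(e^{f})\ge e^{v}=\Omega(1)$ by Jensen's inequality, the left side is bounded away from zero, so the identity $1+v+O(w)=\exp\(v+O(w)\)$ (valid for bounded $v,w$ with positive, bounded-below left side, using $v^2\le w$ to absorb the quadratic error) converts the first form into the second, completing the proof.
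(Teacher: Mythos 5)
Your proof is correct, and its skeleton matches the paper's: both split at $\abs{X-\bar X}=n^{1/2}$, use $\abs{f}\le 2C$ on the core so that $e^{f}=1+f+O(f^2)$ pointwise (the paper packages this as a bound on $g=e^f-1-f$), and kill the two tails via Lemma~\ref{Chernoff}, the lower tail being immediate from the Gaussian-type bound. The genuine divergence is in the upper tail, which you rightly single out as the delicate step. The paper observes that $\bar X\varphi(s/\bar X)$ is decreasing in $\bar X$, substitutes the worst case $\bar X=n/\log^2 n$, and then shows by calculus that the combined exponent $L_1(s)=Cs^2/n+Cs/n^{1/2}-\frac{n}{\log^2 n}\varphi(s\log^2 n/n)$ attains its maximum over $[n^{1/2},n]$ at the left endpoint (its derivative is negative at both endpoints and $L_1'''>0$, so $L_1'<0$ throughout), where $L_1(n^{1/2})=-\frac12\log^2 n+O(1)$. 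You instead split the upper tail at $s=3\bar X$ and invoke two standard lower bounds on $\varphi$: the Bernstein-type bound $\varphi(x)\ge x^2/(2+\frac23 x)$, which for $x\le 3$ reduces the moderate range to the same Gaussian computation as the lower tail, and $\varphi(x)\ge\frac12 x\log(1+x)$ in the far range, where your two-case check ($s$ small relative to $n$, versus $s$ comparable to $n$, which forces $s/\bar X=\Omega(\log^2 n)$) makes the bracket bounded below by a positive constant, so the net exponent is $-\Omega(s)=-\Omega(n^{1/2})$. Both routes are valid: the paper's is self-contained but needs the monotonicity observation and a third-derivative argument, while yours is more modular, leaning on known inequalities for $\varphi$, though to be fully rigorous you should fix an explicit cutoff (e.g.\ $s\le\epsilon n$ with $C\epsilon<\frac14\log 3$) in place of ``small compared to $n$''. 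Your closing conversion of the additive form of \eqref{first} into the exponential form, using $v^2\le w$ (Cauchy--Schwarz) and $\E\bigl(e^{f(X-\bar X)}\bigr)=\Omega(1)$ (Jensen), is also sound and makes explicit a step the paper leaves implicit.
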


\begin{proof}
Define $g(x) = e^{f(x)} - 1 - f(x)$.  
Note that
 $\abs{g(x)} \leq e^{\abs{f(x)}} $
for all $x$, which implies that
\begin{equation}
\abs{g(x)} \leq \exp\biggl(\frac{C\, x^2}{n} + \frac{C\, \abs x}{n^{1/2}}\biggr)
          \quad \text{ for } \quad \abs{x}\leq n.
\label{Deltabound}
\end{equation}
We write
\[ \E\(g(X-\bar{X})\) = \Sigma_1 + \Sigma_2 + \Sigma_3\]
where
\begin{align*}
\Sigma_1 &= 
           \sum_{\ell,\, \abs{\ell - \bar{X}} \leq n^{1/2}} \Pr(X = \ell)\, g(\ell - \bar{X}),\\
\Sigma_2   &= \sum_{\ell,\, \ell - \bar{X} > n^{1/2}} \Pr(X = \ell)\, g(\ell - \bar{X}),\\
\Sigma_3 &= \sum_{\ell,\, \ell - \bar{X} < -n^{1/2}} \Pr(X = \ell)\, g(\ell - \bar{X}).
\end{align*}
In each of these sums, $\ell$ is a nonnegative integer in $\{ 0,\ldots, n\}$ which
satisfies the additional constraint given.  We now bound these three sums in turn.

For $\Sigma_1$, note that when
$\abs{\ell - \bar{X}}\leq n^{1/2}$ we have  
$f(\ell - \bar{X})\leq 2C = O(1)$.   Hence 
\[ g(\ell - \bar{X}) = O\( f(\ell - \bar{X})^2 \)\]
uniformly for all $\ell$ in this range.
It follows that
$\sum_{\ell,\, \abs{\ell - \bar{X}} \leq n^{1/2}} \Pr(X = \ell)\, f(\ell - \bar{X})^2
 \leq \E(f(X-\bar{X})^2)$, and hence
\begin{equation}
\label{firstcase}
 \Sigma_1 =  O\(\E(f(X - \bar{X})^2)\).
\end{equation}

Now we consider $\Sigma_2$.  
Since $\bar X\varphi(s/\bar X)$ is a decreasing function of $\bar X$,
and $\bar{X}\leq n/(\log n)^2$ by assumption, Lemma~\ref{Chernoff}
shows that 
\[ \Pr\(X- \bar{X} \geq s\) \leq 
       \exp\biggl( -\frac{n}{\log^2 n} 
              \varphi\biggl(\frac{s\log^2 n}{n}\biggr)\biggr).\]
Applying (\ref{Deltabound}) shows that  $\Sigma_2$ is bounded above by
\[
   n \, \max_{n^{1/2} < s \leq n} \exp\( L_1(s) \),
\]
where
\[ L_1(s) = \frac{Cs^2}{n} + \frac{Cs}{n^{1/2}}
             -
       \frac{n}{\log^2 n} \,
       \varphi \biggl(\frac{s\log^2 n}{n}\biggr).
\]
Now 
\[ L_1'(s) = \frac{2Cs}{n} + \frac{C}{n^{1/2}} - 
        \log\biggl(1 + \frac{s\log^2 n}{n}\biggr),\]
which is negative for sufficiently large $n$ for
$s\in\{n^{1/2},n\}$.
Also $L_1'''(s) >0$ for all $s\ge 0$, so it must
be that $L_1'(s)<0$ for $n^{1/2}\le s\le n$ when $n$ is 
sufficiently large.  
It follows that the maximum of $L_1$ on the interval
$[n^{1/2},n]$ occurs at $s=n^{1/2}$.  Since
$L_1(n^{1/2}) = -\nfrac{1}{2}\log^2 n + O(1)$,
we deduce that 
\begin{equation}
\label{secondcase}
\Sigma_2 = n\, \exp\( -\Omega(\log^2 n)\) = n^{-\Omega(\log n)}.
\end{equation}

A bound on $\Sigma_3$ can be obtained similarly.
Using the first bound in Lemma~\ref{Chernoff}, we find
\[ \Sigma_3 \le n \max_{n^{1/2}< s \leq n} \exp\(L_2(s)\) ,\]
where
\[ L_2(s) = \frac{C s^2}{n} + \frac{C s}{n^{1/2}} -\frac{ s^2\log^2n}{2n}.\]
By the same argument as before, the maximum of $L_2(s)$ occurs
at $s=n^{1/2}$ for sufficiently large~$n$, and we
conclude that $\Sigma_3 = n^{-\Omega(\log n)}$, 
which together with (\ref{secondcase}) implies (\ref{contribution}). 
Combining (\ref{contribution}) and (\ref{firstcase})
establishes (\ref{first}), completing the proof.
\end{proof}

For a given function $f:\{0,1,\ldots, n\}\to\Reals$, define the
polynomial $\hat{f}:\Reals^n\rightarrow\Reals$ by
\[
\hat{f}(y_1,\ldots, y_n) = \sum_{(x_1,\ldots, x_n)\in \{ 0,1\}^n}
   f(x_1 + \cdots + x_n)\, \prod_{j=1}^n y_j^{x_j}\, (1-y_j)^{1-x_j}.
\]
(Note that this is indeed a polynomial in $y_1,\ldots, y_n$,
since $1-x_j\in\{ 0,1\}$.)
In the case that $0\le y_1,\ldots,y_n\le 1$, we have
\begin{equation}
\label{fhatEY}
 \hat{f}(y_1,\ldots, y_n) = \E\(f(Y)\),
\end{equation}
where $Y$ is a random variable with distribution $\PB\((y_1,\ldots,y_n)\)$.

The following lemma will be used when $D=1$ to handle the
parity restriction on the number of loops.

\begin{lemma}
\label{parity}
Fix $(p_1,\ldots, p_n)\in [0,1]^n$ such that $p_j\neq \nfrac{1}{2}$
for $j=1,\ldots, n$.  Define
\[ r_j = -\frac{p_j}{1-2p_j}\]
for $j=1,\ldots, n$, and let
\[ Z = \prod_{j=1}^n \,(1 - 2p_j).\]
Then for $\rho = 0,\, 1$,
\begin{align*}
\sum_{\stackrel{(x_1,\ldots, x_n)\in \{0,1\}^n}
                   {x_1 + \cdots + x_n \equiv \rho\!\!\pmod{2}}}
   f(x_1 + \cdots & + x_n)\,  \prod_{j=1}^n \,p_j^{x_j}\, (1-p_j)^{1-x_j}\\
   &	= \nfrac{1}{2} \, \hat{f}(p_1,\ldots, p_n) + 
		       (-1)^\rho\,\nfrac{1}{2} \, Z\, \,
              \hat{f}(r_1,\ldots, r_n).
\end{align*}
\end{lemma}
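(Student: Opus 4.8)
The plan is to convert the parity restriction into an algebraic weight using the elementary identity that, for $\rho\in\{0,1\}$, the quantity
\[
   \tfrac12\bigl(1 + (-1)^\rho (-1)^{x_1+\cdots+x_n}\bigr)
\]
equals $1$ when $x_1+\cdots+x_n\equiv\rho\pmod 2$ and $0$ otherwise. I would substitute this factor into the left-hand side and split the result into two sums according to the two terms. The sum coming from the ``$1$'' term is precisely $\tfrac12\hat f(p_1,\ldots,p_n)$ by the definition of $\hat f$ and \eqref{fhatEY}. It then remains to identify the sum coming from the ``$(-1)^{\sum x_j}$'' term with $(-1)^\rho\,\tfrac12 Z\,\hat f(r_1,\ldots,r_n)$.

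For that second sum I would absorb the sign $(-1)^{\sum x_j}=\prod_{j=1}^n(-1)^{x_j}$ into the product over $j$, turning each Bernoulli factor $p_j^{x_j}(1-p_j)^{1-x_j}$ into $(-p_j)^{x_j}(1-p_j)^{1-x_j}$. The key observation is that this modified two-point weight can again be written in Bernoulli form $c_j\,y_j^{x_j}(1-y_j)^{1-x_j}$ for a constant $c_j$ and parameter $y_j$ independent of $x_j$. Matching the two values $x_j=0$ and $x_j=1$ gives the linear equations $c_j(1-y_j)=1-p_j$ and $c_jy_j=-p_j$; adding them forces $c_j=1-2p_j$, and hence $y_j=-p_j/(1-2p_j)=r_j$. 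This yields the factor-by-factor identity
\[
   (-p_j)^{x_j}(1-p_j)^{1-x_j} = (1-2p_j)\, r_j^{x_j}(1-r_j)^{1-x_j},
\]
whose product over $j$ extracts the global constant $\prod_{j=1}^n(1-2p_j)=Z$ and leaves $\hat f(r_1,\ldots,r_n)$ behind. Combining the two parts gives the stated formula, with the sign $(-1)^\rho$ tracking the two choices of parity.

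The argument is essentially a short computation, so I do not expect a serious obstacle; the only subtle point is the reparametrization $r_j$, and this is exactly where the hypothesis $p_j\ne\tfrac12$ is used. If $p_j=\tfrac12$ then $c_j=1-2p_j=0$ and $r_j$ is undefined, so the odd-weight factor cannot be renormalized to a probability weight and $Z=0$ would degenerate. Granting $p_j\ne\tfrac12$, the remaining work is purely verifying the displayed factor identity at $x_j\in\{0,1\}$ and multiplying, which the lemma's definitions of $r_j$ and $Z$ make straightforward.
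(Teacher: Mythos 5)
Your proposal is correct and is essentially the paper's own argument: the paper extracts the parity restriction via $\tfrac12\bigl(P(w)+(-1)^\rho P(-w)\bigr)$ for the probability generating function $P(w)=\prod_{j=1}^n(1-p_j+p_jw)$ and then uses the factorization $1-p_j-p_jw=(1-2p_j)(1-r_j+r_jw)$, which is exactly the generating-function form of your indicator $\tfrac12\bigl(1+(-1)^\rho(-1)^{x_1+\cdots+x_n}\bigr)$ combined with your factor-by-factor identity $(-p_j)^{x_j}(1-p_j)^{1-x_j}=(1-2p_j)\,r_j^{x_j}(1-r_j)^{1-x_j}$. Your termwise phrasing also correctly exploits the fact that $\hat f$ is defined as a polynomial, so evaluating it at the (possibly negative) arguments $r_j$ is legitimate, just as in the paper.
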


\begin{proof}
Let $X$ be a random variable with Poisson binomial
distribution $\PB\((p_1,\ldots,p_n)\)$.
The probability generating function for $X$ is
\[ P(w) = \sum_{t=0}^n w^t \Pr(X=t) = \prod_{j=1}^n \,(1 - p_j + p_j w).\]
Note that
\begin{equation}
\label{fextract}
 \hat{f}(p_1,\ldots, p_n) = \sum_{t=0}^n f(t)\, \Pr(X=t) = 
                                   \sum_{t=0}^n f(t)\, [w^t] P(w).
\end{equation}
Now
\[ P(-w) = \prod_{j=1}^n \,(1-p_j-p_jw) = Z\, \prod_{j=1}^n \,(1-r_j + r_j w).\]
This expression has the same algebraic form as $P(w)$, but with $r_j$
in place of $p_j$ for $j=1,\ldots, n$.  Therefore, by comparison with
(\ref{fextract}) we have
\[ \sum_{t=0}^n f(t)\, [w^t] P(-w) = Z\, \hat{f}(r_1,\ldots, r_n).\]
Hence we calculate that
\begin{align*}
 \sum_{\stackrel{(x_1,\ldots, x_n)\in\{ 0,1\}^n}
                 {x_1+\cdots + x_n \equiv\rho\!\! \pmod{2}}}
	f(x_1 + \cdots & + x_n)\,  \prod_{j=1}^n \, p_j^{x_j}(1-p_j)^{1-x_j}\\
	 &= \sum_{\stackrel{t=0,\ldots, n}{t\equiv \rho\!\! \pmod{2}}} 
	                  f(t)\Pr(X=t) \displaybreak[0]\\
	 &= \sum_{\stackrel{t=0,\ldots, n}{t\equiv \rho\!\! \pmod{2}}} f(t)\, 
	                         [w^t] \(\nfrac{1}{2} P(w) 
				          + (-1)^\rho \nfrac{1}{2} P(-w)\) \displaybreak[0]\\
	 &= \sum_{t=0}^n f(t)\, 
	                         [w^t] \(\nfrac{1}{2} P(w) 
				          + (-1)^\rho \nfrac{1}{2} P(-w)\)\\
         &= \nfrac{1}{2}\hat{f}(p_1,\ldots, p_n) + 
	         \nfrac{1}{2}(-1)^\rho\, Z\, \hat{f}(r_1,\ldots, r_n),
\end{align*}
as claimed.
\end{proof}

\subsection{Proof of the sparse theorem (Theorem~\ref{sparsetheorem})}
\label{ss:proofs-sparse}

We now prove Theorem~\ref{sparsetheorem}.
Assume throughout this section that $1\leq \dmax = o(S^{1/3})$
and that $S$ is even if $D=2$. Furthermore, note that
deleting vertices of degree zero does not affect either
the value of $G_D(\dvec)$ or the formulae for it given in
Theorem~\ref{sparsetheorem}. Hence we assume without loss of
generality that $d_j\geq 1$ for $j=1,\ldots, n$.

Let
\[
 H(\dvec) = 
 \sqrt{2}\, \biggl(\frac{S}{e}\biggr)^{\!S/2}
  \, \biggl(\prod_{j=1}^n d_j!\biggr)^{\!-1}
  \exp\biggl( -\frac{S_2}{2S} - \frac{S_2^2}{4S^2} - \frac{S_2^2 S_3}{2S^4}
   + \frac{S_2^4}{4S^5} + \frac{S_3^2}{6S^3}\biggr).
   \]
Using Stirling's approximation, Theorem~\ref{sparsenum} can be restated
as follows: when $S$ is even and $1\leq \dmax = o(S^{1/3})$, then
\begin{equation}
G(\dvec) = H(\dvec)\exp\( O(\dmax^3/S)\),
\label{MWexpression}
\end{equation}
uniformly as $S\to\infty$.  
We proceed to estimate $G_D(\dvec)/H(\dvec)$.

Define
\begin{align*}
\Lambdait^{(1)} &= \{ \zvec\in\{ 0,1\}^n :
                \text{ for $j=1,\ldots, n$, if $d_j < D$ then $z_j = 0$}\},\\ 
\Lambdait^{(2)} &= \{ \zvec\in \{0,1\}^n :
                     \card{\zvec} \equiv S \!\!\pmod{2}\}
\end{align*}
and let
\[ \Lambdait= \begin{cases} 
   \Lambdait^{(1)}\cap\Lambdait^{(2)} & \text{ if $D=1$, }\\[0.6ex]
\Lambdait^{(1)} & \text{ if $D=2$.}
\end{cases}
\]
(Recall that $\card{\zvec}$
denotes the number of entries of $\zvec$ equal to 1.)
Then
\begin{equation}
\label{strategy}
 G_D(\dvec) = H(\dvec) \,
          \sum_{\zvec\in\Lambdait} 
      \frac{G(\dvec - D\zvec)}{H(\dvec)}.
\end{equation}
Our strategy is to compare the ratio 
$G(\dvec - D\zvec)/H(\dvec)$ to
the ratio $H(\dvec - D\zvec)/H(\dvec)$,
which we now investigate.   

\begin{lemma}
\label{small-ell}
For $j=1,\ldots,n$, define
\[ a_j = \frac{[d_j]_D}{S^{D/2}}\, \exp( \Deltait +  \gamma_j ), \]
where
\[ \gamma_j = - \frac{D(D+1)}{2S} - \frac{D(D+2)S_2}{2S^2} 
    - \frac{DS_2^2}{2S^3} 
        + \biggl( \frac{D}{S} + \frac{D S_2}{S^2}\biggr) d_j
\] 
for $j=1,\ldots, n$,
and $\Deltait$ is defined by
\[   \Deltait = \begin{cases}
        \frac{1}{2S^{1/2}} - \frac{S_2}{2S^2} & \text{ if $D=1$,}\\
        0 & \text{ if $D=2$.}\end{cases}
\]
Define $K(\zvec)$ by
\begin{equation}
\label{answer}
 \frac{H(\dvec - D\zvec)}{H(\dvec)}
        = 
  \exp\( K(\zvec) \) 
            \prod_{j=1}^n a_j^{z_j}.
\end{equation}
Then there are functions $K',K'': \{0,1,\ldots,n\}\to\Reals$ 
which satisfy
\begin{equation}
\label{Kdef}
K'(\ell),K''(\ell) =  - \Deltait \ell +
\frac{D^2\ell^2}{4S} + \frac{D^3\ell^3}{12 S^2}
 + O\biggl(
          \frac{\dmax^2 \ell^2}{S^2} + \frac{\ell^4}{S^3}
         + \frac{\dmax^3}{S} \biggr)
\end{equation}
such that
\begin{equation}
\label{Kbounds}
  K'(\card{\zvec}) \le K(\zvec) \le K''(\card{\zvec}) 
\end{equation}
for all $\zvec\in\Lambdait$ with $\card\zvec\le S/3$.
\end{lemma}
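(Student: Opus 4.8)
The plan is to compute $\log\bigl(H(\dvec-D\zvec)/H(\dvec)\bigr)$ directly and exactly, as a sum of contributions coming from the individual factors in the definition of $H$, and then to subtract off the per-vertex pieces that the product $\prod_j a_j^{z_j}$ is built to capture. Writing $\ell=\card\zvec$ and $\sigma_k=\sum_{j=1}^n z_j d_j^{\,k}$, the substitution $d_j\mapsto d_j-Dz_j$ changes the three basic sums by
\[
  \Delta S=-D\ell,\qquad
  \Delta S_2=-2D\sigma_1+D(D+1)\ell,\qquad
  \Delta S_3=-\sum_{j=1}^n z_j\bigl([d_j]_3-[d_j-D]_3\bigr),
\]
where the summand in $\Delta S_3$ is a degree-two polynomial in $d_j$. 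First I would treat the prefactor $(S/e)^{S/2}$: expanding $\tfrac12 S'\log S'-\tfrac12 S'$ with $S'=S-D\ell$ and $\log(1-D\ell/S)$ (a convergent expansion because $\card\zvec\le S/3$ forces $D\ell/S\le\tfrac23$) gives, after the $\tfrac{S}{2}\log S$ terms cancel,
\[
  -\frac{D\ell}{2}\log S+\frac{D^2\ell^2}{4S}+\frac{D^3\ell^3}{12S^2}+O\!\Bigl(\frac{\ell^4}{S^3}\Bigr).
\]
The factorial prefactor $\bigl(\prod_j d_j!\bigr)^{-1}$ contributes exactly $\sum_j z_j\log[d_j]_D$, since $d_j!/(d_j-D)!=[d_j]_D$; this matches the leading factor of $a_j$.

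Next I would expand the five exponential corrections of $H$ under the same substitution, writing each change through $\Delta S,\Delta S_2,\Delta S_3$ and the base values $S,S_2,S_3$. The resulting terms separate into three kinds. The \emph{per-vertex} terms are those linear in a single $z_j$ (using $z_j^2=z_j$) whose coefficient is a function of $d_j$ and of the global quantities $S,S_2$; collecting the parts of these coefficients that are constant or affine in $d_j$ reproduces precisely $\sum_j z_j\gamma_j$, and I would verify the stated formula for $\gamma_j$ by matching coefficients at this point. Together with $-\tfrac{D\ell}{2}\log S$ and $\sum_j z_j\log[d_j]_D$, these account for all of $\sum_j z_j\log a_j$ except the term $\ell\Deltait$; the factor $\Deltait$ is placed in $a_j$ for later distributional reasons (so that $a_j$ matches the per-vertex probability $p'_j$ of Theorem~\ref{distribution}), and so subtracting $\sum_j z_j\log a_j$ leaves $-\Deltait\ell$ explicitly in $K(\zvec)$. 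After this cancellation, $K(\zvec)$ equals the $\ell$-only expression $-\Deltait\ell+\tfrac{D^2\ell^2}{4S}+\tfrac{D^3\ell^3}{12S^2}$ plus a remainder $E(\zvec)$ assembled from (i) genuinely multilinear pieces such as $\sigma_1^2/S^2$ arising from squares like $(\Delta S_2)^2$, (ii) higher-degree-in-$d_j$ per-vertex pieces involving $\sigma_2,\sigma_3$ that $\gamma_j$, being only affine in $d_j$, cannot absorb, and (iii) lower-order $\ell$-only pieces of size $\ell^2/S^2$ or smaller.

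The final and most delicate step is to squeeze $E(\zvec)$ between two functions of $\ell$ alone, which is exactly what defines $K'$ and $K''$. Here I would use $1\le d_j\le\dmax$, so that $\ell\le\sigma_k\le\dmax^{\,k}\ell$, together with $S_3=O(\dmax^2 S)$ and $\card\zvec\le S/3$. The multilinear pieces are controlled by $\sigma_1^2/S^2\le\dmax^2\ell^2/S^2$, producing the $\dmax^2\ell^2/S^2$ error, while the truncations past cubic order in $\ell$ give the $\ell^4/S^3$ error. The subtle contributions are leftover per-vertex pieces such as $\tfrac{S_3}{S^3}\sigma_2=O(\dmax^4\ell/S^2)$, which I would bound by splitting on whether $\ell\ge\dmax^2$ or $\ell<\dmax^2$: the first regime folds $\dmax^4\ell/S^2$ into $\dmax^2\ell^2/S^2$, and the second into $\dmax^6/S^2=o(\dmax^3/S)$ using $\dmax=o(S^{1/3})$, which is precisely what yields the constant $\dmax^3/S$ error. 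Taking the infimum and supremum of $E(\zvec)$ over all $\zvec$ with $\card\zvec=\ell$ then defines $K'(\ell)$ and $K''(\ell)$ of the common shape in~\eqref{Kdef}. I expect this last step to be the main obstacle: the hard part is organising the many second- and third-order terms so that the portion depending on the detailed choice of $\zvec$ (through $\sigma_1,\sigma_2,\sigma_3$ and the multilinear forms) is provably absorbed into the single uniform error in~\eqref{Kdef}, rather than the lengthy but routine coefficient-matching that pins down $\gamma_j$ and $\Deltait$.
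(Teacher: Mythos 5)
Your proposal is correct and follows essentially the same route as the paper: you expand the ratio $H(\dvec-D\zvec)/H(\dvec)$ factor by factor, with the $(S/e)^{S/2}$ and factorial prefactors producing $\frac{D^2\ell^2}{4S}+\frac{D^3\ell^3}{12S^2}+O(\ell^4/S^3)$ and $\prod_{j}\bigl([d_j]_D/S^{D/2}\bigr)^{z_j}$, while the five exponential correction terms (the paper's $M(\dvec,\zvec)$, handled via the substitutions $S_1(\zvec)=S-D\ell$, $S_2(\zvec)=S_2-2DW_1+D(D+1)\ell$, $S_3(\zvec)=S_3-3DW_2+3D(D+1)W_1-D(D+1)(D+2)\ell$) yield $\sum_j\gamma_jz_j$ plus a $\zvec$-dependent remainder of size $O(\dmax^3/S+\dmax^2\ell^2/S^2)$, so that $-\Deltait\ell$ enters only through the definition of $a_j$. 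Your explicit inf/sup construction of $K'$ and $K''$ over $\zvec$ with $\card\zvec=\ell$, and your uniform bounding of the leftover multilinear and higher-degree per-vertex pieces, is exactly the (more tersely stated) content of the paper's proof.
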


\begin{proof}
Define the function
\begin{align*}
M(\dvec,\zvec) &=
           -\frac{S_2(\zvec)}{2S_1(\zvec)} + \frac{S_2}{2S}
       - \frac{S_2(\zvec)^2}{4S_1(\zvec)^2} +
       \frac{S_2^2}{4S^2}
       - \frac{S_2(\zvec)^2S_3(\zvec)}{2S_1(\zvec)^4}
       + \frac{S_2^2S_3}{2S^4}  \\
       & \hspace*{1cm} {}
       + \frac{S_2(\zvec)^4}{4S_1(\zvec)^5} -
       \frac{S_2^4}{4S^5}
       + \frac{S_3(\zvec)^2}{6S_1(\zvec)^3} -
       \frac{S_3^2}{6S^3},
\end{align*}
where $S_r(\zvec) = \sum_{j=1}^n [d_j-Dz_j]_r$ for $r= 1, 2, 3$.
Then 
\begin{align*}
\label{eq1}
\frac{H(\dvec - D\zvec)}{H(\dvec)}
  &= 
  \exp({M(\dvec,\zvec)})\,
\biggl(\frac{e}{S}\biggr)^{\!D\ell/2} \, 
  \biggl(1 - \frac{D\ell}{S}\biggr)^{\!(S-D\ell)/2}\, 
       \prod_{j=1}^n \,([d_j]_{D})^{z_j} \\
  &= 
  \exp({M(\dvec,\zvec)})\,
 \exp\biggl(\frac{D^2\ell^2}{4S} + \frac{D^3\ell^3}{12 S^2}
   + O\biggl(\frac{\ell^4}{S^3}\biggr)\biggr)\,
\prod_{j=1}^n \biggl(\frac{[d_j]_D}{S^{D/2}}\biggr)^{\!z_j}.
\end{align*}
Now
\begin{align*}
S_1(\zvec) &= S - D\ell,\\
S_2(\zvec)  &= S_2  - 2D W_1  + D(D+1)\ell,  \\
S_3(\zvec)  &= S_3 - 3D W_2 + 3D(D+1)W_1 - D(D+1)(D+2) \ell,
\end{align*}
where $W_r = \sum_{j=1}^n [d_j]_r \, z_j$ for $r=1,2$.
Making these substitutions gives
\[
M(\dvec,\zvec)
    = 
 O\biggl(\frac{\dmax^3}{S} + \frac{\dmax^2 \ell^2}{S^2}
                  \biggr) +  
 \sum_{j=1}^n \gamma_j z_j.  
\]
Since the terms involving $\Deltait$ cancel, this completes the proof.
The lemma is in fact true for any $\Deltait$, but the value we have chosen
will be useful in proving Lemma~\ref{Kexpectation}.
\end{proof}

We now calculate some important quantities which will be needed later.

\begin{lemma}
\label{useful}
When $D=1$,
\begin{align*}
\sum_{j=1}^n \frac{a_j}{1 + a_j} &=  \sqrt{S} - \frac{1}{2} +
            \frac{1}{8S^{1/2}} - \frac{S_2}{S} + \frac{2S_2}{S^{3/2}}
        + \frac{S_3}{S^{3/2}} + \frac{S_2^2}{2S^{5/2}}
     + O\biggl(\frac{\dmax^3}{S}\biggr), \\
\sum_{j=1}^n  \log(1 + a_j ) &=  
         \sqrt{S} - \frac{1}{24S^{1/2}} -\frac{S_2}{2S} 
   + \frac{S_2}{2S^{3/2}} + \frac{S_3}{3S^{3/2}} 
          + \frac{S_2^2}{2S^{5/2}}
            + O\biggl(\frac{\dmax^3}{S}\biggr).
\end{align*}
When $D= 2$,
\begin{align*} \sum_{j=1}^n \frac{a_j}{1+a_j} &= 
    \frac{S_2}{S} \exp\(O(\dmax^2/S)\),\\
  \sum_{j=1}^n \log(1+a_j) &= 
    \frac{S_2}{S}\exp\(O(\dmax^2/S)\).
\end{align*}
\end{lemma}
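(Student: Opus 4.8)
The plan is to reduce both sums to the power sums $T_r=\sum_{j=1}^n a_j^r$ for small $r$. Since
$\frac{a_j}{1+a_j}=a_j-a_j^2+a_j^3-\cdots$ and $\log(1+a_j)=a_j-\tfrac12a_j^2+\tfrac13a_j^3-\cdots$, each target is a fixed combination of $T_1,T_2,T_3$ plus a tail. First I would note that $a_j=o(1)$ uniformly (indeed $a_j=O(\dmax/S^{1/2})$ for $D=1$ and $a_j=O(\dmax^2/S)$ for $D=2$, using $\dmax=o(S^{1/3})$), so both series converge geometrically and the tail $\sum_{r\ge4}(\pm)T_r$ is $O(T_r)=O(T_4)$. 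Writing $M_m=\sum_j d_j^m$ and recording $M_1=S$, $M_2=S+S_2$, $M_3=S+3S_2+S_3$ together with the crude bound $M_m=O(\dmax^{m-1}S)$, I have $T_r=S^{-rD/2}\sum_j[d_j]_D^r e^{ru_j}$ with $u_j=\Deltait+\gamma_j$ from Lemma~\ref{small-ell}; in particular $T_4=O(\dmax^3/S)$ when $D=1$. Hence it suffices to evaluate $T_1,T_2,T_3$ to absolute precision $O(\dmax^3/S)$.

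The $D=2$ case I would dispatch first, since there the claimed error is merely a multiplicative $\exp(O(\dmax^2/S))$. Here $u_j=\gamma_j=O(\dmax^2/S)$ uniformly, so $e^{\gamma_j}=1+O(\dmax^2/S)$ and $T_1=S^{-1}\sum_j[d_j]_2\,e^{\gamma_j}=\frac{S_2}{S}\exp(O(\dmax^2/S))$. The same factorisation, with $[d_j]_2^r\le\dmax^{2(r-1)}[d_j]_2$, gives $T_r=\frac{S_2}{S}\,O\bigl((\dmax^2/S)^{r-1}\bigr)$ for $r\ge1$, so the tails of both series are $\frac{S_2}{S}\,O(\dmax^2/S)$ and are absorbed into the multiplicative error, yielding both stated formulae.

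The real work is the $D=1$ case, where $a_j=S^{-1/2}d_j\,e^{u_j}$ with $u_j=c_0+c_1d_j$, $c_0=\frac{1}{2S^{1/2}}-\frac1S-\frac{2S_2}{S^2}-\frac{S_2^2}{2S^3}$ and $c_1=\frac1S+\frac{S_2}{S^2}$ (collecting the $j$-independent and $d_j$-linear parts of $\Deltait+\gamma_j$). Because $u_j$ is linear in $d_j$, I can expand $T_r=S^{-r/2}e^{rc_0}\sum_{k\ge0}\frac{(rc_1)^k}{k!}M_{r+k}$ and truncate. The key simplification, which I would isolate as a separate computation, is the exact identity $\sum_j d_ju_j=c_0M_1+c_1M_2=\tfrac12S^{1/2}+\tfrac{S_2^2}{2S^2}$, with massive cancellation; this is exactly what the chosen $\Deltait$ and $\gamma_j$ are engineered to produce. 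Retaining in each $T_r$ only the contributions above the threshold — for $T_1$ the terms through the quadratic-in-$u_j$ part, whose leading piece $\tfrac18$ supplies the $\frac{1}{8S^{1/2}}$; for $T_2$ the terms through the linear-in-$u_j$ part; and for $T_3$ only the bare moment $S^{-3/2}M_3$ — and substituting $M_1,M_2,M_3$ yields
\[
 T_1=S^{1/2}+\tfrac12+\tfrac18 S^{-1/2}+\tfrac12 S_2^2\,S^{-5/2}+O(\dmax^3/S),
\]
\[
 T_2=1+\tfrac{S_2}{S}+S^{-1/2}+\tfrac{S_2}{S^{3/2}}+O(\dmax^3/S),
\]
\[
 T_3=S^{-1/2}+\tfrac{3S_2}{S^{3/2}}+\tfrac{S_3}{S^{3/2}}+O(\dmax^3/S).
\]
Forming $T_1-T_2+T_3$ and $T_1-\tfrac12T_2+\tfrac13T_3$ then gives the two stated expansions after the indicated cancellations (for instance the $S^{-1/2}$ coefficients combine as $\tfrac18-1+1=\tfrac18$ and as $\tfrac18-\tfrac12+\tfrac13=-\tfrac1{24}$).

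The main obstacle is the bookkeeping of orders in the $D=1$ case, and in particular justifying the truncations uniformly over the whole admissible range $1\le\dmax=o(S^{1/3})$. The difficulty is that the target error $O(\dmax^3/S)$ itself depends on $\dmax$: when $\dmax$ is bounded it is as small as $O(1/S)$, forcing me to keep $\dmax$-free terms as small as $S^{-1/2}$ (such as the $\tfrac18$ contribution to $T_1$), while for $\dmax$ near $S^{1/3}$ several moment-weighted terms that look negligible for small $\dmax$ survive instead. I would therefore make the retention criterion explicit — a monomial $S^{1/2-k/2-i/2}\dmax^i$ (arising from $c_0^{k-i}c_1^iM_{i+1}$ in $T_1$, and analogously in $T_2,T_3$) is discarded exactly when $S^{1/2-k/2-i/2}\dmax^i=O(\dmax^3/S)$ holds for every such $\dmax$ — and then check the finitely many surviving $(k,i)$ against it, which is precisely what fixes the expansion depths used above.
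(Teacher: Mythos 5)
Your proposal is correct and takes essentially the same route as the paper: the paper's proof likewise reduces both quantities to the power sums $\sum_j a_j^r$ for $r\le 4$ (its displayed estimates \eqref{asums}, whose values coincide exactly with your $T_1,T_2,T_3,T_4$, and the analogous two sums for $D=2$) and then combines them through the expansions of $x/(1+x)$ and $\log(1+x)$. Your explicit derivation of those power sums via the moments $M_m$ and the cancellation identity $\sum_j d_j u_j = \tfrac12 S^{1/2}+S_2^2/(2S^2)$ merely fills in computational details that the paper leaves implicit.
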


\begin{proof}
For $D=1$ we have
\[ \Deltait = O(S^{-1/2}),\quad 
 \gamma_j = O\biggl(\frac{\dmax^2}{S}\biggr)+O\biggl(\frac{\dmax}{S}\biggr) d_j,
\]
and find that
\begin{align}
\label{asums}
\begin{split}
  \sum_{j=1}^n a_j &= 
    \sqrt{S} + \frac{1}{2} + \frac{1}{8S^{1/2}} + \frac{S_2^2}{2S^{5/2}}
     + O\biggl(\frac{\dmax^3}{S}\biggr), \\
  \sum_{j=1}^n a_j^2 &=
    \frac{S_2}{S} + \frac{S_2}{S^{3/2}} + 1 + \frac{1}{S^{1/2}} 
             + O\biggl(\frac{\dmax^3}{S}\biggr), \\
  \sum_{j=1}^n a_j^3 &= 
    \frac{S_3}{S^{3/2}} + \frac{3S_2}{S^{3/2}} + \frac{1}{S^{1/2}} 
               + O\biggl(\frac{\dmax^3}{S}\biggr), \\
  \sum_{j=1}^n a_j^4 &= O\biggl(\frac{\dmax^3}{S}\biggr),
\end{split} 
\end{align}
from which the result follows. When $D=2$ we have
\[ 
\sum_{j=1}^n a_j =  \frac{S_2}{S}
                 + O\biggl(\frac{\dmax^2S_2}{S^2}\biggr),\qquad
 \sum_{j=1}^n a_j^2 =  O\biggl(\frac{\dmax^2S_2}{S^2}\biggr),
\]
which imply the result in this case.
\end{proof}

Next we calculate the sum of the right hand side of
(\ref{answer}) over all $\zvec\in\{ 0,1\}^n$
(subject to a parity constraint if $D=1$),
after dividing by the factor $\prod_{j=1}^n (1 + a_j)$.

\begin{lemma}
\label{Kexpectation}
Let $K^*$ be either of the functions $K',K''$ defined in Lemma~\ref{small-ell}.\\
If $D=1$ then for $\rho \in \{ 0,1\}$,
\[
  \sum_{\stackrel{\zvec\in\{ 0,1\}^n}{\abs{\zvec}\equiv\rho\text{~\rm (mod 2)}}}
		     \kern-0.5em\exp\(K^*(\card\zvec)\)\,
   \prod_{j=1}^n \frac{a_j^{z_j}}{1+a_j} 
          = \dfrac{1}{2}\, \exp\biggl(
     -\frac{1}{4} + \frac{1}{3S^{1/2}}  + \frac{S_2}{2S^{3/2}}
                            + O\biggl(\frac{\dmax^3}{S}\biggr)
           \biggr).
\]
If $D=2$ then
\[ \sum_{\zvec\in\{ 0,1\}^n} \exp\(K^*(\card\zvec)\) \,
   \prod_{j=1}^n \frac{a_j^{z_j}}{1+a_j} 
     = \exp\( O(\dmax^3/S)\).
\]
\end{lemma}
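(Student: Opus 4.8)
The plan is to recognise each sum as an expectation against a Poisson binomial distribution and then feed it through Lemmas~\ref{Chernoff}--\ref{parity}. Put $p_j=a_j/(1+a_j)$, so that $1-p_j=1/(1+a_j)$ and $\prod_{j=1}^n a_j^{z_j}/(1+a_j)=\prod_{j=1}^n p_j^{z_j}(1-p_j)^{1-z_j}$, which is exactly the probability that $\PB(\pvec)$ produces the pattern $\zvec$. Hence the unrestricted sum is $\E\(\exp(K^*(X))\)$ for $X\sim\PB(\pvec)$. Lemma~\ref{useful} supplies the mean $\bar X=\sum_{j=1}^n p_j=\sum_{j=1}^n a_j/(1+a_j)$, which is $\sqrt S+O(1)$ for $D=1$ and $(S_2/S)\exp(O(\dmax^2/S))$ for $D=2$; since $d_j\ge1$ forces $S\ge n$ and $\dmax=o(S^{1/3})$ forces $n=\omega(S^{2/3})$, in both cases $\bar X\le n/(\log n)^2$ for large $S$, so Lemmas~\ref{Chernoff} and~\ref{technical} apply.

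For the main term I would split $K^*(\ell)=P(\ell)+r(\ell)$, where $P(\ell)=-\Deltait\ell+\frac{D^2\ell^2}{4S}+\frac{D^3\ell^3}{12S^2}$ is the explicit cubic and $r(\ell)=O\(\dmax^2\ell^2/S^2+\ell^4/S^3+\dmax^3/S\)$ is the remainder from Lemma~\ref{small-ell}. On the bulk $\abs{X-\bar X}\le n^{1/2}$ one checks (using $n\le S$) that $r(X)=O(\dmax^3/S)$ uniformly, so the factor $\exp(r(X))=\exp(O(\dmax^3/S))$ pulls out, while the complementary range contributes only $n^{-\Omega(\log n)}$ by Lemma~\ref{Chernoff}. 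It remains to evaluate $\E\(\exp(P(X))\)$. Applying Lemma~\ref{technical} to $f(x)=P(\bar X+x)-P(\bar X)$ — a fixed cubic whose coefficients are $O(S^{-1/2}),O(S^{-1}),O(S^{-2})$, so that $S\ge n$ gives the required bound $\abs{f(x)}=O(x^2/n+\abs x/n^{1/2})$ — yields $\E\(\exp(P(X))\)=\exp\(P(\bar X)+\E f(X-\bar X)+O(\E f(X-\bar X)^2)+n^{-\Omega(\log n)}\)$. Here $\E f(X-\bar X)=\tfrac12 P''(\bar X)\Var(X)+O(\dmax^3/S)$ with $\Var(X)=\sum_{j=1}^n a_j/(1+a_j)^2$ obtained from the power-sum estimates underlying Lemma~\ref{useful}. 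A short computation then delivers the constants: for $D=1$, $P(\bar X)=-\tfrac14+\tfrac1{12}S^{-1/2}+\tfrac12 S_2S^{-3/2}+O(\dmax^3/S)$ and $\E f=\tfrac14 S^{-1/2}+O(\dmax^3/S)$, whose sum is $-\tfrac14+\tfrac13S^{-1/2}+\tfrac12 S_2S^{-3/2}$; for $D=2$ each of $P(\bar X)$, $\E f$, $\E f^2$ is already $O(\dmax^3/S)$. This settles the $D=2$ identity (whose sum is unrestricted) and produces the factor $\exp(-\tfrac14+\cdots)$ for $D=1$.

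When $D=1$ the sum is restricted to $\abs\zvec\equiv\rho\pmod2$, which I would treat with Lemma~\ref{parity} applied to $f=\exp\circ\,K^*$ and the same $p_j$. With $Z=\prod_{j=1}^n(1-2p_j)$ and $r_j=-p_j/(1-2p_j)$, the relation $P(-w)=Z\prod_{j=1}^n(1-r_j+r_jw)$ from the proof of Lemma~\ref{parity} gives $Z\,\hat f(r_1,\dots,r_n)=\E\((-1)^X\exp(K^*(X))\)$, so the parity-restricted sum equals $\tfrac12\E\(\exp(K^*(X))\)+(-1)^\rho\tfrac12\E\((-1)^X\exp(K^*(X))\)$. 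The first term is half of the quantity computed above, supplying the required factor $\tfrac12$.

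The main obstacle is the second term. Bounding it by absolute values is useless: since $\abs Z=\prod_{j=1}^n\frac{1-a_j}{1+a_j}=e^{-2\sqrt S(1+o(1))}$ while $\abs{\hat f(r_1,\dots,r_n)}$ can reach $e^{2\sqrt S(1+o(1))}$, their product is only $O(1)$, so one must exploit the cancellation forced by $(-1)^X$. The cleanest route I see is to strip off the linear part of $K^*$ by an exponential tilt, absorbing $e^{-\Deltait\ell}$ into the Bernoulli parameters; this replaces $\E\((-1)^X\exp(K^*(X))\)$ by $\Theta(1)\cdot\E\((-1)^{\tilde X}\exp(g(\tilde X))\)$, where $\tilde X\sim\PB(\tilde\pvec)$ with $\tilde p_j=\Theta(p_j)$ and $g$ is the bulk-bounded nonlinear part. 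Because $\sum_j\tilde p_j=\Omega(\sqrt S)\to\infty$, the alternating weight produces near-total cancellation: the value of the characteristic function at argument $\pi$, namely $\prod_{j=1}^n(1-2\tilde p_j)=e^{-\Omega(\sqrt S)}$, governs the sum, and the smooth factor $\exp(g)$ — essentially constant across the $O(S^{1/4})$-wide effective support of $\tilde X$ — cannot destroy this (rigorously via repeated summation by parts, or via a Hubbard--Stratonovich representation of $\exp(\tfrac{D^2}{4S}\ell^2)$ that reduces the inner sum to $\prod_{j=1}^n(1-\tilde p_j(1+e^s))$ with $\abs s=O(S^{-1/2})$). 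Thus the second term is $e^{-\Omega(\sqrt S)}$ and is swallowed by the error $O(\dmax^3/S)$, completing the proof.
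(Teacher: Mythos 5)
Your setup and your treatment of the unrestricted sums are sound, and for $D=2$ your argument is essentially the paper's: write the sum as $\E\(\exp(K^*(X))\)$ for $X\sim\PB(\pvec)$ with $p_j=a_j/(1+a_j)$, recentre at $\bar X$, and apply Lemma~\ref{technical}. For the unrestricted $D=1$ sum your route is a legitimate minor variant: the paper does not apply Lemma~\ref{technical} to the cubic $P$; instead it replaces $\exp(K^*(\ell))$ in the bulk $\ell=\sqrt S+O(S^{1/3})$ by $\exp\(-\tfrac14+O(\dmax^3/S)\)f(\ell)$ with $f$ an \emph{explicit degree-4 polynomial}, and computes $\E\(f(X)\)$ from the first four moments. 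Your constants ($P(\bar X)=-\tfrac14+\tfrac1{12}S^{-1/2}+\tfrac12 S_2S^{-3/2}$, $\E f=\tfrac14 S^{-1/2}$, summing to $-\tfrac14+\tfrac13 S^{-1/2}+\tfrac12 S_2S^{-3/2}$) agree with the paper's answer, so this half of your argument is fine.

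The genuine gap is the $D=1$ parity correction, i.e.\ the term $\E\((-1)^X\exp(K^*(X))\)$, which you correctly isolate as the main obstacle but never actually bound. Your Hubbard--Stratonovich reduction is not valid as stated: writing $\exp\(\ell^2/(4S)\)=\pi^{-1/2}\int_{-\infty}^{\infty}e^{-t^2+t\ell/\sqrt S}\,dt$ turns the alternating sum into $\pi^{-1/2}\int e^{-t^2}\prod_{j=1}^n\(1-\tilde p_j(1+e^{t/\sqrt S})\)\,dt$, and this integral runs over \emph{all} $t$, not over $\abs{t/\sqrt S}=O(S^{-1/2})$; once $e^{t/\sqrt S}$ exceeds roughly $\sqrt S/d_j$ the $j$-th factor flips sign and can have magnitude much larger than $1$, so exponential smallness of the integrand must be established by a separate case analysis over the whole range of $t$ --- exactly the work your sketch omits. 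Moreover the cubic term $\ell^3/(12S^2)$ and the remainder $r(\ell)$ admit no such Gaussian representation, and ``$\exp(g)$ is essentially constant over the effective support'' is not an argument here: the target main term $\prod_j(1-2\tilde p_j)$ is $e^{-\Omega(\sqrt S)}$, so even an $\Omega(\dmax^3/S)$ non-constancy could swamp it; the same objection applies to the uncarried-out ``repeated summation by parts.'' The paper's proof contains precisely the missing idea: because $\exp(K^*(\ell))$ has been replaced in the bulk by a constant times the polynomial $f(\ell)$, Lemma~\ref{parity} can be applied to $f$, and the troublesome quantity $\hat f(r_1,\ldots,r_n)$ is then given by the same polynomial identities in the power sums $\sum_j r_j^k$ that express the moments $\E(X^k)$; these yield $\hat f(r_1,\ldots,r_n)=O(1)$, whence $Z\hat f(r_1,\ldots,r_n)=e^{-\Omega(\sqrt S)}\cdot O(1)$ is negligible. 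It is this polynomial-approximation step that converts the alternating expectation into something the crude bound $\abs Z=e^{-\Omega(\sqrt S)}$ can kill; without it, or a genuinely completed substitute, your proof of the $D=1$ case is incomplete.
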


\begin{proof}
Define $\pvec=(p_1,\ldots,p_n)$ where $p_j = a_j/(1+a_j)$
for $j=1,\ldots, n$, and 
let $X$ be a random variable with Poisson binomial
distribution $\PB(\pvec)$.
Then
\[ \sum_{\zvec\in\{ 0,1\}^n} \exp(K^*(\card{\zvec}))\,
   \prod_{j=1}^n \frac{a_j^{z_j}}{1+a_j} 
   = \E\(\exp(K^*(X))\). \]
The expectation of $X$ is $\bar{X} = \sum_{j=1}^n p_j$,
which has been calculated for $D=1,2$ in Lemma~\ref{useful}.

First suppose that $D=1$.
Recall from Lemma~\ref{useful} that
\[ \sum_{j=1}^n p_j =\sqrt S + O(\dmax) = \sqrt S + o(S^{1/3}).\]
{}From (\ref{contribution}) we know that 
\[
   \sum_{\abs\zvec>3\sqrt S}
 \exp(K^*(\abs\zvec))\prod_{j=1}^n \frac{a_j^{z_j}}{1+a_j} = n^{-\Omega(\log n)}.
\]
Next we observe that by Lemma~\ref{Chernoff},
\[
   \sum_{\abs{\sqrt S-\abs{\zvec}}>S^{1/3}} \; 
       \prod_{j=1}^n \frac{a_j^{z_j}}{1+a_j} = O(e^{-S^{1/6}}).
\]
For $\ell-\sqrt S = O(S^{1/3})$ we have, using (\ref{Kdef}),
\begin{equation}
\label{Kexpand}
 \exp\(K^*(\ell)\) = 
    \exp\(-\dfrac{1}{4} + O(\dmax^3/S)\) f(\ell), 
\end{equation}
where
\begin{align*}
   f(\ell) &= \frac{\ell^4}{32S^2} - \frac{\ell^3}{8S^{3/2}} + 
         \biggl( \frac{7}{16S} + \frac{S_2}{8S^{5/2}} + 
                   \frac{13}{48S^{3/2}}\biggr)\ell^2 \\
         &{\quad}+\biggl( -\frac{5}{8S^{1/2}} + \frac{S_2}{4S^2} 
                        - \frac{7}{24S}\biggr)\ell
         + \frac{41}{32} +\frac{S_2}{8S^{3/2}} + \frac{5}{48S^{1/2}}\;.
\end{align*}
Since $K^*(\ell)=O(1)$ for $\ell=O(\sqrt{S}\,)$, it follows that
\begin{align}
    \sum_{\abs\zvec\equiv \rho\!\!\pmod{2}}&
   \kern-0.5em\exp(K^*(\abs\zvec))\,\prod_{j=1}^n \frac{a_j^{z_j}}{1+a_j} \notag\\
   &= 
		      n^{-\Omega(\log n)} + 
    \exp\(-\dfrac{1}{4} + O(\dmax^3/S)\)
      \sum_{\abs\zvec\equiv \rho\text{~\rm (mod 2)}}
      \kern-0.5em f(\abs\zvec)\,\prod_{j=1}^n \frac{a_j^{z_j}}{1+a_j}.
 \label{middle}
\end{align}		     
We now apply Lemma~\ref{parity} to estimate the sum on the right hand side.
The small order moments of $X$ are
\begin{equation}
\label{moments}
\begin{split}
\E(X^2) &= \bar{X}^2 + \sum_{j=1}^n p_j(1-p_j),  \\[-1ex]
\E(X^3) &= \bar{X} + 3 \bar{X}^2
+ \bar{X}^3 - 3\sum_{j=1}^n p_j^2 - 3\bar{X}\,\sum_{j=1}^n p_j^2 
           + 2\sum_{j=1}^n p_j^3,\\
\E(X^4)  
   &=  \bar{X} + 7 \bar{X}^2 + 6 \bar{X}^3 + \bar{X}^4
   - \( 6 \bar{X}^2 - 18\bar{X} + 7)\sum_{j=1}^n p_j^2 \\
  & {} \qquad\qquad
    + 3\biggl(\,\sum_{j=1}^n p_j^2\biggr)^2 + (8\bar{X} + 12)\sum_{j=1}^n p_j^3
    - 6\sum_{j=1}^n p_j^4.
\end{split}
\end{equation}
Substituting~\eqref{asums} into these expressions gives
\[ \E(X^k) = 
	     \begin{cases}
        \,1 & \text{~if $k=0$};\\
         \,\sqrt S - S_2/S - \tfrac12 + O(\dmax^3/S^{1/2})
             & \text{~if $k=1$};\\
         \,S - 2S_2/S^{1/2} + O(\dmax^3) & \text{~if $k=2$};\\
         \,S^{3/2} - 3S_2 + \tfrac{3}{2}S + O(\dmax^3S^{1/2})
             & \text{~if $k=3$};\\
         \,S^2 + 4S^{3/2} - 4S^{1/2}S_2 + O(\dmax^3S)
             & \text{~if $k=4$}.
   \end{cases}
\]
Hence
\[
 \hat{f}(p_1,\ldots, p_n) 
 = \E\(f(X)\)\\
       = \exp\biggl(\frac{1}{3S^{1/2}} + \frac{S_2}{2S^{3/2}} +  
                      O\biggl(\frac{\dmax^3}{S}\biggr)\biggr)
\]
where $\hat{f}$ is the function obtained from $f$ as in (\ref{fhatEY}).
Let $f_k$ be the polynomial defined by
$f_k(t) = t^k$ for all $t\in\Reals$, for $k=1,2,3,4$.
Since $\E(X^k) = \hat{f}_k(p_1,\ldots, p_n)$,  replacing each
$p_j$ with $r_j$ in (\ref{moments}) leads to the following
(abusing notation slightly to define the
abbreviation $\hat{f}_1$ in the first line):
\begin{align*}
\hat{f}_1 = \hat{f}_1(r_1,\ldots, r_n) &= \sum_{j=1}^n r_j = O(\sqrt{S}\,),\\[-0.7ex]
\hat{f}_2(r_1,\ldots, r_n) &= \hat{f}_1^2
                                     + \sum_{j=1}^n r_j(1-r_j) = O(S),\\[-0.7ex]
\hat{f}_3(r_1,\ldots, r_n) &= \hat{f}_1 +
   3 \hat{f}_1^2 + \hat{f}_1^3 
  - 3\sum_{j=1}^n r_j^2  - 3\hat{f}_1\,\sum_{j=1}^n r_j^2 
           + 2\sum_{j=1}^n r_j^3 = O(S^{3/2}),\\
\hat{f}_4(r_1,\ldots, r_n)  
   &=  \hat{f}_1 + 7 \hat{f}_1^2 
   + 6 \hat{f}_1^3 + \hat{f}_1^4 - \( 6 \hat{f}_1^2 - 
     18\hat{f}_1 + 7)\sum_{j=1}^n r_j^2  \\
  & {} \qquad\qquad
    + 3\biggl(\,\sum_{j=1}^n r_j^2\biggr)^2 + 
  (8\hat{f}_1 + 12)\sum_{j=1}^n r_j^3
    - 6\sum_{j=1}^n r_j^4 = O(S^4). 
\end{align*}
{}From this we conclude that $\hat f(r_1,\ldots,r_n)= O(1)$. 
Furthermore, 
\[ Z = \exp\biggl(-\Omega\biggl(\sum_{j=1}^n p_j\biggr)\biggr) 
        = e^{-\Omega(\sqrt S)}.  \]
Thus by Lemma~\ref{parity}  we obtain
\[
      \sum_{\abs\zvec\equiv \rho\text{~\rm (mod 2)}}
      \kern-0.5em f(\abs\zvec)\,\prod_{j=1}^n \frac{a_j^{z_j}}{1+a_j}
  = \dfrac{1}{2} \, \exp\biggl(\frac{1}{3S^{1/2}} + \frac{S_2}{2S^{3/2}}
             + O\biggl(\frac{\dmax^3}{S}\biggr)\biggr).\]
Combining this with (\ref{middle}) establishes the lemma when $D=1$.

\bigskip

Next suppose that $D=2$.
Expanding $K^*$ around $\bar{X}$ gives
\begin{equation}
\label{hdef}
 K^*(X) = h(X-\bar{X}) + O(\dmax^3/S)
\end{equation}
where
$h:\Reals\rightarrow\Reals$ is a function
which satisfies
\begin{equation}
\label{easy-h}
 h(y) = O\biggl(\frac{\dmax}{S}\biggr) y + O\biggl(\frac{1}{S}\biggr) y^2
\end{equation}
for $\abs y\le S$.
Recall our assumption that $d_j\geq 1$ for $j=1,\ldots, n$,
which implies that $S\geq n$.
Hence the function $h$ satisfies the conditions of Lemma~\ref{technical}
for some constant $C>0$.   We proceed to apply this lemma, specifically
(\ref{first}).

The second and fourth central moments of $X$ are
\begin{equation}
\label{centrals}
\begin{split}
\E\((X - \bar{X})^2\) &= \sum_{j=1}^n p_j(1-p_j) = O(\bar{X}), \\[-1ex]
\E\((X - \bar{X})^4\) &= 3\, \E\( (X - \bar{X})^2\)^2 +
            \sum_{j=1}^n p_j(1-p_j)(1-6p_j+6p_j^2) = O(\bar{X}+\bar{X}^2). 
\end{split}
\end{equation}
Recall from Lemma~\ref{useful} that $\bar{X} = O(\dmax)$, and
also note that $\abs y \le 1+y^2$.  From \eqref{easy-h} and
\eqref{centrals}, we have
\[  
  \E\( h(X - \bar{X}) \) = O(\dmax/S)
     \( 1 + \E((X-\bar X)^2)\)
  = O(\dmax^2/S).
\]
Similarly, from (\ref{easy-h}) by applying (\ref{centrals})
and using the inequality $(u+v)^2\leq 2(u^2+v^2)$,
we obtain
\[
  \E\( h(X - \bar{X})^2 \) 
    = 
    O(\dmax^2/S^2)\, \E\( (X-\bar{X})^2 \)
 + O(S^{-2})\, \E\( (X - \bar{X})^4 \) \\
  = O(\dmax^3/S^2).
\]
Therefore (\ref{first}) gives
\[
 \E\(\exp(h(X - \bar{X}))\) = \exp\(O(\dmax^3/S)\).
\]
This completes the proof when $D=2$, using (\ref{hdef}).
\end{proof}

We may now prove our main result in the sparse case.

\begin{proof}[Proof of Theorem~\ref{sparsetheorem}]
First suppose that $S > n\log n$.  
Then Lemma~\ref{small-ell} applies for all values of $\ell$.  
Furthermore, 
\[ \dmax^3 = o(S - Dn)\]
since $S - Dn = \Omega(S)$, so (\ref{MWexpression}) can be
applied to $\dvec -D\zvec$, for all 
$\zvec\in\Lambdait$.
Notice also that $a_j=0$ whenever $d_j< D$, so the sum of the right hand
side of (\ref{answer}) over $\zvec\in\Lambdait$ is
equal to the sum over $\{ 0,1\}^n$ when $D=2$, or over
$\Lambdait^{(2)}$ when $D=1$.
Hence the result follows from (\ref{strategy}) using
(\ref{MWexpression}) and Lemmas~\ref{small-ell}--\ref{Kexpectation}.

Now suppose that $n \leq S\leq n\log n$. 
We show that terms with
$\card{\zvec} > S/3$ give a negligible contribution
to $G_D(\dvec)$.

It is well known that when $S$ is even, we can write
\[ G(\dvec) = \frac{S!}{(S/2)!\, 2^{S/2}}
  \biggl(\,\prod_{j=1}^n d_j!\biggr)^{\!-1} P(\dvec)
\]
where $P(\dvec)$ is a probability, and hence is at most 1.
(Indeed, the $\exp(\cdot)$ factor in (\ref{MWexpression}) is an 
approximation to $P(\dvec)$ when $\dmax = o(S^{1/3})$,
as proved in~\cite{MWsparse}.)
It follows by Stirling's approximation that
\[
G(\dvec) = O(1)\,
 \biggl(\frac{S}{e}\biggr)^{S/2}
  \biggl(\,\prod_{j=1}^n d_j!\biggr)^{\!-1}\,
  \]
for any even value of $S$.
Recall the definition of $\Lambdait_\ell$ from (\ref{lambda-ell-def}).
For $\zvec\in\Lambdait_\ell$ we have
\[ G(\dvec - D\zvec) =  O(1)\,
         \biggl(\frac{S-D\ell}{e}\biggr)^{(S-D\ell)/2}\,
           \biggl(\,\prod_{j=1}^n \, (d_j - Dz_j)!\biggr)^{\!-1}.\]
Furthermore, 
\[ H(\dvec)^{-1} = \exp\(O( \dmax^2)\)\, 
              \biggl(\frac{e}{S}\biggr)^{\!S/2}
   \, \prod_{j=1}^n d_j!.
\]
Hence 
\[
 \frac{G(\dvec - D\zvec)}{H(\dvec)}
        = O(1)\exp\(O(\dmax^2)\)\,
   \biggl(\frac{\dmax^2\, e}{S}\biggr)^{\!D\ell/2}. 
\]
Therefore, recalling that $\ell\leq n$ and ignoring parity for
an upper bound,
\begin{align}
\sum_{\ell = S/3 }^n \,\,
  \sum_{\zvec\in \Lambdait_\ell} 
   \frac{G(\dvec - D\zvec)}{H(\dvec)}
  &= O(1) \exp\(O(\dmax^2)\)\, \sum_{\ell = S/3}^n \, \binom{n}{\ell} 
          \biggl(\frac{\dmax^2 e}{S}\biggr)^{D\ell/2} \notag \\
  &= O(1)\exp\(O(\dmax^2)\)\, \sum_{\ell = S/3}^n \, \binom{n}{\ell}
    S^{-D\ell/6} \notag \\
  &= O(1)\exp\(O(\dmax^2)\)\, 2^n\, S^{-DS/18} \notag \\[0.5ex]
  &= O\(S^{-\Omega(S)}\).\label{earlier}
\end{align}
Recall that (\ref{MWexpression}) applies when $\ell < S/3$.
Therefore, using (\ref{MWexpression}) and Lemma~\ref{small-ell},
\begin{align*}
 \frac{G_D(\dvec)}{H(\dvec)} &= 
  S^{- \Omega(S)} + 
           \sum_{\ell=0}^{S/3}\,\,  \sum_{\zvec\in \Lambdait_\ell}
            \frac{G(\dvec - D\zvec)}{H(\dvec)}
   \notag \\
  &= O\(S^{- \Omega(S)}\) + 
       \exp\(O(\dmax^3/S)\)\,
     \sum_{\ell = 0}^{S/3}\, \sum_{\zvec\in\Lambdait_\ell}
  \exp\(K(\zvec)\)\, \prod_{j=1}^n a_j^{z_j}. 
\end{align*}
Hence, by \eqref{Kbounds},
\begin{align}
  O\(S^{- \Omega(S)}\) + &
       \exp\(O(\dmax^3/S)\)\,
     \sum_{\ell = 0}^{S/3}\, \exp\(K'(\ell)\) \,
    \sum_{\zvec\in\Lambdait_\ell}\, \prod_{j=1}^n a_j^{z_j}\notag\\
  & \leq  \frac{G_D(\dvec)}{H(\dvec)} \notag\\
  & \leq 
  O\(S^{- \Omega(S)}\) + 
       \exp\(O(\dmax^3/S)\)\,
     \sum_{\ell = 0}^{S/3}\, \exp\(K''(\ell)\) \,
    \sum_{\zvec\in\Lambdait_\ell}\, \prod_{j=1}^n a_j^{z_j}. 
\label{end}
\end{align}
Next we would like to show that, in either the lower or upper bound
in (\ref{end}), the sum over $\ell$ can be extended up to
$\ell=n$ without affecting the answer significantly.
Since every term is positive, zero is a lower bound for the tail
of the sum.
Again, we ignore the parity issue for an upper bound.
Let $K^\ast$ be either $K'$ or $K''$.
Firstly, note that since $n\leq S$ we have $K^*(\ell) = O(\ell)$
uniformly for $S/3 \leq \ell\leq n$.
Furthermore $a_j = o(S^{-D/6})$ for $j=1,\ldots, n$.
Therefore
\begin{align}
\sum_{\ell= S/3}^n \, \exp\( K^\ast(\ell)\) \, \sum_{\zvec\in\Lambdait_\ell}
    \, \prod_{j=1}^n a_j^{z_j}
	   &\leq
\sum_{\ell= S/3}^n \, \binom{n}{\ell}\, \(e^{O(1)}\, S^{-D/6}\)^\ell \notag\\
&\leq
\sum_{\ell= S/3}^n \, \binom{n}{\ell}\, S^{-D\ell/7} \notag\\
&= O\(S^{-\Omega(S)}\) \label{pipsqueek}
\end{align}
as in (\ref{earlier}).
Combining this with (\ref{end}) gives 
\begin{align*}
  O\(S^{- \Omega(S)}\) + &
       \exp\(O(\dmax^3/S)\)\,
    \sum_{\zvec\in\Lambdait}\, \exp\( K'(\abs\zvec)\)\,
          \prod_{j=1}^n a_j^{z_j}\notag\\
  & \leq  \frac{G_D(\dvec)}{H(\dvec)} \notag\\
  & \leq 
  O\(S^{- \Omega(S)}\) + 
       \exp\(O(\dmax^3/S)\)\,
    \sum_{\zvec\in\Lambdait}\, 
      \exp\(K''(\abs\zvec)\) \, \prod_{j=1}^n a_j^{z_j}. 
\label{end}
\end{align*}
The result now follows from Lemma~\ref{useful} and Lemma~\ref{Kexpectation}.
\end{proof}

\nicebreak
\section{Proof of Theorem~\ref{distribution}}\label{s:distributionproof}

Part (i).~
Under the conditions of Theorem~\ref{densetheorem}, the distribution
of $Y_D$ follows directly from \eqref{binexp} and \eqref{bintail}, noting
in the case of $D=1$ that the restriction of $\ell$ to the same parity
as $S$ changes the normalizing factor by 2 to high precision, as
explained in the last paragraph of Section~\ref{ss:dense-proof}.
The formula for the expectation follows on summing $\ell\Prob(Y_D=\ell)$, since
the error term $O(e^{-n^{\Omega(1)}})$ contributes negligibly.
To see that the same is true for the variance, it helps to use the
cancellation-free formula
\begin{equation}
\label{varsum}
 \Var(Z) = \sum_{k<\ell} \Prob(Z=k)\Prob(Z=\ell)\, (k-\ell)^2,
\end{equation}
which is true for all discrete random variables $Z$ of finite variance
(see for example~\cite[p.~8]{lee}).

Part (ii).~
Now suppose that the conditions of Theorem~\ref{sparsetheorem}
hold and consider the case $D=1$.
Let $X$ be a random variable with the Poisson binomial 
distribution $\PB(\pvec)$, where $p_j=a_j/(1+a_j)$ and $a_j$ is
defined in Lemma~\ref{small-ell}.
{}From \eqref{answer}, \eqref{Kexpand}, we find that for 
$\ell=\sqrt{S}+O(S^{1/3})$, the distribution of $Y_1$ is proportional
to $\PB(\pvec)$ to relative error $O(\dmax^3/S+S^{-1/3})$.
Moreover, the weight of both $Y_1$ and $X$ from
$\abs{\ell-\sqrt{S}\,} > S^{1/3}$ is $e^{-S^{\Omega(1)}}$, and
restriction of $\ell$ to the same parity as $S$ contributes a factor
of 2 to high precision as in the proof of Lemma~\ref{Kexpectation}.
This gives, for $\ell=0,\ldots,n$,
\[ \Prob(Y_1=\ell) 
     = \(2 + O(\dmax^3/S +S^{-1/3})\)
                    \PB(\pvec,\ell) +O(e^{-S^{\Omega(1)}}).
\]
Next we show that the parameters $\pvec'$ in the theorem are
sufficiently close to the parameters~$\pvec$.  For each $j$, we
find that
\begin{equation}\label{pjpj}
   p_j = \exp\(O(\dmax^3/S^{3/2} +S^{-1})\) \, p'_j.
\end{equation}
By definition,
\[
   \PB(\pvec,\ell) = \sum_{\abs W=\ell} \biggl(\,
       \prod_{j\in W} p_j \prod_{j\notin W} (1-p_j) \biggr),
\]
where the sum is over subsets $W\subseteq\{1,2,\ldots,n\}$ of size $\ell$.
Applying~\eqref{pjpj}, we find that
\[
  \PB(\pvec,\ell) = \exp\(O(\dmax^3/S +S^{-1/2})\)\, \PB(\pvec',\ell)
\]
for $\ell=O(\sqrt{S}\,)$. The tail past $\sqrt{S}$ is 
$e^{-S^{\Omega(1)}}$ for both $\PB(\pvec)$ and $\PB(\pvec')$, by
Lemma~\ref{Chernoff}.
This completes the proof of the distribution
for $D=1$.

The mean and variance follow as for part~(i) to the same relative
precision
as the distribution, but we can do better by using the more accurate
distribution analysed in the proof of Lemma~\ref{Kexpectation}.
As we have shown in~\eqref{Kexpand},
for $\ell=\sqrt S + O(S^{1/3})$, which excludes only
exponentially small tails, 
\begin{equation}
\label{Xcentral}
  \Prob(Y_1=\ell) \propto \exp\(O(\dmax^3/S)\) \Prob(X=\ell) f(\ell)
\end{equation}
if $\ell$ has the same parity as~$S$.
Define the discrete random variable $Z$ by
\[
   \Prob(Z=t) \propto \Prob(X=\bar X + t) f(\bar X + t),
\]
whenever $\bar X + t$ is an integer in $[0,n]$ with the same parity
as $S$; and $\Prob(Z=t)=0$ otherwise.
By~\eqref{Xcentral} and the argument used in part (i) of this proof,
\begin{align*}
   \E(Y_1) &= \exp\(O(\dmax^3/S)\) \( \bar X + \E(Z)\)  ,\\
   \Var(Y_1) &= \exp\(O(\dmax^3/S)\) \Var(Z) .
\end{align*}

For $m\ge 0$, define the central moment $\mu_m=\E\( (X-\bar X)^m \)$
and the cumulant $\kappa_m$ by
\[
   \log\phi(t) = \sum_{m=1}^\infty \kappa_m (it)^m/m!\, ,
\]
where $\phi(t)=\prod_{j=1}^n \(p_je^{it}+1-p_j\)$ is the
characteristic function of~$X$.
We find that $\kappa_m=O(\sqrt{S}\,)$ for $2\le m\le 6$. 
Using the well-known expressions for the central moments in terms
of the cumulants, and 
the explicit formulae~\eqref{centrals}, we find that
\begin{align*}
  \mu_2 &= \sqrt S - \frac{2S_2}{S} - \frac{3}{2} + O(\dmax^3/S^{1/2}), \\
  \mu_3 &= \kappa_3 = O(\sqrt{S}\,), \\
  \mu_4 &= 3S + O(\dmax S^{1/2}), \\
  \mu_5 &= \kappa_5 + 10 \kappa_3 \kappa_2 = O(S), \\
  \mu_6 &= \kappa_6 + 15 \kappa_4 \kappa_2 + 10 \kappa_3^2
                + 15 \kappa_2^3 = O(S^{3/2}).
\end{align*}
Thus we calculate
\begin{align*}
  M_0 &= \sum_{\ell=0}^n\sumprime \Prob(X=\ell)\, f(\ell) 
        = \frac{1}2{} + \frac{1}{6S^{1/2}} + \frac{S_2}{4S^{3/2}} + O(\dmax^2/S), \\
  M_1 &= \sum_{\ell=0}^n\sumprime \Prob(X=\ell)\, f(\ell) \,(\ell - \bar X) 
        = O(\dmax^2/S+S^{-1/2}), \\
  M_2 &= \sum_{\ell=0}^n\sumprime \Prob(X=\ell)\, f(\ell)\, (\ell - \bar X)^2
        = \frac{\sqrt S}{2} - \frac{3S_2}{4S}  - \frac{1}{3} + O(\dmax^3/S^{1/2}),
\end{align*}
where the primes indicate that the sums are restricted to $\ell$
having the same parity as~$S$.
The effect of the parity restriction is handled in the same way
as in the proof of Lemma~\ref{Kexpectation}, and in fact the first
summation is equivalent to Lemma~\ref{Kexpectation}.
Now we have that $\E(Z) = M_1/M_0$ and 
$\Var(Z) = M_2/M_0 - \E(Z)^2$.
{}From these the mean and variance of $Y_1$ follow.

Finally we consider part (ii) in the case $D=2$.  Define $X$ as
before, with $a_j$ as in Lemma~\ref{small-ell}.
By Lemma~\ref{Chernoff}, $\Prob(X\ge S^{1/3}) = O(e^{-S^{\Omega(1)}})$.
The same bound holds for $\Pr(Y_2 > S/3)$, using the argument leading
to~\eqref{earlier}.  Combining this with (\ref{contribution}) and 
Lemma~\ref{small-ell} shows
that $\Pr(Y_2\geq 2S^{1/2}) = O(e^{-S^{\Omega(1)}})$.
Finally, since $K''(\ell) = O(1)$ for $\ell = O(S^{1/2})$,
we conclude that $\Pr(Y_2 \geq S^{1/3} ) = O(e^{-S^{\Omega(1)}})$.

Lemma~\ref{small-ell} shows that for $\ell\le S^{1/3}$,
\[ 
   \Prob(Y_2=\ell) =\exp\( O(\dmax^3/S+S^{-1/3}) \)\, \Prob(X=\ell).
\]
By the argument above, the ratio of $\PB(\pvec)$ to
$\PB(\pvec'')$ for $\ell\le S^{1/3}$ is
$\exp\( O(\dmax^2/S^{2/3})\)$, since
$p_j = \exp\(O(\dmax^2/S)\) p_j''$ for all~$j$.
The given estimate of the distribution of $Y_2$ follows.

To obtain the mean and variance of $Y_2$, we use the sharper estimate
\[ \Prob(Y_2=\ell) \propto \exp\( O(\dmax^3/S)\) \Prob(X=\ell)\, (1 + \ell^2/S), \]
valid for $\ell\le \dmax^{3/4} S^{1/4}$ by Lemma~\ref{small-ell}, with the
weight of the tail $\ell > \dmax^{3/4} S^{1/4}$ being
exponentially small as usual.  Using~\eqref{moments} we find that
$\E(X^2)=O(\dmax^2)$, $\E(X^3)=O(\dmax^2 S_2/S)$ and
$\E(X^4)=O(\dmax^3 S_2/S)$, and so
\begin{align*}
  \sum_{\ell=0}^n \Prob(X=\ell)\,(1 + \ell^2/S)
        &= 1 + O\biggl(\frac{\dmax^2}{S}\biggr), \\
  \sum_{\ell=0}^n \Prob(X=\ell)\,(1 + \ell^2/S) \,\ell
        &= \frac{S_2}{S} + O\biggl(\frac{\dmax^2S_2}{S^2}\biggr), \\
   \sum_{\ell=0}^n \Prob(X=\ell)\,(1 + \ell^2/S) \,\ell^2
        &= \frac{S_2^2}{S^2} + \frac{S_2}{S}
		+ O\biggl(\frac{\dmax^3S_2}{S^2}\biggr),
\end{align*}
and from these the expressions for $\E(Y_2)$ and $\Var(Y_2)$ follow,
recalling the cancellation-free variance formula~\eqref{varsum}.
\qed

\section{A conjecture for regular graphs with loops}\label{s:conjecture}

In the case of $D=2$ and $\dvec=(d,d,\ldots,d)$, an informal
computation provides motivation for the sparse and dense enumeration
formulae and suggests a more general conjecture.
Since $D=2$ we have $d\in \{ 0,1,\ldots, n+1\}$.
Recall the notations $G_2(n,d) = G_2(d,d,\ldots,d)$ and $\mu_2=d/(n+1)$. 

Generate a random $n$-vertex graph by independently choosing each of the
$\binom{n+1}{2}$ possible edges (including loops) with probability~$\mu_2$.
Each $d$-regular graph has exactly $nd/2$ edges, so it
occurs with probability
\begin{equation}\label{peach}
  \mu_2^{nd/2} \, (1-\mu_2)^{\binom{n+1}{2}-nd/2}.
\end{equation}
The event that a particular vertex has degree $d$  has probability
\begin{align}\label{prow}
  \binom{n-1}{d}\, \mu_2^d\, (1-\mu_2)^{n-d} &+ 
   \binom{n-1}{d-2}\, \mu_2^{d-1}\, (1-\mu_2)^{n-d+1} \notag\\[1ex]
   &= \binom{n+1}{d} \, \frac{n-1}{n}\,   \mu_2^{d} \, (1-\mu_2)^{n-d+1}.
\end{align}
If the vertex degrees were independent (which of course they are not), the number
of graphs would be the $n$-th power of \eqref{prow} divided by \eqref{peach}.
Noting that $(1-1/n)^n\to e^{-1}$, this gives a ``na\"\i ve'' estimate
\[
  \widehat G_2(n,d) = e^{-1}
  \binom{n+1}{d}^{\!n} 
  \( \mu_2^{\mu_2} (1-\mu_2)^{1-\mu_2}\)^{\binom{n+1}{2}}.
\]
We can see from Theorem~\ref{densetheorem} that $G_2(n,d)$ is
larger than  $\widehat G_2(n,d)$ by a factor close to $\sqrt 2\, e^{1/4}$
whenever $\min\{d,n-d\}>cn/\log n$ for some constant $c>\frac23$.
Less obviously, the same is true for $1\le d = o(n^{1/2})$ by
Theorem~\ref{sparsetheorem}.
Recall that the same constant $\sqrt2\,e^{1/4}$ appears in a similar
context for regular graphs without loops~\cite{MWreg}.
This leads us to investigate the region between the coverage of our
sparse and dense theorems.

Using the method described in \cite{Mlabelled}, we computed the exact
values of $G_2(n,d)$ for about 150 nontrivial values of $(n,d)$ up to $n=35$.
For example,
\[ 
G_2(22,10) = 7789744323722189254716829156528211234980743220762340514888.
\]
Numerical analysis of these values suggests the following analogue of
\cite[Conj.~2]{MWreg}.
\begin{conj}
  Let $d=d(n)$ satisfy $1\le d\le n$ with $nd$ even.
  Then
  \[ 
    G_2(n,d) = \sqrt2\, \binom{n+1}{d}^{\!n} 
    \( \mu_2^{\mu_2} (1-\mu_2)^{1-\mu_2}\)^{\binom{n+1}{2}}
    \exp\biggl( -\frac34 + \frac{3c+1}{12cn}+ O(n^{-2}) \biggr)
  \]
  uniformly as $n\to\infty$, where $\mu_2=d/(n+1)$ and
  $c = \mu_2(1-\mu_2)(n+1)$.
\end{conj}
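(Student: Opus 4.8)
The plan is to establish the conjecture by the diagonal-summation method already used for Theorems~\ref{densetheorem} and~\ref{sparsetheorem}, but carried one order further in $n^{-1}$ and, crucially, supplied with a loopless estimate valid \emph{uniformly} over the whole density range. Starting from the identity $G_2(\dvec,\ell)=\sum_{\zvec\in\Lambdait_\ell}G(\dvec-2\zvec)$ of~\eqref{dense-strategy} with $\dvec=(d,\ldots,d)$, each summand counts loopless graphs with $\ell$ vertices of degree $d-2$ and $n-\ell$ of degree~$d$; the degree variation is only $O(1)$, so such a sequence is comfortably within the scope of the conjectural extension of Theorem~\ref{densenum} to general degree sequences throughout the intermediate domain (the conjecture following~\cite[Theorem 2.5]{MWdegseq}). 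Writing $\Phi$ for the right-hand side of that conjectured formula and substituting $\Phi(\dvec-2\zvec)$ for $G(\dvec-2\zvec)$ reduces the whole problem to an explicit, if lengthy, asymptotic evaluation.

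I would then proceed as follows. First, expand $\Phi(\dvec-2\zvec)$ about the regular point exactly as in~\eqref{dense-answer}--\eqref{betabit}, but retaining the subleading terms in powers of $n^{-1}$; since $R=0$ in the regular case the leading exponent is already $Q_2(\dvec,\bar\ell_2)=-\nfrac34$, and the refinement $\nfrac{3c+1}{12cn}$ must be produced by the next-order expansion of the entropy factor $\(\mu_2^{\mu_2}(1-\mu_2)^{1-\mu_2}\)^{\binom{n+1}{2}}$ together with $\binom{n+1}{d}^{n}$. Second, carry out the sum over $\zvec\in\Lambdait_\ell$ by Lemma~\ref{Ul1}, followed by the sum over $\ell$, which is a binomial sum sharply concentrated at $\bar\ell_2=\mu_2 n$; the factor $\sqrt2$ is inherited directly from $\Phi$ (which already carries it), while the curvature of the summand near $\bar\ell_2$ supplies part of the $O(n^{-1})$ term. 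Third, collect the pieces and rewrite them in terms of $c=\mu_2(1-\mu_2)(n+1)$ to reach the stated form. A final, separate step is to check consistency with the two \emph{proved} endpoints: agreement with Theorem~\ref{densetheorem} is immediate, whereas agreement with Corollary~\ref{sparseregular} for $1\le d=o(n^{1/2})$ requires Stirling-expanding $\binom{n+1}{d}^{n}$ and the entropy factor and verifying that the many $d$-dependent contributions (among them $-d^3/(12n)$) re-emerge correctly.

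The main obstacle is the uniform loopless input $\Phi$ itself. Theorem~\ref{densenum} is established only in the dense range and only to relative error $O(n^{-b})$ with $b<\nfrac12$, whereas the conjecture demands both the intermediate densities and an additive precision of $O(n^{-2})$ in the exponent --- far finer than anything presently available, even in the dense range. A self-contained route avoiding this input would evaluate $G_2(n,d)$ directly as the coefficient of $\prod_j x_j^{d}$ in $\prod_{1\le j<k\le n}(1+x_jx_k)\prod_{j=1}^n(1+x_j^2)$ via a saddle-point integral over the torus $x_j=r\,e^{i\theta_j}$ with $r^2=\mu_2/(1-\mu_2)$. The extra loop factor $\prod_j(1+x_j^2)$ is diagonal in the $\theta_j$, so it perturbs only the single-variable part of the local expansion and is routine to incorporate; but the difficulty resurfaces unchanged, since bounding the contribution away from the saddle uniformly in~$d$ across the intermediate range is precisely the open problem underlying~\cite[Conj.~2]{MWreg}, and reaching the $O(n^{-2})$ error demands pushing the local analysis one step beyond the existing McKay--Wormald expansions.
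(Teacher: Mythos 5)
You should first be clear that this statement appears in the paper as a \emph{conjecture}, not a theorem: the paper offers no proof of it. What the paper provides is (a) a heuristic ``na\"\i ve'' independence computation showing that the conjectured formula has the expected shape, (b) the observation that the constant $\sqrt2\,e^{-3/4}$ is consistent with both proved regimes (Theorem~\ref{densetheorem} for dense degrees and Corollary~\ref{sparseregular} for $1\le d=o(n^{1/2})$), and (c) numerical evidence from exact values of $G_2(n,d)$ computed for about 150 pairs $(n,d)$ up to $n=35$ by the method of \cite{Mlabelled}. So there is no proof in the paper against which your argument could be matched.

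Your proposal, as you yourself concede in its final paragraph, does not close the gap that makes this statement a conjecture. The diagonal-summation route through \eqref{dense-strategy} requires an enumeration formula for loopless graphs with degrees $d$ and $d-2$ that is (i) valid uniformly across the intermediate density range (roughly $n^{1/2}\ll d\ll n/\log n$), and (ii) accurate to relative error $\exp\bigl(O(n^{-2})\bigr)$. Neither exists: Theorem~\ref{densenum} is proved only for $\min\{d,n-d-1\}\ge n/(3a\log n)$ and only to relative error $O(n^{-b})$ with $b<\nfrac{1}{2}$, while the extension cited after \cite[Theorem 2.5]{MWdegseq} is itself a conjecture --- and even it is not formulated to the precision you need, so substituting it as your $\Phi$ would at best yield the conjecture with an error term $O(n^{-b})$, not $O(n^{-2})$. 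The same obstruction reappears in your alternative saddle-point route: bounding the integrand away from the saddle uniformly in $d$ across the intermediate range is exactly the open problem behind \cite[Conj.~2]{MWreg}, and the claimed $O(n^{-2})$ error would require carrying the local expansion several orders beyond any existing analysis. Your text is therefore best read not as a proof but as a correct diagnosis of why the statement remains open; on that diagnosis you and the authors agree.
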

The numerical evidence suggests
that in fact the term $O(n^{-2})$ always lies in the
interval $(-2/n^2,0)$ for $n\ge 4$.

\nicebreak

\end{document}